\def\normo#1{\left\|#1\right\|}
\def\abs#1{|#1|}
\def\aabs#1{\left|#1\right|}
\def\brk#1{\left(#1\right)}
\def\fbrk#1{\{#1\}}
\def\mbrk#1{\left[#1\right]}
\def\rev#1{\frac{1}{#1}}
\def\half#1{\frac{#1}{2}}
\def\norm#1{\|#1\|}
\def\jb#1{\langle#1\rangle}
\def\wt#1{\widetilde{#1}}
\def\<{\langle}
\def\>{\rangle}
\def\loe{\leqslant}
\def\goe{\geqslant}
\def\lsm{\lesssim}
\def\gsm{\gtrsim}
\def\diff{\Delta}
\def\pd{\partial}
\def\pdt{\partial_{t}}
\def\pdk{\partial_{k}}
\def\pdj{\partial_{j}} 
\def\dive{\text{div}} 
\def\dx{\text{\ dx}}
\def\ds{\text{\ ds}}
\def\dt{\text{\ dt}}
\def\lsmb{\lesssim_{\beta}}
\def\ep{\varepsilon}
\def\ph{\varphi}
\def\th{\theta}
\def\bbr{\mathbb{R}}
\def\bbz{\mathbb{Z}}
\def\ut{u_{t}}
\def\uo{u_{0}}
\def\ua{u_{1}}
\def\chir{\chi_{R}}
\def\uj{P_j U}
\def\uk{P_k U'}
\def\uja{P_{j_1}U}
\def\ujb{P_{j_2}U''}
\def\SI{S(I)}
\def\ZI{Z(I)}
\def\WSI{\widetilde{S}(I)}
\def\WSN{\widetilde{S}}
\def\half{\frac{1}{2}}
\def\qua{\frac{1}{4}}
\def\epa{\varepsilon_{1}}
\def\epo{\varepsilon_{0}}
\def\ioa{\iota_1}
\def\iob{\iota_2}
\def\hra{H^{1}_{\text{rad}}}
\def\lra{L^{2}_{\text{rad}}}
\def\taa{\tau_{1}}
\def\tab{\tau_{2}}
\def\Ta{T_{0}}
\def\pk{P_{k}}
\def\xj{x_{j}}
\def\xk{x_{k}}
\def\Tres{T_{\text{Res}}}
\renewcommand{\sigma}{\omega}
\newcommand{\R}{{\mathbb R}}
\newcommand{\C}{{\mathbb C}}
\newcommand{\ra}{{\rightarrow}}
\newcommand{\K}{{\mathcal{K}}}
\def\jb#1{\langle#1\rangle}
\def\norm#1{\|#1\|}
\def\normo#1{\left\|#1\right\|}
\def\wt#1{\widetilde{#1}}
\def\aabs#1{\left|#1\right|}
\newcommand{\F}{\mathcal{F}}
\newcommand{\LL}{\mathcal{L}}
\newcommand{\al}{\alpha}
\newcommand{\be}{\beta}
\newcommand{\de}{\delta}
\newcommand{\la}{\lambda}
\newcommand{\ta}{\tau}
\newcommand{\ka}{\kappa}
\newcommand{\De}{\Delta}
\newcommand{\Om}{\Omega}
\newcommand{\I}{\infty}
\newcommand{\EQ}[1]{\begin{align*}\begin{split} #1 \end{split}\end{align*}}
\newcommand{\EQn}[1]{\begin{align}\begin{split} #1 \end{split}\end{align}}
\newcommand{\Del}[1]{}
\numberwithin{equation}{section}
\newtheorem{thm}{Theorem}[section]
\newtheorem{cor}[thm]{Corollary}
\newtheorem{lem}[thm]{Lemma}
\newtheorem{prop}[thm]{Proposition}
\theoremstyle{remark}
\newtheorem{rem}[thm]{Remark}
\begin{document}
\title[Quadratic Klein-Gordon equation]{Scattering for the quadratic Klein-Gordon equations}
\subjclass[2010]{}
\keywords{}

\author{Zihua Guo}
\address{(Z. Guo) School of Mathematical Sciences, Monash University, Melbourne, VIC 3800, Australia}
\email{zihua.guo@monash.edu}
\thanks{}

\author{Jia Shen}
\address{(J. Shen) School of Mathematical Sciencs, Peking University, No 5. Yiheyuan Road, Beijing 100871, P.R.China}
\email{shenjia@pku.edu.cn}
\thanks{}

\begin{abstract}
We study the scattering problems for the quadratic Klein-Gordon equations with radial initial data in the energy space. For 3D, we prove small data scattering, and for 4D, we prove large data scattering with mass below the ground state. 
\end{abstract}

\maketitle

\tableofcontents

\section{Introduction}
In this paper we study the Cauchy problems to the following quadratic Klein-Gordon equation
\EQn{ \label{main}
	\pdt^{2} u - \diff u + u  =& u^{2}, \quad \brk{t,x}\in \R\times \R^d\\
	u(0,x)  =&  \uo,\\
	\pdt u(0,x) =& \ua,
}
where $u(t,x):\bbr\times\bbr^{d} \rightarrow \bbr$, $d=3,4$.  The Klein-Gordon equation with various types of nonlinear terms ($u^2$ replaced by $f(u)$) has been extensively studied in a large amount of literatures, for example, see \cite{kg-focusing} and references therein for the detailed introduction. In particular, the existence of global solutions and study of their asymptotic behaviour are two important topics.

We first review the cases with the power type nonlinearity $f(u)=\la |u|^pu$. There are two special indices for $p$: mass-critical index $p=4/d$ and energy-critical index $p=4/(d-2)$. In view of the current studies,  when $4/d < p \loe 4/(d-2)$ ($p>4/d$ for $d=1,2$), the scattering problems were better understood. For the defocusing case $\la>0$, see \cite{brenner1984space, ginibre1985global, kg-subcritical-1-2, kg-nakanishi-energy-critical, nakanishi2001remarks} and for the focusing case $\la<0$, see \cite{kg-focusing, ibrahim2014threshold}. 
For small data, one can have scattering in critical space $H^{s}$ (see \cite{wang1998existence,wang1999scattering}). When $p\loe 4/d$, there are less results on the scattering problems in energy space. For the mass-critical case $p=4/d$, it was observed in \cite{nakanishi2008transfer} that the scattering results for Klein-Gordon equation can imply the same results for mass critical nonlinear Schr\"{o}dinger equation (NLS). On the other hand, scattering for the 2D cubic Klein-Gordon was established in \cite{killip2012scattering} using the result for NLS in \cite{dodson2015global,dodson2016global}. When $p<4/d$, the scattering results were usually obtained for small data in some weighted Sobolev sapce, for example,  in \cite{strauss1981nonlinear} for $p_{S}(d)<p \loe 4/d$, where  $p_{S}(d)$ is the Strauss exponent satisfying $dp(p+1)=2(p+2)$, and in \cite{hayashi2009scattering} for $p>2/d$. When $0<p\loe 2/d$ if $d\goe2$, or $p=3$ if $d=1$, scattering operator does not exist, see \cite{glassey1973asymptotic, matsumura1976asymptotic, georgiev1996asymptotic, delort2001existence}. 

The quadratic term $u^2$ may be compared with $|u|u$ (namely $p=1$). It is mass-subcritical for 3D and mass-critical for 4D. However, due to the better regularity and algebraic structure of $u^2$, some new methods were developed to study the asymptotic behaviour. Let us mention Klainerman's vector field method \cite{klainerman1985global} and Shatah's normal form method \cite{shatah1985normal}. Both methods showed scattering of global small solutions for \eqref{main} with $d=3$. 
For 2D, the global existence of small solutions and asymptotic behaviour were studied in \cite{ozawa1996global, delort2004global}. Note that the above two models are below the Strauss exponent, i.e. $p=2=p_S(3)$, and $p=2<p_S(2)$. The above results are for small data with sufficient regularity and decay (in weighted Sobolev space). Using the space-time resonance structure and $U^p,V^p$ space, Schottdorf  \cite{schottdorf2012global} showed small data scattering  in energy space for 3D quadratic Klein-Gordon equation. Recently, in \cite{guo2012small-za}, the first author and Nakanishi used a new approach to show the scattering for the 3D Zakharov system with small radial energy data. The idea is to combine the radially improved Strichartz estimates in \cite{guo2014improved} and (partial) normal form method in \cite{shatah1985normal}.  It turns out that this approach can deal with the scattering problems for a class of 3D quadratic dispersive equations and has been further extended. For example, see \cite{guo2012small-kgz, guo2014generalized, guo2018scattering-GP} for non-radial version generalization and applications to other equations. 

The purpose of this paper is to study the asymptotic behaviour for the quadratic Klein-Gordon equation \eqref{main} using this approach. Comparing to the $U^p,V^p$ space methods used in \cite{schottdorf2012global}, we used only Strichartz space that allows perturbation.  This gives us the possibility to study the large data problem as \cite{guo2013global-za, guo2014global-kgz}.  
Our first result is the small data scattering in energy space for the quadratic Klein-Gordon equation (\ref{main}) in 3D and 4D.
\begin{thm} \label{thm-small}
	Let $d=3$ or $d=4$, and $\ka>0$ be a sufficiently small constant. Suppose that $\brk{\uo,\ua}$ is radial, and satisfies
	\begin{equation*}
		\normo{\brk{\uo,\ua}}_{H^{1}\times L^{2}} \ll 1,
	\end{equation*}
	then there exists a unique solution $u(t,x)$ to (\ref{main}) in
	\EQ{
		C\brk{\bbr:H^1} \cap \brk{\half, \frac{3}{10}-\ka, \frac{2}{5}-3\ka|\frac{7}{10}+\ka}_\bbr, \text{\ when\ } d=3,}
	and in
	\EQ{
		C\brk{\bbr:H^1} \cap \brk{\half, \frac{5}{14}-\ka, \frac{3}{7}-4\ka|\frac{11}{14}+\ka}_\bbr, \text{\ when\ } d=4.
	}
Moreover, scattering holds, namely, $\exists\ u_{\pm}(x)\in H^{1}$ such that
\EQ{
		\normo{u-i\<D\>^{-1}\pdt u- e^{it\<D\>} u_{\pm}}_{H^{1}} \rightarrow 0, \quad t \to \pm \infty.
}
\end{thm}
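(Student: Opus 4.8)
The plan is to set up a contraction-mapping / perturbation argument in the mixed norm space appearing in the statement, using the partial normal form transformation to upgrade the quadratic nonlinearity $u^2$ to something effectively cubic. First I would rewrite \eqref{main} in first-order (Klein-Gordon) form by setting $U = u - i\jb{D}^{-1}\pdt u$ so that $i\pdt U - \jb{D} U = \jb{D}^{-1}(u^2)$, with $u = \re U$; the linear propagator is $e^{-it\jb{D}}$ and Duhamel gives $U(t) = e^{-it\jb{D}}U(0) - i\int_0^t e^{-i(t-s)\jb{D}}\jb{D}^{-1}u(s)^2\,ds$. The norm in the theorem is a radial Strichartz-type norm $(\si,s_1,s_2|q)_{\R}$ of the kind used in \cite{guo2012small-za,guo2014improved}, combining the energy norm $L^\infty_t H^1$ with radially-improved Strichartz estimates that gain regularity/integrability beyond the non-radial range. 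The exponents $\tfrac{3}{10}-\ka$ etc.\ are exactly tuned so that (i) the linear flow of $H^1\times L^2$ data lies in this space, and (ii) the space is closed under the Duhamel map applied to the transformed nonlinearity.

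The core step is the normal form / space-time resonance analysis. The nonlinearity $u^2 = (\re U)^2$ decomposes into the interactions $U^2$, $\bar U^2$, and $U\bar U$ (up to constants). For the non-resonant pieces — roughly $UU$ and $\bar U\bar U$, where the phase $\jb{\xi}\pm\jb{\xi-\eta}\pm\jb{\eta}$ is bounded away from zero — I would integrate by parts in time (Shatah's normal form, as in \cite{shatah1985normal,guo2012small-za}): this converts a quadratic-in-time Duhamel term into boundary terms plus a cubic-in-time term. The boundary terms are bilinear expressions bounded directly in $H^1$ by product estimates (here the extra derivative $\jb{D}^{-1}$ from the equation and the gain from the non-resonant denominator are what make this work at the energy regularity). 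The cubic remainder, after substituting the equation for $\pdt U$ once more, is estimated multilinearly in the radial Strichartz norms. The resonant piece $U\bar U$ — where the phase $\jb{\xi}-\jb{\xi-\eta}-\jb{\eta}$ can vanish — cannot be normalized away, but it can be estimated directly: near the resonant set one exploits the radial improvement of the Strichartz estimates in \cite{guo2014improved} to close the bilinear bound $\|\jb{D}^{-1}(u^2)\|_{N} \lesssim \|U\|_S^2$ in a suitable dual space $N$.

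Assembling these, the Duhamel map $\Phi(U) = e^{-it\jb{D}}U(0) + (\text{normal-form boundary terms}) + (\text{cubic and resonant remainders})$ satisfies, on a ball of radius $O(\ka)$ in the mixed space, the estimates $\|\Phi(U)\| \lesssim \ka + \|U\|^2$ and $\|\Phi(U)-\Phi(V)\|\lesssim (\|U\|+\|V\|)\|U-V\|$; for $\ka$ small enough this is a contraction, giving the unique solution in the stated space. Scattering then follows from the convergence of $\int_0^{\pm\infty} e^{is\jb{D}}\jb{D}^{-1}u(s)^2\,ds$ in $H^1$, which is immediate once the nonlinearity is controlled in the relevant space-time norm together with the boundary-term bounds; one sets $u_\pm := U(0) + (\text{that integral}) + (\text{limiting boundary contribution})$ and checks $\|U(t) - e^{-it\jb{D}}u_\pm\|_{H^1}\to 0$.

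The main obstacle I expect is the bilinear estimate for the resonant $U\bar U$ interaction at exactly the energy regularity in 4D, where the quadratic nonlinearity is mass-critical: the scaling is critical, so one has essentially no room, and closing the estimate forces the precise choice of the Strichartz exponents (the $-\ka$, $-3\ka$, $-4\ka$ corrections) and a careful Littlewood-Paley decomposition in which the low-high, high-high, and resonant frequency regimes are handled separately, each one using the radial Strichartz gain in a different way. A secondary difficulty is bookkeeping: verifying that the normal-form boundary terms and the cubic remainder both land in $L^\infty_t H^1$ and in the Strichartz component simultaneously, which requires the product/fractional-Leibniz estimates to be compatible across all the norms in the tuple $(\si,s_1,s_2|q)_{\R}$.
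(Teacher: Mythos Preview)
Your overall strategy---partial normal form plus radially improved Strichartz, closed by contraction/bootstrap---is the paper's strategy. But your identification of which interactions are resonant is wrong, and the proof as written would not close. For Klein--Gordon with mass, the modulation $\Phi_{\iota_1,\iota_2}(\xi,\eta)=-\jb{\xi}+\iota_1\jb{\xi-\eta}+\iota_2\jb{\eta}$ never actually vanishes for \emph{any} sign choice; what one has is the uniform lower bound $|\Phi|\gtrsim \jb{\min(|\xi-\eta|,|\eta|)}^{-1}$, which is bounded below by a constant only when both input frequencies are low. In particular the $(+,+)$ phase for $UU$ is \emph{not} bounded away from zero---it tends to $0$ when both frequencies are large and nearly parallel---so a global normal form on $UU$ would produce an unbounded bilinear symbol. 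Conversely the $(+,-)$ phase for $U\bar U$ enjoys exactly the same lower bound and is no worse than $(+,+)$. The sign-based resonant/non-resonant split you propose is a wave-equation intuition that does not apply here.

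The correct decomposition, and the one the paper uses, is by frequency: fix a large parameter $\beta$, apply the normal form $\Omega$ only on the low--low region $LL=\{j,k\le -\beta+10\}$ (where $|\Phi|\gtrsim_\beta 1$ uniformly in all four sign combinations, so the paper treats $U^2,\,U\bar U,\,\bar U^2$ identically), and leave $(UU')_{HH+HL+LH}$ as the ``resonance term'' to be estimated directly by the radial Strichartz inequalities. The normal form then produces a boundary term carrying a $2^{-\theta\beta}$ gain and a trilinear remainder; the three pieces are the Resonance, Boundary, and Trilinear lemmas, assembled into the perturbed estimate \eqref{perturbed-3d-ineq}/\eqref{perturbed-4d-ineq}. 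Your anticipated difficulties (the critical scaling in 4D, the $\kappa$-shifted exponents, the Littlewood--Paley bookkeeping across $HH$, $HL$, $LL$) are exactly where the work lies---but they occur in the $HH+HL+LH$ resonance piece and in the interplay between the strong norm $S(I)$, a weak norm $\widetilde S(I)$, and an auxiliary $Z(I)$ that interpolates $L^\infty_t L^3_x$ with $L^\infty_t H^1_x$, not in a sign-based $U\bar U$ interaction.
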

\begin{rem}

\quad

(a) The notation $\brk{1/q,1/r, s_{0}|s_{1}}_I$ is the space given in \eqref{eq:space-time}.
	
(b) The scattering part in the above Theorem is not new, but we can obtain stronger results that the solutions belong to a set of perturbed Strichartz spaces, see Proposition \ref{perturbed-3d} and Proposition \ref{perturbed-4d} below. This enable us to study large data scattering.

(c) The radial assumption could be replaced by additional angular regularity by the similar arguments in \cite{guo2014generalized}.
\end{rem}

Now we turn to the large data problem. On one hand, 
the quadratic Klein-Gordon equation \eqref{main} has a conservation of energy
\EQ{
	E(u(t),\ut(t)) = \int_{\bbr^{d}} \half |\pdt u(t,x)|^{2} + \half |\nabla u(t,x)|^{2} + \half |u(t,x)|^{2} -  \frac{1}{3}u(t,x)^{3}\ dx.
}
On the other hand, 
the ground state $Q$, that is the unique radial positive solution to the elliptic equation
\begin{equation*}
	-\diff Q + Q = Q^{2},
\end{equation*}
is a stationary solution to \eqref{main}, which is non-scattering. 
It is well known that $Q$ attains the best constant of Gagliardo-Nirenberg inequality
\begin{equation*}
	\int |f(x)|^{\frac{2\brk{d+2}}{d}} \loe \frac{d+2}{d} \brk{\frac{\normo{f}_{2}}{\normo{Q}_{2}}}^{\frac{4}{d}} \normo{\nabla f}_{2}^{2}.
\end{equation*}
We want to clarify the dichotomy behaviour into blowup and scattering with $Q$ as a threshold. However, for the 3D case, we do not know how to prove scattering at the moment since it is $L^2$-subcritical and we do not have the variational analysis of the virial estimate.  So we only have the result in $4D$. 

\begin{thm}\label{thm-large}
	Let $d=4$ and $\ka>0$ be a sufficiently small constant. Suppose that $\brk{\uo,\ua}$ is radial, and satisfies
	\begin{equation*}
		E(u_{0},u_{1}) < E(Q,0).
	\end{equation*}
	\begin{enumerate} [label=(\alph*)]
		\item 
		If $\normo{u_{0}}_{2} > \normo{Q}_{2}$, the solution to (\ref{main}) blows up in finite time.
		\item
		If $\normo{u_{0}}_{2} < \normo{Q}_{2}$, the solution $u(t,x)$ to (\ref{main}) satisfies
		\EQ{
			u(t,x) \in C\brk{\bbr:H^1} \cap \brk{\half, \frac{5}{14}-\ka, \frac{3}{7}-4\ka|\frac{11}{14}+\ka}_\bbr,
		}
		and 
		\begin{equation*}	\normo{u-i\<D\>^{-1}\pdt u- e^{it\<D\>} u_{\pm}}_{H^{1}} \rightarrow 0,
		\end{equation*}
		when $t \rightarrow \pm \infty$, for some $u_{\pm}(x)\in H^{1}$.
	\end{enumerate}
\end{thm}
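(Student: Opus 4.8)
The plan is to run the concentration--compactness/rigidity scheme of Kenig--Merle inside the radial energy-space and Strichartz-space framework already built for Theorem~\ref{thm-small} (and parallel to the treatment of the Zakharov-type systems in \cite{guo2013global-za,guo2014global-kgz}), with a mass-critical variational analysis of the virial functional as the engine of the rigidity step; part~(a) is then the classical convexity/virial blow-up argument in the complementary region. The first step is to record the ground-state identities for $d=4$ coming from $-\De Q+Q=Q^2$ together with the Nehari and Pohozaev relations, namely $\int Q^3=3\normo{Q}_2^2$, $\normo{\na Q}_2^2=2\normo{Q}_2^2$ and $E(Q,0)=\tfrac12\normo{Q}_2^2$, and the sharp Gagliardo--Nirenberg inequality, which for $d=4$ reads $\int|f|^3\loe\tfrac32\,\tfrac{\normo{f}_2}{\normo{Q}_2}\normo{\na f}_2^2$. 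Combining conservation of $E$ with this inequality and a continuity argument, each of the two regions $\{E(\uo,\ua)<E(Q,0),\ \normo{\uo}_2<\normo{Q}_2\}$ and $\{E(\uo,\ua)<E(Q,0),\ \normo{\uo}_2>\normo{Q}_2\}$ is invariant under the flow. In region~(b) one obtains moreover the uniform bound $\normo{u(t)}_2^2\loe 2E(\uo,\ua)<\normo{Q}_2^2$, hence a uniform $H^1\times L^2$ bound on $(u,\pdt u)$ (so the solution is global), together with coercivity of the virial functional
\EQ{
K(u):=\normo{\na u}_2^2-\tfrac23\!\int u^3\ \goe\ \Big(1-\tfrac{\normo{u}_2}{\normo{Q}_2}\Big)\normo{\na u}_2^2\ \goe\ c\,\normo{\na u}_2^2>0\qquad(u\ne 0),
}
with $c>0$ uniform along the trajectory; symmetrically, in region~(a) the same argument yields $K(u(t))\loe -c<0$ and $\normo{u(t)}_2>\normo{Q}_2$ throughout the maximal interval.

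For part~(a), granted the trapping $K(u(t))\loe -c$, finite-time blow-up follows from a convexity/virial argument of Payne--Sattinger and Levine type, as adapted to Klein--Gordon in the manner of Ibrahim--Masmoudi--Nakanishi: starting from the virial identity $\frac{d}{dt}\!\int(x\cdot\na u+2u)\,\pdt u\,dx=-K(u)$, localized with a cutoff $\chir$ to accommodate merely $H^1$ data (the truncation errors being absorbed by the uniform energy bound once $R$ is large), one derives a differential inequality for a suitable coercive functional of $(u,\pdt u)$ that cannot hold on an infinite interval. Since $u^3\loe|u|^3$ keeps all the variational inequalities one-sided, no change from the $|u|u$ model is needed and one only has to adapt the known argument.

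For part~(b), suppose the perturbed Strichartz and stability theory underlying Theorem~\ref{thm-small} (see Proposition~\ref{perturbed-4d}) does not already bound the relevant Strichartz norm uniformly over all data in region~(b), and let $E_c\loe E(Q,0)$ be the critical threshold; it suffices to rule out $E_c<E(Q,0)$. Applying a linear profile decomposition for the radial Klein--Gordon propagator $e^{it\<D\>}$ in $H^1\times L^2$ (profiles indexed by time translations $t_n^j$ and, in the degenerate frequency regimes, by the associated non-relativistic/NLS-type and wave-type rescalings) together with the nonlinear perturbation theory, one extracts a critical element: a global solution $u_c$ with $E(u_c,\pdt u_c)=E_c$, $\normo{u_c(0)}_2<\normo{Q}_2$ (the variational gap excludes equality $\normo{u_c(0)}_2=\normo{Q}_2$, which would force $E_c\goe E(Q,0)$), infinite Strichartz norm, and precompact trajectory in $H^1\times L^2$. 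Here one has to verify that the degenerate profiles contribute nothing: since \eqref{main} is energy-subcritical (and mass-critical) in $d=4$, the very low-frequency profiles are small in the critical norm of the limiting (cubic, by the nonresonant quadratic structure) NLS, and the very high-frequency ones are small in the critical $L^2$ norm of the limiting quadratic wave, so each scatters by small-data theory (aided by the radially improved Strichartz estimates); consequently the only obstruction is the single almost-periodic $u_c$, which, with no surviving scaling parameter, has genuinely precompact trajectory.

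Finally one eliminates $u_c$. By precompactness, for every $\eta>0$ the mass and energy densities of $u_c$ are supported in $|x|\loe R_0(\eta)$ up to error $\eta$; choosing $R\gg R_0(\eta)$ and integrating the truncated virial $\frac{d}{dt}V_R(t)=-K(u_c(t))+O(\eta)$ over $[0,T]$ against the bound $|V_R(t)|\lesssim_R E_c$, and using $\inf_t\normo{\na u_c(t)}_2>0$ (precompactness together with the lower bound in the display above, since $u_c\ne 0$), one lets $T\to\infty$ to reach a contradiction; hence $E_c\goe E(Q,0)$ and every solution in region~(b) scatters. I expect the rigidity step --- equivalently, the very availability of the coercive virial estimate --- to be the heart of the matter, and the reason the theorem is confined to $d=4$: only in the mass-critical dimension does the hypothesis $\normo{\uo}_2<\normo{Q}_2$ turn $K$, the scaling derivative of the potential energy, into a coercive quantity, whereas for $d=3$ (mass-subcritical) $\int u^3$ is a subcritical quantity with no ground-state threshold and $K$ need not have a sign, so no analogous variational trapping of the virial is available. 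The remaining difficulties are technical: the bookkeeping of the multi-regime Klein--Gordon profile decomposition (and the compatibility of each regime's nonlinear evolution with the perturbation theory), and the estimation of the truncated-virial errors via the compactness of $u_c$.
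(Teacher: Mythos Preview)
Your proposal takes a genuinely different route from the paper. You outline the Kenig--Merle concentration--compactness/rigidity scheme, whereas the paper \emph{explicitly avoids} concentration compactness and instead follows the Dodson--Murphy strategy of \cite{dodson2017new-radial}: a localized Virial/Morawetz estimate (Proposition~\ref{vi-mo}) yields $\int_T^{2T}\int_{|x|\le R}|u|^3\lesssim R+TR^{-3/2}$, which after optimization and a pigeonhole gives arbitrarily long intervals on which $\int\!\!\int|u|^3$ is small (Corollary~\ref{vi-mo cor}). This is then upgraded, via the dispersive estimate and an interpolation argument of Cazenave type, to pointwise-in-time smallness of $\|U(t)\|_{L^3}$ on a late interval (Proposition~\ref{l3-decay}). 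Feeding that smallness into the $Z(I)$-slot of the perturbed Strichartz inequality (Proposition~\ref{perturbed-4d}) closes a bootstrap on $\|U\|_{\WSN(T_2,\infty)}$ directly. For part~(a) the paper does not use a localized $x\cdot\nabla u$ virial either; it identifies $\K^-=\K_{1,0}^-$ and runs the classical Payne--Sattinger convexity argument on $y(t)=\|u(t)\|_2^2$.

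Beyond the difference in strategy, your sketch has a genuine gap in the profile-decomposition step. Your claim that the degenerate low-frequency profiles are ``small in the critical norm of the limiting \dots\ NLS'' and therefore scatter by small-data theory is not correct in a mass-critical setting: it is precisely the non-relativistic (Schr\"odinger) regime that can carry full mass up to $\|Q\|_2$, and in the analogous treatment of the 2D cubic Klein--Gordon in \cite{killip2012scattering} those profiles are handled only by invoking Dodson's large-data mass-critical NLS scattering \cite{dodson2015global,dodson2016global}. For the present equation the situation is worse, because the non-relativistic limit of the real quadratic nonlinearity $u^2$ is not a standard gauge-invariant NLS (your ``cubic, by the nonresonant quadratic structure'' remark does not produce a clean limiting model), and no off-the-shelf large-data result is available to feed in. This is exactly the obstruction the paper sidesteps by abandoning concentration compactness in favor of the direct Virial/Morawetz $+$ $L^3$-decay route; what that route buys is that one never needs to understand concentrating profiles or limiting equations, at the cost of the extra step (specific to Klein--Gordon, where almost-finite-speed mass propagation fails) of converting integrated $L^3$ smallness of $u$ into pointwise $L^3$ smallness of $U=u-i\langle D\rangle^{-1}u_t$.
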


\begin{rem}

\quad

(a) Recently, Dodson and Murphy gave a new proof of the scattering for the focusing $H^{1/2}$-critical NLS in \cite{dodson2017new-radial}, in which they used the Virial/Morawetz estimate (used in \cite{ogawa1991blow} by Ogawa and Tsutsumi) to avoid the concentration compactness argument. We follow their idea to prove the large data scattering. 

(b) There are some difficulties for the Klein-Gordon equation. Virial/Morawetz estimate yields small $L^3$ norm at one large time, while we need small $L^3$ norm on a suitable large time interval. However, on one hand, Klein-Gordon equation does not have almost finite propagation of localised $L^{2}$ norm as NLS. On the other hand, Virial/Morawetz estimate cannot give the decay of the localization of $\normo{\pdt u}_{2}$, so local energy estimate inside the light cone cannot be applied. To overcome this difficulty, we use Cazenave's approach \cite{cazenave2003semilinear} to give a pointwise decay of $\normo{u-i\<D\>^{-1}\pdt u}_{3}$ after large time. 	
\end{rem}

An important similar equation is the following $\phi^{4}$ model, which arises in quantum field theory
\begin{equation} \label{phi4}
\pdt^{2} \phi - \diff \phi = \phi -\phi^{3},
\end{equation}
with the non-vanishing boundary condition $\lim_{|x|\to \infty}|\phi(x)|=1$.
It has the conservation of energy
\EQ{
\tilde{E}(\phi,\phi_{t}) = \int_{\bbr^{d}} \half |\pdt \phi(t,x)|^{2} + \half |\nabla \phi(t,x)|^{2} + \qua \brk{1-|\phi|^{2}}^{2} \ dx.
}
In \cite{kowalczyk2017kink}, Kowalczyk, Martel and Mu{\~n}oz studied the asymptotic stability of some kink solutions in dimension one. Note that under the simpler boundary condition 
\EQ{
\lim_{|x|\to \infty}\phi(t,x) = 1,
}
and let $w(t,x) = \phi(t,x) - 1$, then the equation (\ref{phi4}) can be transformed into Klein-Gordon equation
\begin{equation} \label{phi4-new}
\pdt^{2} w - \diff w + 2w = -3 w^{2} - w^{3}.
\end{equation}
It can be viewed as a quadratic Klein-Gordon equation perturbed with a cubic term. Note that $w^3$ term is $\dot{H}^{1/2}$ critical in 3D and $\dot{H}^{1}$ critical in 4D, so one may control the cubic term with $L_t^3L_x^6$ norm directly. Then, the small data scattering results in Theorem \ref{thm-small} also hold for \eqref{phi4-new}. It seems interesting to study the large data problem.

\medskip

\noindent{\bf Notations}

$\bullet$ $\hat f$ or $\F f$ denotes the Fourier transform of $f$.

$\bullet$ $C>0$ denotes some constant, and $C(a)>0$ denotes some constant depending on coefficient $a$.

$\bullet$ If $f\loe C g$, we write $f\lsm g$. If $f\loe C g$ and $g\loe C f$, we write $f\sim g$. Suppose further that $C=C(a)$ depends on $a$, then we write $f\lsm_a g$ and $f\sim_a g$, respectively.

$\bullet$ For $x\in\bbr^d$, $\jb{x} := \brk{1 + |x|^2}^{1/2}$.

$\bullet$ $D := \F^{-1}|\xi|\F $ and $\jb{D}^s:=\F^{-1}(1+|\xi|^2)^{s/2}\F $.  

$\bullet$ Take a cut-off function $\chi\in C_{0}^{\infty}(0,\infty)$ such that $\chi(r)=1$ if $r\loe1$ and $\chi(r)=0$ if $r>2$. For $k\in\bbz$, let $\chi_k(r) = \chi(2^{-k}r)$ and $\phi_k(r) =\chi_k(r)-\chi_{k-1}(r)$. We define the Littlewood-Paley dyadic operator $P_{\loe k} f := \mathcal{F}^{-1}\brk{ \chi_k(|\xi|) \hat{f}(\xi)}$ and $\pk f := \mathcal{F}^{-1}\brk{ \phi_k(|\xi|) \hat{f}(\xi)}$.

$\bullet$ $L^p(\bbr^d)$ and $H^s(\bbr^d)$ denote the usual Lebesgue and Sobolev space. $L^p_{\text{rad}}$(or $H^s_{\text{rad}}$) denotes the space of radial functions in $L^{p}(\bbr^d)$(or $H^s(\bbr^d)$). 

$\bullet$ $B^{s}_{p,q}$ and $\dot{B}^{s}_{p,q}$ denote the standard inhomogeneous and homogeneous Besov space, that is $\normo{f}_{B^{s}_{p,q}} := \normo{P_{\loe 0} f}_{p} + \brk{\sum_{k\goe 0} 2^{qsk}\normo{\pk f(x)}_{p}^{q}}^{1/q}$, and
$\normo{f}_{\dot{B}^{s}_{p,q}} := \brk{\sum_{k\in \bbz} 2^{qsk}\normo{\pk f(x)}_{p}^{q}}^{1/q}$ and we write $\dot{B}^{s}_{p}=\dot{B}^{s}_{p,2}$.

$\bullet$
We define the following Besov type space
\[
\normo{f}_{\brk{\dot{B}^{s_{0}}_{p}|\dot{B}^{s_{1}}_{p}}} := \brk{\sum_{k\in \bbz, k\loe0} 2^{2s_{0}k}\normo{\pk f(x)}_{p}^{2}}^{\frac{1}{2}} + \brk{\sum_{k\in \bbz, k\goe 0} 2^{2s_{1}k}\normo{\pk f(x)}_{p}^{2}}^{\frac{1}{2}}.
\]

$\bullet$
We define the norms of space-time function space
\EQn{\label{eq:space-time}
\normo{F}_{\brk{\frac{1}{q},\frac{1}{r}, s}_I} &:= \normo{F(t,x)}_{L_{t}^{q}\brk{I:\dot{B}_{r}^{s}}}, \\
\normo{F}_{\brk{\frac{1}{q},\frac{1}{r}, s_{0}|s_{1}}_I} &:= \normo{F(t,x)}_{L_{t}^{q}\brk{I:\brk{\dot{B}^{s_{0}}_{r}|\dot{B}^{s_{1}}_{r}}}}.
}
Sometimes we omit the interval $I$ for abbreviation.

\section{Small energy scattering and perturbed Strichartz estimates}

Before starting our proof, we make some preliminaries. First, we need the radially improved Strichartz estimates.
\begin{lem}[\cite{guo2018scattering-GP}]
	Suppose that $\ph\in L^2$ is radial, $d\goe2$, $2\loe q, r \loe +\infty$, and $\brk{q,r,d} \ne \brk{2, \infty, 2}$.
	If $\brk{q,r}$ satisfies $1/q + \brk{d-1}/r < \brk{d-1}/2$, we have
	\begin{equation}
	\normo{e^{it\<D\>}\pk\ph}_{\brk{\frac{1}{q},\frac{1}{r}, \frac{2}{q}+\frac{d}{r}-\frac{d}{2}| \frac{1}{q}+\frac{d}{r}-\frac{d}{2}}} \lsm \normo{\pk\ph}_{2},
	\end{equation}
	and if $\brk{q,r}$ satisfies
	$1/q + \brk{d-1}/r > \brk{d-1}/2$ and $2/q + \brk{2d-1}/r < \brk{2d-1}/2$,
	\begin{equation}
	\normo{e^{it\<D\>}\pk\ph}_{\brk{\frac{1}{q},\frac{1}{r}, \frac{2}{q}+\frac{d}{r}-\frac{d}{2}| \frac{d}{2} -1 -\frac{1}{q}-\frac{d-2}{r}}} \lsm \norm{\pk\ph}_{2}.
	\end{equation}
\end{lem}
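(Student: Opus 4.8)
The plan is to prove the two estimates by freezing a single Littlewood--Paley block $\pk\ph$, passing to the Bessel representation of the radial Klein--Gordon propagator, extracting the extra spatial decay that the oscillation of the Bessel kernel provides, and then treating the low-frequency regime $k\loe0$ (where $\<\xi\>$ is essentially parabolic) and the high-frequency regime $k\goe0$ (where $\<\xi\>$ is essentially hyperbolic) separately. Concretely, for radial $\ph$ one writes
\[
e^{it\<D\>}\pk\ph(x)=c_d\int_0^\infty e^{it\<\rho\>}\,\phi_k(\rho)\,\widetilde{J}_d(\rho|x|)\,\rho^{d-1}\wh\ph(\rho)\,d\rho,\qquad \widetilde{J}_d(r):=\frac{J_{(d-2)/2}(r)}{r^{(d-2)/2}},
\]
and records the two elementary facts about $\widetilde J_d$: it is bounded for $r\lesssim1$, while for $r\gtrsim1$ it has the oscillatory asymptotics $J_\nu(r)=r^{-1/2}(a_+e^{ir}+a_-e^{-ir})+O(r^{-3/2})$. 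The upshot is that, restricted to radial functions, the kernel of $e^{it\<D\>}\pk$ decays like $(2^k|x|)^{-(d-1)/2}$ in the far region $2^k|x|\gtrsim1$ instead of the flat $2^{kd/2}$; this gain of $|x|^{-(d-1)/2}$ is precisely what upgrades the admissible range and the Besov exponents in the statement beyond their non-radial counterparts.

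For the high-frequency block $k\goe0$ I would write $\<\rho\>=\rho+O(\rho^{-1})$ on $\supp\phi_k$ and expand $e^{it\<\rho\>}=e^{it\rho}\sum_{m\goe0}\tfrac{(it)^m}{m!}(\<\rho\>-\rho)^m$; since $\<\rho\>-\rho=O(2^{-k})$ there, this exhibits $e^{it\<D\>}\pk$ as $e^{it|D|}\pk$ composed with a bounded, rapidly smoothing Fourier multiplier, plus a summable remainder, so the high-frequency estimate reduces to the radial improved Strichartz estimate for the wave propagator $e^{it|D|}$, whose scaling is hyperbolic — this is why the high-frequency Besov exponent is $\tfrac{1}{q}+\tfrac{d}{r}-\tfrac{d}{2}$ (the ``$1/q$''). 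For the low-frequency block $k\loe0$ I would rescale $\rho=2^k\sigma$ with $|\sigma|\sim1$ and expand $\<2^k\sigma\>=1+\tfrac{1}{2}2^{2k}|\sigma|^2+O(2^{4k})$; pulling out the unimodular factor $e^{it}$, the propagator becomes a frequency-one Schr\"odinger-type propagator run for rescaled time $2^{2k}t$, and undoing the rescaling in the $L^q_tL^r_x$ norm produces the factors $2^{(2/q)k}$ and $2^{(d/r)k}$, i.e. the parabolic exponent $s_0=\tfrac{2}{q}+\tfrac{d}{r}-\tfrac{d}{2}$ (the ``$2/q$''). In each regime, once the weighted kernel bound is in hand, the $L^q_tL^r_x$ estimate follows by the standard machinery: decompose physical space into the near ball $|x|\lesssim2^{-k}$ and the dyadic annuli $|x|\sim2^{\ell-k}$, $\ell\goe1$, apply the radial Plancherel identity in $\rho$ together with a one-dimensional van der Corput bound in $t$ on each annulus, sum in $\ell$ using the Bessel gain, and conclude via a $TT^\ast$ argument whose time kernel is fractionally integrable, so that the Hardy--Littlewood--Sobolev inequality in $t$ delivers the $L^q_t$ bound.

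The two alternative estimates in the lemma are the two outcomes of this annular summation. When $1/q+(d-1)/r<(d-1)/2$ the Bessel gain $2^{-\ell(d-1)/2}$ beats the annulus volume growth with room to spare, the sum in $\ell$ converges cleanly, and one lands on the exponent $\tfrac{1}{q}+\tfrac{d}{r}-\tfrac{d}{2}$; when instead $1/q+(d-1)/r>(d-1)/2$ the gain no longer absorbs the volume, one must interpolate against the ``near'' bound, and this costs derivatives and forces the lossier exponent $\tfrac{d}{2}-1-\tfrac{1}{q}-\tfrac{d-2}{r}$ — the auxiliary condition $2/q+(2d-1)/r<(2d-1)/2$ being exactly what keeps the interpolated geometric series summable. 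The main obstacle I anticipate is precisely this bookkeeping across the transition zone $2^k|x|\sim1$: matching the ``near'' and ``far'' bounds so that the summation closes under the sharp $(q,r)$-conditions and reproduces the advertised exponents on both sides of frequency one, and pinning down the borderline behaviour (in particular the reason the triple $(q,r,d)=(2,\infty,2)$ must be excluded). Since the statement is quoted from \cite{guo2018scattering-GP}, in practice I would simply invoke that reference, but the scheme above is the route I would follow to reprove it.
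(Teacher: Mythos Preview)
The paper does not prove this lemma at all: it is stated with the citation \cite{guo2018scattering-GP} in the header and used as a black box, exactly as you yourself anticipate in your final sentence. There is therefore no ``paper's own proof'' to compare against; your sketch --- Bessel representation, wave/Schr\"odinger dichotomy across $k\gtrless0$, annular decomposition with $TT^\ast$ and Hardy--Littlewood--Sobolev --- is in fact the approach of the cited source (and the earlier \cite{guo2014improved}), so simply invoking the reference is the correct move here.
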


We also gather some embedding and interpolation relations for Besov type space:
\begin{lem}
	For any $1\loe q \loe \I$, $1\loe r_2 \loe r_1 \loe \I$ and $s,s_0,s_1\in\bbr$, we have embeddings
	\EQ{
	\brk{\rev q, \rev{r_2}, s + \frac{d}{r_2} - \frac{d}{r_1} } \subset \brk{\rev q, \rev{r_1}, s},
	}
	and
	\EQ{
		\brk{\rev q, \rev{r_2}, s_0 + \frac{d}{r_2} - \frac{d}{r_1}|s_1 + \frac{d}{r_2} - \frac{d}{r_1} } \subset \brk{\rev q, \rev{r_1}, s_0|s_1}.
	}
	For $s_1 \ne s_2$, $s'_1 \ne s'_2$, $0< \th < 1$, and $1\loe q,q_1,q_2,r,r_1,r_2 \loe \I$ with
	\EQ{
	& s = (1-\th)s_1 + \th s_2, \\
	& s' = (1-\th)s'_1 + \th s'_2, \\
	& \rev q = \frac{1-\th}{q_1} + \frac{\th}{q_2}, \\
	}
	and $1/r = (1-\th)/r_1 + \th/r_2$, we have complex interpolation inequality
	\EQ{
	\norm{u}_{\brk{\rev q, \rev r, s}} \lsm \norm{u}_{\brk{\rev{q_1}, \rev{r_1}, s_1}}^{1-\th} \norm{u}_{\brk{\rev{q_2}, \rev{r_2}, s_2}}^{\th},
	}
	and
	\EQ{
		\norm{u}_{\brk{\rev q, \rev r, s|s'}} \lsm \norm{u}_{\brk{\rev{q_1}, \rev{r_1}, s_1|s'_1}}^{1-\th} \norm{u}_{\brk{\rev{q_2}, \rev{r_2}, s_2|s'_2}}^{\th}.
	}
	For $s_1 \goe s_1'$ and $s_2 \loe s_2'$, we also have trivial bound
	\EQ{
	\norm{u}_{\brk{\rev q, \rev r, s_1|s_2}} \lsm \norm{u}_{\brk{\rev q, \rev r, s'_1|s'_2}}.
	}
\end{lem}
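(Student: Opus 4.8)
The plan is to reduce everything to two elementary tools: Bernstein's inequality $\norm{\pk f}_{r_1} \lsm 2^{(d/r_2 - d/r_1)k}\norm{\pk f}_{r_2}$ valid for $r_2 \loe r_1$, and Hölder's inequality, the latter applied both in the frequency parameter $k$ and in the spatial and time variables. No abstract interpolation machinery is needed; this is convenient because the claimed range of exponents includes endpoint cases (e.g. $q=\I$, or $r_1=\I$) for which the hypotheses behind interpolating vector-valued $L^q$ spaces or identifying $[\dot B^{s_1}_{r_1},\dot B^{s_2}_{r_2}]_\th$ would require extra bookkeeping.

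First I would dispatch the two embedding statements. For the first, insert Bernstein into the definition $\norm{f}_{\dot B^s_{r_1}} = \brk{\sum_k 2^{2sk}\norm{\pk f}_{r_1}^2}^{1/2}$: each term is bounded by $2^{2(s + d/r_2 - d/r_1)k}\norm{\pk f}_{r_2}^2$, so summing gives $\norm{f}_{\dot B^{s+d/r_2-d/r_1}_{r_2}}$, and then one takes the $L^q_t$ norm. The second embedding is identical after splitting the sum over $k$ into $\{k\loe0\}$ and $\{k\goe0\}$ and applying the estimate above on each piece with the appropriate exponent $s_0$ or $s_1$.

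For the complex interpolation inequalities I would argue pointwise in $k$ and then in $t$. Fix $t$. Interpolating Lebesgue norms gives $\norm{\pk u}_r \loe \norm{\pk u}_{r_1}^{1-\th}\norm{\pk u}_{r_2}^{\th}$ since $1/r = (1-\th)/r_1 + \th/r_2$, whence, using $s = (1-\th)s_1 + \th s_2$,
\[
2^{2sk}\norm{\pk u}_r^2 \loe \brk{2^{2s_1 k}\norm{\pk u}_{r_1}^2}^{1-\th}\brk{2^{2s_2 k}\norm{\pk u}_{r_2}^2}^{\th}.
\]
Summing over $k$ and applying Hölder with exponents $(1-\th)^{-1}$, $\th^{-1}$ yields $\norm{u(t)}_{\dot B^s_r} \loe \norm{u(t)}_{\dot B^{s_1}_{r_1}}^{1-\th}\norm{u(t)}_{\dot B^{s_2}_{r_2}}^{\th}$; taking $L^q_t$ and Hölder once more in $t$ (using $1/q = (1-\th)/q_1 + \th/q_2$) gives the claim, with no distinctness hypothesis on the $s_i$ actually needed. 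For the $|$-type version I would run the same computation separately on the low- and high-frequency halves, obtaining $A \loe A_1^{1-\th}A_2^{\th}$ and $B\loe B_1^{1-\th}B_2^{\th}$, where $A,B$ denote the two square-sums appearing in the $|$-norm of $u$ and $A_i,B_i$ the corresponding ones for the $i$-th space; then the two-term discrete Hölder inequality $A_1^{1-\th}A_2^{\th} + B_1^{1-\th}B_2^{\th} \loe (A_1+B_1)^{1-\th}(A_2+B_2)^{\th}$ recombines these into the $|$-norm, after which the $L^q_t$ step is as before.

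Finally, the trivial monotonicity bound is immediate from the sign of $k$ in each regime: for $k\loe0$ one has $2^{s_1 k}\loe 2^{s_1' k}$ when $s_1\goe s_1'$, and for $k\goe0$ one has $2^{s_2 k}\loe 2^{s_2' k}$ when $s_2\loe s_2'$, so each of the two square-sums defining $\norm{u}_{(1/q,1/r,s_1|s_2)}$ is dominated termwise by the corresponding one for $\norm{u}_{(1/q,1/r,s'_1|s'_2)}$. Every step here is a one-line application of Bernstein or Hölder, so there is no serious obstacle; the only point that requires a moment's thought is the recombination of the two square-sums in the $|$-type interpolation, handled by the elementary two-term Hölder inequality noted above.
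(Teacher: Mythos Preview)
Your argument is correct and complete: Bernstein handles the embeddings, and iterated H\"older (in $x$, then in the frequency index $k$, then in $t$) handles the interpolation inequalities, with the two-term H\"older recombining the low/high pieces in the $|$-case; the monotonicity bound is immediate from the sign of $k$. The paper states this lemma without proof, treating it as standard, so there is no alternative argument to compare against --- your elementary route is exactly what one would supply if asked to fill in the details.
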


Another important tool used in this paper is the normal form method. We first introduce a general definition of normal form. By the change of variable
\begin{equation*}
	U(t,x) = u(t,x) - i \<D\>^{-1} \ut(t,x),
\end{equation*}
we can transform the original equation into a first order one
\begin{equation} \label{1order}
i\pdt U + \<D\> U = \<D\>^{-1} u^{2} = \frac{1}{4} \<D\>^{-1} \brk{U^{2} + 2U \overline{U} + \overline{U}^{2}},
\end{equation}
then the integral equation is
\EQ{
	U(t,x)= & e^{it\jb{D}} U_{0} - \frac{i}{4} \int_{0}^{t} e^{i(t-s)\jb{D}} \<D\>^{-1} \brk{U^{2} + 2U \overline{U} + \overline{U}^{2}}  \ds. 
}
Let $m(\xi_{1},\xi_{2})$ be some Coifman-Meyer bilinear multiplier. We write $U^+(t,x) = U(t,x)$ and $U^-(t,x) = \overline{U}(t,x)$. For $(\ioa,\iob) \in \fbrk{(+,+), (+,-), (-,+), (-,-)}$, we define the normal form for different nonlinear terms as
\EQ{
	\widehat{\Om_{\ioa,\iob}} (U^{\ioa},U^{\iob}) (t, \xi) = & \int_{\bbr^{d}} \frac{1}{i\Phi(\xi,\eta)} m(\xi-\eta,\eta) \widehat{U^{\ioa}}(t,\xi-\eta) \widehat{U^{\iob}}(t,\eta) \text{d}\eta,
}
where the modulation $\Phi(\xi,\eta) := \Phi_{\ioa,\iob}(\xi,\eta)$ is defined by
\begin{equation*}
	\Phi_{\ioa,\iob}(\xi,\eta) := -\<\xi\> + \ioa \<\xi-\eta\> + \iob \<\eta\>.
\end{equation*}
Thus, the normal form transform adapted to the equation (\ref{1order}) is defined by
\EQ{
\brk{U + i\jb{D}^{-1} \Om(U,U)}(t,x) := U(t,x) + \frac{i}{4} \jb{D}^{-1}  \sum \Om_{\ioa,\iob}(U^{\ioa},U^{\iob}) (t, x),
}
where the summation is over $(\ioa,\iob) \in \fbrk{(+,+), (+,-), (-,+), (-,-)}$.
Note that the normal form is well-defined, if $|\Phi| \ne 0$ for all $\brk{\xi,\eta} \in \bbr^d\times\bbr^d$ satisfying $m(\xi-\eta,\eta)\ne 0$. 
In this paper, we are going to use the normal form with $m(\xi-\eta,\eta)$ satisfying $m(\xi-\eta,\eta) = 0$ unless $\max\fbrk{\aabs{\xi-\eta},\aabs{\eta}} \lsm 2^{-\be}$ for some large constant $\be>0$. For any choice of $(\ioa,\iob)$, the modulation $\Phi$ satisfies
\EQ{
	\aabs{\Phi(\xi,\eta)} = \aabs{\jb{\xi} \pm  \jb{\xi-\eta} \pm \jb{\eta}} \gsm \frac{1}{\jb{\min\fbrk{\abs{\xi-\eta},\abs{\eta}}}} \gsm_\be 1.
}
Therefore, we can only consider nonlinear term $U^{2}$ for simplicity, and the proof of the Strichartz estimates for other kinds of nonlinear term is essentially the same. 

In this section, we focus on the simplified equation $i\pdt U + \jb{D}U = \jb{D}^{-1}(U^2)$. For any functions $f(t,x)$ and $g(t,x)$, we define the normal form as
\EQn{\label{normal-form}
	\widehat{\Om} (f,g) (t, \xi) = & \int_{\bbr^{d}} \frac{1}{i\brk{-\jb{\xi} +  \jb{\xi-\eta} + \jb{\eta}}} m(\xi-\eta,\eta) \widehat{f}(t,\xi-\eta) \widehat{g}(t,\eta) \text{d}\eta.
}
Now we insert the normal form transform into the equation, and get
\EQ{
&\brk{i\pdt  + \jb{D}}\brk{U + i\jb{D}^{-1}\Om(U,U)} \\
= & \jb{D}^{-1}(U^2) + i\Om(U,U) \\
&+ i\Om(i\pdt U,U) + i\Om(U,i\pdt U) \\
= & \jb{D}^{-1}U^2 + i\Om(U,U) + i\jb{D}^{-1}\Om(-\jb{D}U,U) + i\jb{D}^{-1}\Om(U,-\jb{D}U) \\
&+ i\Om\brk{\<D\>^{-1}U^{2},U} + i\Om\brk{U,\<D\>^{-1}U^{2}}.
}
The quadratic term is
\EQ{
& \jb{D}^{-1}U^2 + i\Om(U,U) + i\jb{D}^{-1}\Om(-\jb{D}U,U) + i\jb{D}^{-1}\Om(U,-\jb{D}U) \\
= & \F^{-1}\brk{\jb{\xi}^{-1} \int \widehat{U}(t,\xi-\eta) \widehat{U}(t,\eta) \text{d}\eta}  \\
& + \F^{-1}\brk{ i \int \frac{1}{i\Phi} m(\xi-\eta,\eta) \brk{1 - \frac{\jb{\xi-\eta}}{\jb{\xi}} - \frac{\jb{\eta}}{\jb{\xi}}} \widehat{U}(t,\xi-\eta) \widehat{U}(t,\eta) \text{d}\eta }\\
= & \F^{-1}\brk{\jb{\xi}^{-1} \int \widehat{U}(t,\xi-\eta) \widehat{U}(t,\eta) \text{d}\eta - \jb{\xi}^{-1} \int  m(\xi-\eta,\eta)  \widehat{U}(t,\xi-\eta) \widehat{U}(t,\eta) \text{d}\eta}.
}
In fact, the Coifman-Meyer bilinear operator with multiplier $1-m(\xi-\eta,\eta)$ is the resonance term, namely
\EQ{
\F\brk{\Tres(f,g)}(\xi) := \int \brk{1 - m(\xi-\eta,\eta)}  \widehat{f}(t,\xi-\eta) \widehat{g}(t,\eta) \text{d}\eta.
}
After normal form reduction, we have
\begin{align}  \label{mainequ}
	\begin{split}
		U(t,x)= & K(t)\brk{U_{0} + i\<D\>^{-1}\Om(U,U)\brk{0}}-i\<D\>^{-1}\Om(U,U) \\
		& -i\int_{0}^{t} K(t-s) \<D\>^{-1} \Tres(U,U) \ds \\
		& +\int_{0}^{t} K(t-s) \<D\>^{-1}\brk{\Om\brk{\<D\>^{-1}U^{2},U}} \ds \\
		& +\int_{0}^{t} K(t-s) \<D\>^{-1}\brk{\Om\brk{U,\<D\>^{-1}U^{2}}} \ds.
	\end{split}
\end{align}

Finally, our normal form transform is based on frequency decomposition. Fixed a large parameter $\be>0$, for any two functions $U,U'\in H^1$, we split the decomposition as
\EQ{
U(x) U'(x) = & \sum_{(j,k)\in\bbz^2} \uj\uk \\
= & \sum_{(j,k)\in HH} \uj\uk +\sum_{(j,k)\in HL} \uj\uk + \sum_{(j,k)\in LH} \uj\uk \\
& + \sum_{(j,k)\in LL} \uj\uk,
}
where
\begin{align*}
	& HH:=\{\brk{j,k}\in \bbz^{2}: j,k\goe -\be -10\}, \\
	& HL:=\{\brk{j,k}\in \bbz^{2}: j\goe k+ 5,\ j\goe -\be -10,\ -\be+10\goe k \}, \\
	& LL:=\{\brk{j,k}\in \bbz^{2}: j,k\loe -\be +10\}.
\end{align*}
and $LH := \fbrk{(j,k)\in \bbz^2: (k,j)\in HL}$. In fact, we have
\EQn{
\bbz^2 = & HH \cup \fbrk{\brk{j,k}\in \bbz^{2}: \min\fbrk{j,k}\loe -\be -10} \\
= & HH \cup LL \cup \fbrk{\brk{j,k}\in \bbz^{2}: \min\fbrk{j,k}\loe -\be - 10, \max\fbrk{j,k}\goe -\be + 10},
}
and
\EQn{
\fbrk{\brk{j,k}\in \bbz^{2}: j\goe k, k\loe -\be - 10, j\goe -\be + 10} \subset HL.
}
For any subset $S\subset\bbz^2$, let
\EQ{
	m_S(\xi-\eta,\eta) := \sum_{(j,k)\in S} \phi_j(\xi-\eta)\phi_k(\eta),
}
and define the bilinear frequency cut-off to $S$ by
\EQ{
(UU')_S := \F^{-1} \int m_S(\xi-\eta,\eta) \widehat{U}(\xi-\eta) \widehat{U'}(\eta) \text{d} \eta = \sum_{(j,k)\in S} \uj\uk.
}
In this paper, we will take $m=m_{LL}$, so we can easily check that $m$ is bounded, and satisfies
\EQ{
\aabs{\pd_\xi^{\al} \pd_\eta^\be m(\xi-\eta,\eta)} \loe C(\al,\be) \aabs{\xi}^{-|\al|} \aabs{\eta}^{-|\be|}.
}

\subsection{3D case}

Let $\ka>0$ and $\ep>0$ be some small coefficients. In this section, we take the normal form (\ref{normal-form}) with  $m=m_{LL}$, then the resonance term is
\EQ{
	\Tres(U,U) = (UU')_{HH+LH+HL}.
} 
We also have roughly
\EQ{
	\Om(U,U') \sim \brk{UU'}_{LL}.
}
Let $\SI$ be the strong Strichartz norm
\begin{equation*}
	\SI = \brk{0,\half,0|1}\cap \brk{\half, \frac{3}{10}-\ka, \frac{2}{5}-3\ka|\frac{7}{10} + \ka}.
\end{equation*}
The interpolation space between $L_t^\I L_x^{3}$ and $L_t^\I H_x^{1}$ is defined as follows
\begin{equation*}
	\ZI = \brk{0,0,-\frac{5}{4}| -\frac{3}{4}}.
\end{equation*}
Define weak Strichartz norm $\WSI$ by
\begin{equation*}
\norm{U}_{\WSI} = \norm{P_{\goe 0}U}_{\brk{\half-\ep, \qua + \ep, \qua + 2\ep}} + \norm{P_{\loe 0}U}_{\brk{\ep, \half - 2\ep, -\ep}}  + \norm{U}_{ \brk{\frac{1}{3},\frac{1}{6},-\ep|\ep}}.
\end{equation*}
\begin{lem}[Resonance term]
	For radial $U$ and $U'$, we have
	\begin{equation}
		\normo{\int_{0}^{t} K(t-s) \<D\>^{-1} \brk{UU'}_{HH+ HL + LH} ds}_{\SI} \lsm_{\be} \normo{U}_{\WSI} \normo{U'}_{\WSI}.
	\end{equation}
\end{lem}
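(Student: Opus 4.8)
The plan is to estimate the Duhamel term by combining the radially improved Strichartz estimate (Lemma~2.1) with a careful frequency-localized analysis of the bilinear piece $(UU')_{HH+HL+LH}$. First I would dualize: by the dual Strichartz inequality, $\normo{\int_0^t K(t-s)\<D\>^{-1}(UU')_{HH+HL+LH}\,ds}_{\SI}$ is controlled by the $\str'$-dual norm of $\<D\>^{-1}(UU')_{HH+HL+LH}$, i.e. essentially an $L_t^{q'}\dot B_r^{-s}$-type norm (with the two-weight $s_0|s_1$ structure) of the bilinear output. So the whole lemma reduces to a bilinear estimate of the form $\normo{\<D\>^{-1}(UU')_{HH+HL+LH}}_{(\text{dual space})}\lsm_\be \normo{U}_{\WSN}\normo{U'}_{\WSN}$, with no time integration left to worry about.

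Next I would split the bilinear output according to the three frequency regions $HH$, $HL$, $LH$ and, within each, perform a Littlewood–Paley decomposition of $U$, $U'$, and the output $\pk(UU')$. In the high-high region both inputs have comparable frequency $\gsm 2^{-\be}$ and the output frequency is at most that; here I would put both factors in the high-frequency $H^1$-type piece of $\WSN$ (the $(\half-\ep,\qua+\ep,\qua+2\ep)$ norm), use Bernstein/Hölder in space to handle the product, and sum the dyadic pieces — the $\<D\>^{-1}$ and the $\ep$-room in the exponents give convergence of the sums, with the implicit constant allowed to depend on $\be$ since there is only a fixed (not arbitrarily large) loss from the region. In the $HL$ region (and symmetrically $LH$) one factor is at frequency $\lsm 2^{-\be}$ and the other at frequency $\gsm 2^{-\be}$, comparable to the output; here I would pair the low factor with the low-frequency piece of $\WSN$ — either the $(\ep,\half-2\ep,-\ep)$ norm or the $(\frac13,\frac16,-\ep|\ep)$ norm — and the high factor with the high-frequency piece, again closing by Hölder in $(t,x)$ and summing dyadically. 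Throughout, the key is to check that the Hölder exponents in $t$ and $x$ add up correctly and that the derivative counts (coming from $\<D\>^{-1}$, the Besov weights $s_0,s_1$ on both the target and source spaces, and Bernstein when moving between $\dot B^s_r$ spaces) leave a net positive power of $2^{-k}$ or $2^{-j}$ to sum.

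The main obstacle I expect is the bookkeeping in the $HL/LH$ regions near the transition frequency $2^{-\be}$ and at the low end of the inhomogeneous/homogeneous split: one must verify that the weak norm $\WSN$ — which is itself an interpolated, several-piece norm tuned precisely so that this lemma and its companions (the $\Om$ term, the cubic-correction terms, and the linear estimates feeding into the contraction) all close simultaneously — contains exactly the right Strichartz pairs to absorb the product. In particular the choice of the triple $(\half-\ep,\qua+\ep,\qua+2\ep)$ for high frequencies, the pair $(\ep,\half-2\ep,-\ep)$ for low frequencies, and the extra $(\frac13,\frac16,-\ep|\ep)$ piece must each be admissible for Lemma~2.1 (radial improvement) and must interpolate down, via Lemma~2.2, to the target space $\SI$; I would verify these admissibility inequalities $1/q+(d-1)/r<(d-1)/2$ (or the second, double-cone condition) explicitly for each pair with $d=3$. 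A secondary technical point is handling the time integral $\int_0^t$ versus $\int_\bbr$ — but since we only need estimates on a fixed interval $I$ and the Christ–Kiselev lemma (or simply the retarded Strichartz estimate) applies as long as the relevant exponents are not endpoint, this causes no real difficulty. The radial assumption enters only through Lemma~2.1 and is used uniformly across all pieces.
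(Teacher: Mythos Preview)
Your proposal is correct and follows essentially the same route as the paper: dual Strichartz reduces the estimate to a bilinear bound on $(UU')_{HH+HL+LH}$ in a space of the form $(1/q',1/r',s_0|s_1)$, and this is then established by splitting into $HH$ and $HL$ (with $LH$ symmetric), placing each dyadic factor in the appropriate piece of $\WSI$, and summing. The paper uses exactly the pairings you anticipate---both factors in the $(\tfrac12-\ep,\tfrac14+\ep,\cdot)$ norm for $HH$, and the high factor in $(\tfrac12-\ep,\tfrac14+\ep,\tfrac14+2\ep)$ paired with the low factor in $(\ep,\tfrac12-2\ep,-\ep)$ for $HL$; the $(\tfrac13,\tfrac16,-\ep|\ep)$ piece is not used here but is reserved for the trilinear term. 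One small point: in $HH$ the two input frequencies need not be comparable, only both $\gsm 2^{-\be}$, but this does not affect the argument since the dyadic sums still close with an $\ep$-gain. The paper also records in the same proof the interpolation inequalities $\|P_{\ge 0}U\|_{(\frac12-\ep,\frac14+\ep,\frac14+2\ep)}\lsm\|U\|_{\SI}^{1-2\ep}\|U\|_{\ZI}^{2\ep}$ and the analogous low-frequency bound, which are not strictly needed for the lemma as stated but feed into Proposition~2.6.
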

\begin{proof}
	Note that $\WSI$ can be interpolated by $\SI$ and $\ZI$. By interpolation, we have 
	\begin{align*}
		\normo{P_{\goe 0}U}_{\brk{\half -\ep, \qua+\ep,\qua + 2\ep}}  \lsm & \normo{P_{\goe 0}U}_{\brk{\half-\ep,\brk{1-2\ep}\brk{\frac{3}{10}-\ka}, \qua + 2\ep + 3[\brk{1-2\ep}\brk{\frac{3}{10}-\ka}-\qua -\ep]}} \\
		\lsm & \normo{P_{\goe 0}U}_{\brk{\half-\ep,\brk{1-2\ep}\brk{\frac{3}{10}-\ka}, \frac{9}{20}}} \\
		\lsm & \normo{P_{\goe 0}U}_{\brk{\half-\ep,\brk{1-2\ep}\brk{\frac{3}{10}-\ka}, \brk{1-2\ep}\brk{\frac{7}{10}+ \ka} - 2\ep \brk{-\frac{3}{4}}}} \\
		\lsm & \normo{P_{\goe 0}U}_{\SI}^{1-2\ep} \normo{P_{\goe 0}U}_{\ZI}^{2\ep},
	\end{align*}
	and
	\EQ{
	    \norm{P_{\loe 0}U}_{\brk{\ep, \half - 2\ep, -\ep}} \lsm & \norm{P_{\loe 0}U}_{\brk{\ep, (1-2\ep)\brk{\half -\ep} + 2\ep\brk{\frac{3}{10}-\ka}, -\ep + 3\mbrk{(1-2\ep)\brk{\half -\ep} + 2\ep\brk{\frac{3}{10}-\ka} - \half + 2\ep}}} \\
	    \sim & \norm{P_{\loe 0}U}_{\brk{\ep, (1-2\ep)\brk{\half -\ep} + 2\ep\brk{\frac{3}{10}-\ka}, \frac{4}{5}\ep + 6\ep^2 -6\ka\ep}} \\
	    \lsm & \norm{P_{\loe 0}U}_{\brk{0, \half -\ep, \frac{6\ep^2}{1-2\ep}}}^{1-2\ep} \norm{P_{\loe 0}U}_{\brk{\half, \frac{3}{10}-\ka, \frac{2}{5}-3\ka}}^{2\ep} \\
	    \lsm & \brk{\norm{P_{\loe 0}U}_{\brk{0, \half, 0}}^{1-2\ep} \norm{P_{\loe 0}U}_{\brk{0, 0, -\frac{5}{4}}}^{2\ep} }^{1-2\ep} \norm{P_{\loe 0}U}_{\brk{\half, \frac{3}{10}-\ka, \frac{2}{5}-3\ka}}^{2\ep} \\
	    \lsm & \normo{P_{\loe 0}U}_{\SI}^{1-2\ep(1-2\ep)} \normo{P_{\loe 0}U}_{\ZI}^{2\ep(1-2\ep)}.
	}
	
	For $\brk{j,k}\in HH$, by boundedness of Littlewood-Paley operator and H\"older inequality,
	\begin{align*}
		\normo{\uj\uk}_{\brk{1-2\ep,\half+2\ep, 2\ep|4\ep}}  \lsmb & \norm{2^{2\ep l +2\ep l^+} \norm{P_l\brk{\uj\uk}}_{\brk{1-2\ep,\half+2\ep, 2\ep|4\ep}}}_{l_l^2} \\
		\lsm_\be & 2^{5\ep\max\fbrk{j,k}} \norm{\uj\uk}_{L_t^{1/(1-2\ep)}L_x^{1/(1/2 + 2\ep)}} \\
		\lsm_{\be} & 2^{-\ep\brk{j+k}} \normo{\uj}_{\brk{\half-\ep, \qua + \ep, 6\ep}} \normo{\uk}_{\brk{\half-\ep, \qua + \ep, 6\ep}}.
	\end{align*}
	Next, we sum over $j$ and $k$:
	\begin{align*}
	\normo{\int_{0}^{t} K(t-s) \<D\>^{-1} \brk{UU'}_{HH} ds}_{\SI}   \lsm_{\be}&   \sum_{\brk{j,k}\in HH} \normo{\uj\uk}_{\brk{1-2\ep,\half+2\ep, 2\ep|4\ep}} \\
	 \lsmb & \sum_{j\goe-\be-10} 2^{-\ep j} \normo{\uj}_{\brk{\half-\ep, \qua + \ep, 6\ep}} \\
	& \times \sum_{k\goe-\be-10} 2^{-\ep k} \normo{\uk}_{\brk{\half-\ep, \qua + \ep, 6\ep}} \\
	\lsmb & \normo{P_{\goe 0}U}_{\brk{\half-\ep, \qua + \ep, 6\ep}} \normo{P_{\goe 0}U'}_{\brk{\half-\ep, \qua + \ep, 6\ep}}.
	\end{align*}
	
	For $\brk{j,k}\in HL$, 
	\EQ{
	\normo{\int_{0}^{t} K(t-s) \<D\>^{-1} \brk{UU'}_{HL} ds}_{\SI}   \lsm_{\be}&   \sum_{\brk{j,k}\in HL} \normo{\uj\uk}_{\brk{\half,\frac{3}{4} - \ep, \qua + \ep}} \\
	\lsm_{\be}&   \sum_{\brk{j,k}\in HL} 2^{\brk{\qua +\ep} j} \normo{\uj}_{\brk{\half -\ep,\frac{1}{4} + \ep, 0}} \\
	& \times \normo{\uk}_{\brk{\ep,\frac{1}{2} - 2 \ep, 0}} \\
	\lsm_{\be}&   \sum_{j\goe -\be -10} 2^{- \ep j} \normo{\uj}_{\brk{\half -\ep,\frac{1}{4} + \ep, \qua + 2\ep}} \\
	& \times \sum_{k\loe -\be + 10} 2^{\ep k} \normo{\uk}_{\brk{\ep,\frac{1}{2} - 2 \ep, -\ep}} \\
	\lsm_{\be}& \normo{P_{\goe 0}U}_{\brk{\half -\ep,\frac{1}{4} + \ep, \qua + 2\ep}} \normo{P_{\loe 0}U'}_{\brk{\ep,\frac{1}{2} - 2 \ep, -\ep}}.
	}
	By symmetry, we can also bound $LH$ term. Later, we will omit the details on the summation over $j$ and $k$.
\end{proof}
\begin{lem}[Boundary term]
	For radial $U$ and $U'$, there exists $\th>0$, such that
	\begin{equation}
		\normo{\<D\>^{-1} \Om(U,U')}_{\SI} \lsm 2^{-\th\be} \normo{U}_{\SI}^{1-2\ep}\normo{U}_{\ZI}^{2\ep} \normo{U'}_{\SI}^{1-2\ep}\normo{U'}_{\ZI}^{2\ep}.
	\end{equation}
\end{lem}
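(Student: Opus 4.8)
The plan is to run the argument of the previous (Resonance term) lemma, but on the region $LL$ rather than on $HH\cup HL\cup LH$. Since on $LL$ every dyadic frequency is $\loe -\be+10$, all the dyadic sums that occurred there turn into geometric series summing to a positive power of $2^{-\be}$, and this is exactly where the prefactor $2^{-\th\be}$ comes from.

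First I would reduce the left-hand side. Because $m=m_{LL}$ forces both inputs of $\Om$ to have frequency $\loe 2^{-\be+10}$, the function $\<D\>^{-1}\Om(U,U')$ is frequency localised in $\fbrk{\abs{\xi}\lsm 2^{-\be}}$. Hence in both summands defining $\SI$ only the low-frequency index $s_0$ is active, and it suffices to bound $\<D\>^{-1}\Om(U,U')$ in $\brk{0,\half,0}$ and in $\brk{\half,\frac{3}{10}-\ka,\frac{2}{5}-3\ka}$ separately.

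Next I would decompose $\Om(U,U')=\sum_{(j,k)\in LL}\Om(P_jU,P_kU')$. On the support of $m_{LL}$ one has $\abs{\Phi}\gsm 1$ and $\frac1{i\Phi}m_{LL}$ is a Coifman--Meyer symbol, so each $\<D\>^{-1}\Om(P_jU,P_kU')$ is a bilinear operator bounded uniformly in $(j,k)$, with output frequency $\lsm 2^{\max\fbrk{j,k}}$; for the estimates below it behaves like $\<D\>^{-1}(P_jU\cdot P_kU')$. I would then estimate each such piece in the two target norms by H\"older in $(t,x)$ and Bernstein, exactly as in the $HH$ and $HL$ computations above: distribute the two factors between the $L_t^\I L_x^2$ component and (when $I=\bbr$ and the target is the $L_t^2$-norm) the $L_t^2$ component of $\SI$, and let Bernstein together with the output Besov weight --- which, at frequencies $\loe 2^{-\be}$, is itself small for the $\brk{\half,\frac{3}{10}-\ka,\frac{2}{5}-3\ka}$ norm --- supply positive powers of $2^{j}$ and $2^{k}$. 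Finally I would bound the resulting $\brk{\frac1{q_i},\frac1{r_i},s_i}$-norms of $P_jU$ and $P_kU'$ by $\normo{P_jU}_{\SI}^{1-2\ep}\normo{P_jU}_{\ZI}^{2\ep}$ and likewise for $U'$ via the Besov interpolation lemma; here the $\ZI$-ingredient, whose low-frequency Sobolev index is $-\tfrac54$, is precisely what is needed to turn the positive index $\tfrac25-3\ka$ of the $L_t^2$-component of $\SI$ into a decaying power at low frequency. The net outcome is, for each $(j,k)\in LL$,
\EQ{
& \normo{\<D\>^{-1}\Om(P_jU,P_kU')}_{\brk{0,\half,0}} + \normo{\<D\>^{-1}\Om(P_jU,P_kU')}_{\brk{\half,\frac{3}{10}-\ka,\frac{2}{5}-3\ka}} \\
& \qquad\lsm\ 2^{\mu j+\nu k}\, a_j\, b_k,
}
with fixed $\mu,\nu>0$ and nonnegative sequences obeying $\sum_j a_j^2\lsm \normo{U}_{\SI}^{2(1-2\ep)}\normo{U}_{\ZI}^{4\ep}$ and $\sum_k b_k^2\lsm \normo{U'}_{\SI}^{2(1-2\ep)}\normo{U'}_{\ZI}^{4\ep}$. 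To finish, by almost-orthogonality of the outputs in the output frequency one reduces to an $\ell^2$-sum over that frequency, after which Cauchy--Schwarz gives
\EQ{
\sum_{j\loe-\be+10}2^{\mu j}a_j\ \sum_{k\loe-\be+10}2^{\nu k}b_k \lsm 2^{-(\mu+\nu)\be}\brk{\sum_j a_j^2}^{\half}\brk{\sum_k b_k^2}^{\half},
}
which yields the claim with $\th=\mu+\nu$.

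The main obstacle is the exponent bookkeeping hidden in the middle step: the Strichartz and Bernstein parameters must be chosen so that \emph{simultaneously} the bilinear H\"older estimate closes with a finite time exponent over $\bbr$ (which makes the $L_t^2$-component of $\SI$, whose low-frequency index $\tfrac25-3\ka$ is positive and hence a priori unfavourable toward low frequencies, unavoidable) and the net low-frequency powers $\mu,\nu$ come out strictly positive. The only available slack is the interpolation against $\ZI$, and verifying that a small but fixed $\ep$ suffices --- i.e.\ that $(1-2\ep)(\tfrac25-3\ka)$ is beaten by the combined Bernstein gain and output weight minus the $\tfrac52\ep$ contributed by $\ZI$ --- is the crux of the computation; everything else is a routine repetition of the Resonance term estimates on the $LL$ block.
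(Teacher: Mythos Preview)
Your overall plan---localise to $LL$, apply the Coifman--Meyer bound, H\"older, Bernstein, interpolate against $\ZI$, and sum the resulting geometric series to produce $2^{-\th\be}$---is correct and is also what the paper does. The gap is in your treatment of the $\brk{\half,\frac{3}{10}-\ka,\frac{2}{5}-3\ka}$-component of $\SI$. You propose to place one factor in the $L_t^\I L_x^2$ piece of $\SI$ and the other in its $L_t^2$ piece; but the factor sitting at time index $\half$ then \emph{cannot} be interpolated against $\ZI$, which lives entirely at time index $0$. Any convex combination of time indices drawn from $\SI$ (at most $\half$) and $\ZI$ (equal to $0$) with strictly positive weight on $\ZI$ produces a time index strictly below $\half$, so the claimed bound $\normo{P_jU}_{(1/q_j,1/r_j,s_j)}\lsm\normo{P_jU}_{\SI}^{1-2\ep}\normo{P_jU}_{\ZI}^{2\ep}$ fails for that factor. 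Consequently the sequences $a_j,b_k$ cannot both carry the interpolated structure you assert, and the stated symmetric right-hand side is out of reach with this split; at best you would get an asymmetric bound such as $2^{-\th\be}\normo{U}_{\SI}\normo{U'}_{\SI}^{1-2\ep}\normo{U'}_{\ZI}^{2\ep}$ plus its symmetric counterpart.

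The paper's remedy is to split the $L_t^2$ target \emph{symmetrically} as $L_t^4\times L_t^4$: each factor lands in $\brk{\qua,\half\brk{\frac{3}{10}-\ka},\cdot}$, and the time index $\qua$ is then obtained by a \emph{three-way} interpolation with weights $\half$ on $\brk{\half,\frac{3}{10}-\ka,\frac{2}{5}-3\ka}$, $\half-2\ep$ on $\brk{0,\half,0}$, and $2\ep$ on $\ZI=\brk{0,0,-\frac{5}{4}}$. That is exactly what allows both factors to carry the $\SI^{1-2\ep}\ZI^{2\ep}$ structure simultaneously and yields the lemma as stated. Your handling of the $\brk{0,\half,0}$-target, where both factors are $L_t^\I$ and hence interpolation with $\ZI$ is unproblematic, is fine and coincides with the paper's argument.
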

\begin{proof}
	First, we estimate $\normo{\Om(U,U')}_{\brk{0,1/2,0}}$. For $\brk{j,k}$ such that $j\loe -\beta +10$ and $k\loe -\be +10$, by Coifman-Meyer type bilinear multiplier estimate (see Lemma 3.5 in \cite{guo2012small-za}), we have
	\begin{align*}
		\normo{\Om(\uj,\uk)}_{\brk{0,\half,0}} \lsm & \normo{\uj}_{\brk{0,\qua,0}} \normo{\uk}_{\brk{0,\qua,0}} \\
		\lsm & \normo{\uj}_{\brk{0,\half-\ep,\frac{3}{4}-3\ep}} \normo{\uk}_{\brk{0,\half-\ep,\frac{3}{4}-3\ep}} \\
		\lsm & 2^{\half\brk{j+k}} 2^{-\frac{1}{8}\be} \normo{\uj}_{\brk{0,\half-\ep, -2\ep\frac{5}{4}}} \normo{\uk}_{\brk{0,\half-\ep, -2\ep\frac{5}{4}}}.
	\end{align*}
	
	As for the other norm in $\SI$, for $j\loe -\be +10$, we have interpolation
	\begin{align*}
		\normo{\uj}_{\brk{\qua,\half\brk{\frac{3}{10}-\ka}, -\frac{1}{2}}} \lsm & \normo{\uj}_{\brk{\qua,\half\brk{\frac{3}{10}-\ka} + \brk{\half-2\ep}\half, \half\brk{\frac{2}{5}-3\ka}-2\ep\frac{5}{4}}} \\
		\lsm & \normo{\uj}_{\brk{\half, \frac{3}{10}-\ka, \frac{2}{5}-3\ka}}^{\half} \normo{\uj}_{\brk{0,\half,0|1}}^{\half-2\ep} \normo{\uk}_{\brk{0,0,-\frac{5}{4}}}^{2\ep}.
	\end{align*}
	Therefore, for $\brk{j,k}\in LL$,
	\begin{align*}
		\normo{\Om(\uj,\uk)}_{\brk{\half, \frac{3}{10}-\ka, \frac{2}{5}-3\ka|\frac{7}{10} + \ka}} \lsm & \normo{\uj}_{\brk{\qua,\half\brk{\frac{3}{10}-\ka},0}} \normo{\uk}_{\brk{\qua,\half\brk{\frac{3}{10}-\ka},0}} \\
		\lsm & 2^{\qua\brk{j+k}} 2^{-\half\be} \normo{\uj}_{\brk{\qua,\half\brk{\frac{3}{10}-\ka},-\half}} \\
		& \times \normo{\uk}_{\brk{\qua,\half\brk{\frac{3}{10}-\ka},-\half}},
	\end{align*}
	then the lemma follows.
\end{proof}

\begin{lem}[Trilinear term]
	For radial $U$, $U'$ and $U''$, we have
	\begin{equation}
	\normo{\int_{0}^{t} K(t-s) \<D\>^{-1} \Om\brk{\<D\>^{-1}\brk{UU''},U'} ds}_{\SI} \lsm_{\be} \normo{U}_{\WSI} \normo{U''}_{\WSI} \normo{U'}_{\WSI}.
	\end{equation}
\end{lem}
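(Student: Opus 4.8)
The plan is to exploit that the outer normal form $\Om$ carries the multiplier $m=m_{LL}$, supported where both frequencies are $\lesssim 2^{-\be}$. Hence $\<D\>^{-1}\Om(\<D\>^{-1}(UU''),U')$ is localised to spatial frequency $\lesssim 2^{-\be}$; only $P_{\loe-\be+O(1)}(UU'')$ and $P_{\loe-\be+O(1)}U'$ enter it, and every $\<D\>^{-1}$ and Bernstein inequality below is therefore harmless (indeed smoothing). I would proceed in three steps: remove the Duhamel operator, peel off the $U'$ factor through the Coifman--Meyer bound for $\Om$, and estimate the remaining inner quadratic term $\<D\>^{-1}P_{\loe-\be+O(1)}(UU'')$ in $L^{3/2}_tL^3_x$, much as in the resonance-term estimate.

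For the first step, since the integrand is frequency-localised to $\{|\xi|\lesssim 2^{-\be}\}$, Minkowski's inequality followed by Lemma 2.1 applied blockwise --- together with the $\ell^2$-orthogonality built into the Besov norms defining $\SI$, unitarity of $K(t)$ on $L^2$, and boundedness of $\<D\>^{-1}$ on $L^2$ --- gives $\normo{\int_0^t K(t-s)\<D\>^{-1}G\, ds}_{\SI}\lsm\normo{G}_{L^1_tL^2_x}$ for any such $G$. So it suffices to bound $\normo{\<D\>^{-1}\Om(\<D\>^{-1}(UU''),U')}_{L^1_tL^2_x}$. For the second step, set $g:=\<D\>^{-1}P_{\loe-\be+O(1)}(UU'')$. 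Since $\Om$ is a Coifman--Meyer bilinear operator whose symbol is supported at frequency $\lesssim 2^{-\be}$ in each variable (Lemma 3.5 in \cite{guo2012small-za}) and $\<D\>^{-1}$ is bounded on every $L^p$, Hölder's inequality with the splittings $1=\frac{2}{3}+\frac{1}{3}$ in time and $\frac{1}{2}=\frac{1}{3}+\frac{1}{6}$ in space yields
\[
\normo{\<D\>^{-1}\Om(g,U')}_{L^1_tL^2_x}\lsm\normo{g}_{L^{3/2}_tL^3_x}\normo{P_{\loe-\be+O(1)}U'}_{L^3_tL^6_x}\lsm\normo{g}_{L^{3/2}_tL^3_x}\normo{U'}_{\WSI},
\]
the last step being the $\brk{\frac{1}{3},\frac{1}{6},-\ep|\ep}$ component of $\WSI$ (which even gains a factor $2^{-\ep\be}$ from the low-frequency cut-off).

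The remaining step is the bilinear estimate $\normo{P_{\loe-\be+O(1)}(UU'')}_{L^{3/2}_tL^3_x}\lsm\normo{U}_{\WSI}\normo{U''}_{\WSI}$ (the $\<D\>^{-1}$ is again harmless on $L^3$). I would decompose $UU''=\sum_{(j_1,j_2)}\uja\ujb$ and use that, the output frequency being $\lesssim 2^{-\be}$, a block $P_{\loe-\be+O(1)}(\uja\ujb)$ survives only when $\max\{j_1,j_2\}\loe-\be+O(1)$ (low--low) or $|j_1-j_2|\loe O(1)$ (high--high to low). For each surviving pair, discard the bounded projection and estimate by Hölder, $\normo{\uja\ujb}_{L^{3/2}_tL^3_x}\lsm\normo{\uja}_{L^3_tL^6_x}\normo{\ujb}_{L^3_tL^6_x}$, bounding each factor via the $\brk{\frac{1}{3},\frac{1}{6},-\ep|\ep}$ component of $\WSI$, which gives $\normo{\pk U}_{L^3_tL^6_x}\lsm 2^{-\ep|k|}\normo{U}_{\WSI}$. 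Both admissible families then yield convergent geometric series --- the near-diagonal sum $\sum_{|j_1-j_2|\loe O(1)}2^{-\ep(|j_1|+|j_2|)}$ and the low--low sum $\sum_{j_1,j_2\loe-\be}2^{\ep(j_1+j_2)}$ --- which proves the claim. The trilinear terms with $\<D\>^{-1}(UU'')$ replaced by $\<D\>^{-1}(U\overline{U})$, $\<D\>^{-1}(\overline{U}^{2})$, etc., are handled identically, using that $m_{LL}$ and $1/\Phi$ are symmetric in their two frequency variables.

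I expect the main obstacle to be the bookkeeping in the last step for the high--high-to-low contribution, where $\uja$ and $\ujb$ carry arbitrarily high frequencies $2^{j_1}\sim 2^{j_2}$ that nearly cancel in the low-frequency output: one must settle for the crude bound that ignores this cancellation, verify that the weights $2^{-\ep|j|}$ coming from $\WSI$ still make the $j$-summation converge, and check that the space-time Lebesgue exponents at every stage (the $L^3_tL^6_x$ factors, the $L^{3/2}_tL^3_x$ product, the $L^1_tL^2_x$ norm after $\Om$) are simultaneously compatible with the Coifman--Meyer bound for $\Om$ and with the inhomogeneous Strichartz estimate of the first step. A subsidiary point worth flagging is that $\WSI$ controls all of these $L^3_tL^6_x$-type norms precisely through its third component $\brk{\frac{1}{3},\frac{1}{6},-\ep|\ep}$, so that no $L^\infty_t$ endpoint --- which $\WSI$ does not contain --- is ever needed; this is exactly what makes the trilinear term close with $\WSI^3$ on the right-hand side.
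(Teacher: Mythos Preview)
Your proposal is correct and follows essentially the same route as the paper: reduce to $L^1_tL^2_x$ via Strichartz, apply the Coifman--Meyer bound for $\Om$ with the H\"older split $(L^{3/2}_tL^3_x)\times(L^3_tL^6_x)$, and close the inner bilinear piece using the $\brk{\tfrac{1}{3},\tfrac{1}{6},-\ep|\ep}$ component of $\WSI$. The only cosmetic difference is that the paper sums $\normo{P_{j_1}U\,P_{j_2}U''}_{L^{3/2}_tL^3_x}$ over all $(j_1,j_2)\in\Z^2$ without invoking the low-output restriction you impose, since the $2^{-\ep|j|}$ weights already make the unrestricted double sum converge.
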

\begin{proof}
	By interpolation, for $j\in \bbz$, we have
	\begin{align*}
		\normo{\uj}_{\brk{\frac{1}{3},\frac{1}{6},-\ep|\ep}} \lsm & \normo{\uj}_{\brk{\frac{1}{3}, \frac{2}{3}\brk{\frac{3}{10}-\ka}+\brk{\frac{1}{3}-2\ep}\half, \frac{8}{15}|\frac{11}{15}}} \\
		\lsm & \normo{\uj}_{\brk{\frac{1}{3}, \frac{2}{3}\brk{\frac{3}{10}-\ka}+\brk{\frac{1}{3}-2\ep}\half, \frac{2}{3}\brk{\frac{2}{5}-3\ka} - 2\ep\frac{5}{4}|\frac{2}{3}\brk{\frac{7}{10}+\ka} + \brk{\frac{1}{3}-2\ep} - 2\ep \frac{3}{4}}} \\
		\lsm & \normo{\uj}_{\brk{\half, \frac{3}{10}-\ka, \frac{2}{5}-3\ka|\frac{7}{10} + \ka}}^{\frac{2}{3}} \normo{\uj}_{\brk{0,\half,0|1}}^{\frac{1}{3}-2\ep} \normo{\uj}_{\brk{0,0,-\frac{5}{4}| -\frac{3}{4}}}^{2\ep}.
	\end{align*}
	By boundedness of Coifman-Meyer operator, we have
	\begin{align*}
		\normo{\Om\brk{\<D\>^{-1}\brk{UU''},U'}}_{L^{1}_{t}L^{2}_{x}} \lsm & \normo{UU''}_{L_t^{3/2}L_x^3}\normo{\normo{\uk}_{\brk{\frac{1}{3},\frac{1}{6},0}}}_{l_{k}^{1}} \\
		\lsm & \normo{UU''}_{L_t^{3/2}L_x^3}\normo{U'}_{\brk{\frac{1}{3},\frac{1}{6},-\ep|\ep}}.
	\end{align*}
	For all $j_{1}$ and $j_{2}$ in $\bbz$, we have
	\begin{align*}
		\normo{\uja\ujb}_{L_t^{3/2}L_x^3} \lsm & 2^{\ep j_{1} -2\ep j_{1}^{+}} \normo{\uja}_{\brk{\frac{1}{3},\frac{1}{6},-\ep|\ep}} \\
		& \times 2^{\ep j_{2} -2\ep j_{2}^{+}} \normo{\ujb}_{\brk{\frac{1}{3},\frac{1}{6},-\ep|\ep}}.
	\end{align*}
	Thus, the lemma follows, noting that $\WSI \subset \brk{\frac{1}{3},\frac{1}{6},-\ep|\ep}$.
\end{proof}
Combining all the above estimates, we obtain a perturbed Strichartz estimate in 3D case:
\begin{prop} \label{perturbed-3d}
	Let $d=3$, $\ep>0$ and $\ka>0$ are small constants. Assume that $U$ is a solution of (\ref{mainequ}) with initial data $U_{0} \in \hra$ , then there exists $\th>0$ such that
	\begin{equation} \label{perturbed-3d-ineq}
		\normo{U}_{\SI} \lsm \normo{U_{0}}_{H^{1}} + 2^{-\th\be} \normo{U}_{\SI}^{2\brk{1-2\ep}} \normo{U}_{\ZI}^{2\ep} + \normo{U}_{\WSI}^{2} + \normo{U}_{\WSI}^{3}.
	\end{equation}
	where the weak norm satisfies interpolation
	\EQ{
	\normo{U}_{\WSI} \lsm \norm{U}_{S(I)}^{1-2\ep + 4\ep^2} \norm{U}_{Z(I)}^{2\ep(1-2\ep)}.
	}
	Furthermore, we have small data scattering for (\ref{main}) in 3D case.
\end{prop}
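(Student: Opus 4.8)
The plan is to run a standard contraction/bootstrap argument on the normal-form integral equation \eqref{mainequ}, using the three preceding lemmas as the bilinear and trilinear building blocks. First I would set up the a priori estimate: apply $\norm{\cdot}_{\SI}$ to each of the five terms on the right-hand side of \eqref{mainequ}. The homogeneous term $K(t)(U_0 + i\jb{D}^{-1}\Om(U,U)(0))$ is controlled by the Strichartz estimate of the first lemma in this section (the radially improved one) plus the Boundary term lemma applied at $t=0$, giving $\lsm \norm{U_0}_{H^1} + 2^{-\th\be}\norm{U}_{\SI}^{2(1-2\ep)}\norm{U}_{\ZI}^{2\ep}$ after interpolating $\WSN\supset$ the relevant spaces; the free evolution piece is where one checks that the Strichartz exponents defining $\SI$ are admissible for the radial estimate. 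The pure normal-form term $-i\jb{D}^{-1}\Om(U,U)$ is handled directly by the Boundary term lemma. The Duhamel term with $\Tres(U,U)$ is exactly the Resonance term lemma, contributing $\norm{U}_{\WSN}^2$. The two trilinear Duhamel terms are each bounded by the Trilinear term lemma, contributing $\norm{U}_{\WSN}^3$. Summing these yields \eqref{perturbed-3d-ineq}.

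Next I would establish the interpolation bound for the weak norm. By its definition $\norm{U}_{\WSN}$ is a sum of three pieces, and the proof of the Resonance term lemma already records that each of the first two pieces interpolates between $\SI$ and $\ZI$ with exponents $1-2\ep+4\ep^2$ (roughly) and $2\ep(1-2\ep)$; the third piece $\norm{U}_{(1/3,1/6,-\ep|\ep)}$ is interpolated the same way in the proof of the Trilinear term lemma. Taking the worst of the three exponent pairs gives $\norm{U}_{\WSN}\lsm \norm{U}_{\SI}^{1-2\ep+4\ep^2}\norm{U}_{\ZI}^{2\ep(1-2\ep)}$. One also notes the trivial bound $\norm{U}_{\ZI}\lsm\norm{U}_{L_t^\I H^1}\lsm\norm{U_0}_{H^1}$ (by energy conservation/linear theory, since $\ZI$ is an interpolant of $L_t^\I L_x^3$ and $L_t^\I H^1$, both controlled at the energy level), so that $\norm{U}_{\ZI}$ stays $O(\ka)$ throughout.

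For the small data scattering conclusion I would close the estimate by continuity/bootstrap. Fix $\be$ large so that $2^{-\th\be}\ll1$. Plug the weak-norm interpolation and the bound $\norm{U}_{\ZI}\lsm\ka$ into \eqref{perturbed-3d-ineq} to get, on any interval $I$, $X(I)\lsm \ka + 2^{-\th\be}X(I)^{2(1-2\ep)}\ka^{4\ep^2}\cdot(\text{harmless}) + X(I)^{2}\ka^{\text{small}} + X(I)^3\ka^{\text{small}}$ where $X(I)=\norm{U}_{\SI}$; since the local theory gives $X(I)$ finite and small for short $I$ and $X(I)$ is continuous in $|I|$, a standard continuity argument upgrades this to $X(\bbr)\lsm\ka$, uniformly in $I$. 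Finiteness of $\norm{U}_{\SI}$ on all of $\bbr$ then yields scattering in $H^1$ in the usual way: the Duhamel tail $\int_t^{\pm\infty}K(-s)\jb{D}^{-1}(\cdots)\,ds$ and the normal-form correction $\jb{D}^{-1}\Om(U,U)(t)$ both tend to zero in $H^1$ as $t\to\pm\infty$, defining $u_\pm$; one reverses the change of variables $U=u-i\jb{D}^{-1}\ut$ to get the stated convergence. Uniqueness follows from the same contraction estimate on a difference of two solutions.

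The main obstacle I expect is not any single inequality but the bookkeeping of exponents: one must verify that all the Strichartz pairs implicitly used in defining $\SI$, $\ZI$ and $\WSN$ actually satisfy the admissibility constraints $1/q+(d-1)/r<(d-1)/2$ (or the second regime) of the radial Strichartz lemma, and that the frequency splitting $HH/HL/LH/LL$ with the parameter $\be$ is consistent across the three lemmas — in particular that the gain $2^{-\th\be}$ in the Boundary term lemma genuinely beats the loss $\lsm_\be$ (which grows in $\be$) in the Resonance and Trilinear lemmas. Making the constants line up so that the cubic-in-$X$ right-hand side can be absorbed is the delicate part; everything else is routine once the perturbed estimate \eqref{perturbed-3d-ineq} and the weak-norm interpolation are in hand.
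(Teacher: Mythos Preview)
Your proposal is correct and matches the paper's approach: the paper literally states the proposition as ``Combining all the above estimates,'' and your term-by-term application of the Resonance, Boundary, and Trilinear lemmas to the five pieces of \eqref{mainequ}, together with the weak-norm interpolations already recorded inside those lemma proofs, is exactly that combination. One small imprecision: for the bootstrap you do not need an independent a priori bound $\norm{U}_{\ZI}\lsm\norm{U_0}_{H^1}$ from energy conservation---since $\ZI$ is controlled by the $(0,\tfrac12,0|1)$ component of $\SI$, the $\ZI$ factors are absorbed directly into $X(I)$, and smallness of the initial data alone closes the continuity argument (the $2^{-\th\be}$ gain is only essential for the large-data 4D argument in Section~4).
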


\subsection{4D case}

Let $0<\ka\ll\ep\ll 1$ and $\de>0$ be some small coefficients. In this section, we take the normal form (\ref{normal-form}) with  $m=m_{LL}$, then the resonance term is
\EQ{
	\Tres(U,U) = (UU')_{HH+LH+HL}.
} 
We also have roughly
\EQ{
	\Om(U,U') \sim \brk{UU'}_{LL}.
}
Let $\SI$ be the strong Strichartz norm
\begin{equation*}
\SI = \brk{0,\half,0|1}\cap \brk{\half, \frac{5}{14}-\ka, \frac{3}{7}-4\ka|\frac{11}{14} + \ka}.
\end{equation*}
The interpolation space between $L_t^\I L_x^{3}$ and $L_t^\I H_x^{1}$ is defined as follows
\begin{equation*}
\ZI = \brk{0,0,-\half| -\frac{4}{3} +\de}.
\end{equation*}
The weak norm $\WSI$ is
\begin{align*}
	\normo{U}_{\WSI} = & \normo{P_{\goe 0}U}_{\brk{\half -\ep, \qua+\ep,7\ep} \cap \brk{\half -\ep, \qua + 3\ep, \frac{2}{7}}}  \\
	& + \normo{P_{\loe 0}U}_{\brk{\half -\ep, \qua - \ep, \ep} \cap L_t^3L_x^6 \cap \brk{\ep,2\ep\brk{\frac{5}{14}-\ka} + \brk{1-4\ep}\half, 1}}.
\end{align*}

\begin{lem}[Resonance term]
	Assume that $U$ and $U'$ are radial. For $0<\ka \ll \ep\ll 1$, we have
	\begin{equation}
	\normo{\int_{0}^{t} K(t-s) \<D\>^{-1} \brk{UU'}_{HH + HL +LH} \ds}_{\SI} \lsm_{\be} \normo{U}_{\WSI} \normo{U'}_{\WSI}.
	\end{equation}
\end{lem}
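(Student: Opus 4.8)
The plan is to run the same three-regime decomposition used in the 3D resonance lemma, only with the $d=4$ Strichartz numerology. As a preliminary one records that every component of $\WSI$ is an interpolant between the corresponding $\SI$- and $\ZI$-norms (the same computation that underlies the interpolation claim in the 4D perturbed-Strichartz proposition); the resonance term itself, however, is estimated directly against $\WSI$. Write $(UU')_{HH+HL+LH}=(UU')_{HH}+(UU')_{HL}+(UU')_{LH}$. Since $LH$ is the image of $HL$ under $(j,k)\mapsto(k,j)$ and the claimed bound is symmetric in $U,U'$, it suffices to treat the $HH$ and $HL$ pieces. For each dyadic block $(j,k)$ we bound $\normo{\int_0^t K(t-s)\jb{D}^{-1}(\uj\uk)\,ds}_{\SI}$ by the retarded (inhomogeneous) radial Strichartz inequality — the dual form of the radially improved Strichartz estimate, with the $\jb{D}^{-1}$ gain absorbed into the dual norm — which reduces the problem to bounding $\normo{\uj\uk}$ in a suitable dual space-time Besov norm $\brk{1/q',1/r',s_0'|s_1'}$. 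Hölder in $t$ and $x$ then splits the product as $\normo{\uj}_{\brk{1/q_1,1/r_1,\cdot}}\normo{\uk}_{\brk{1/q_2,1/r_2,\cdot}}$, Bernstein converts the Littlewood--Paley localisations into powers of $2^{j}$ and $2^{k}$, and finally one sums the geometric series in $j,k$ (the truncations $j,k\goe-\be-10$, resp.\ $k\loe-\be+10$, produce only a constant $C_\be$) and recognises the output as a product of components of $\normo{U}_{\WSI}$ and $\normo{U'}_{\WSI}$.

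For the $HH$ block ($j,k\goe-\be-10$) the product is frequency-localised to $\lesssim 2^{\max\fbrk{j,k}}$, so extracting the output $\ell^2$-sum costs $2^{O(\ep)\max\fbrk{j,k}}$, while $\jb{D}^{-1}$ returns $2^{-\max\fbrk{j,k}}$ on the high-output part and essentially nothing on the low-output part; this is exactly why the dual norm carries two regularity indices $s_0'|s_1'$, as in the 3D choice $\brk{1-2\ep,\half+2\ep,2\ep|4\ep}$. After Hölder and Bernstein the per-block bound should take the form $2^{-\ep(j+k)}\normo{\uj}_{\brk{\half-\ep,\qua+\ep,O(\ep)}}\normo{\uk}_{\brk{\half-\ep,\qua+\ep,O(\ep)}}$, and summing over $j,k\goe-\be-10$ gives $\lsmb\normo{P_{\goe0}U}_{\brk{\half-\ep,\qua+\ep,7\ep}}\normo{P_{\goe0}U'}_{\brk{\half-\ep,\qua+\ep,7\ep}}$, which is dominated by $\normo{U}_{\WSI}\normo{U'}_{\WSI}$; for the part of $HH$ with low-frequency output one instead spends the surplus high-frequency derivatives on $\uj,\uk$ and uses the second $P_{\goe0}$-slot $\brk{\half-\ep,\qua+3\ep,\frac{2}{7}}$ of $\WSI$, which is where the $d=4$ exponents are tightest.

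For the $HL$ block ($j\goe k+5$, $j\goe-\be-10$, $k\loe-\be+10$) the product lives at frequency $\sim2^{j}$, so $\jb{D}^{-1}$ contributes a clean $2^{-j}$ and the dual norm needs only a single regularity index; taking it to be the $d=4$ analogue of the 3D norm $\brk{\half,\frac{3}{4}-\ep,\qua+\ep}$ and applying Hölder in time with exponents $2/(1-2\ep)$ and $1/\ep$ and in space with the conjugate exponents, the leftover high-frequency derivative yields a summable $2^{-\ep j}$ and the low piece is handed a gain $2^{\ep k}$, so the block sum is $\lesssim\sum_{j\goe-\be-10}2^{-\ep j}\normo{\uj}_{\brk{\half-\ep,\qua+\ep,\,\qua+O(\ep)}}\cdot\sum_{k\loe-\be+10}2^{\ep k}\normo{\uk}_{\brk{\ep,\half-2\ep,-\ep}}$ (with the exponents adjusted to $d=4$); both sums converge up to $C_\be$ and are controlled by a $P_{\goe0}$-slot of $\normo{U}_{\WSI}$ and a $P_{\loe0}$-slot of $\normo{U'}_{\WSI}$ — here the low-frequency part of $\WSI$, including its $L_t^3L_x^6$ component and the $\brk{\ep,2\ep(\tfrac{5}{14}-\ka)+(1-4\ep)\half,1}$ slot, is what one uses. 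The $LH$ block is the mirror image, and, as in 3D, the explicit $j,k$-summations may be suppressed once each per-block estimate carries an honest negative power of $2$.

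The main obstacle is not any single inequality but the global scaling budget: $u^2$ is $L^2$-critical in $d=4$, so every gain $2^{-\ep(\cdots)}$ that makes the dyadic sums converge has to be manufactured out of the $\ep$-perturbation of the Strichartz exponents together with the radial improvement, and the $\ep$-losses from Bernstein, the $\ka$-losses hidden in the $\SI$-regularities $\frac{3}{7}-4\ka$ and $\frac{11}{14}+\ka$, and the parameter $\de$ in $\ZI$ must all be apportioned so that (i) every $(q,r)$ pair invoked lies in one of the two admissible regimes of the radially improved Strichartz estimate for $d=4$, (ii) all Besov embeddings and complex interpolations go through with the stated weights, and (iii) the exponent identities close exactly. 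The hierarchy $0<\ka\ll\ep\ll1$ in the statement is calibrated precisely for this, and the $HH$-block with low-frequency output is the pinch point.
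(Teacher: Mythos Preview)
Your overall strategy --- dual (inhomogeneous) radial Strichartz, H\"older in $(t,x)$, Bernstein, and dyadic summation --- is exactly what the paper does, and the $HH$ treatment you sketch matches the paper's: dual norm $\brk{1-2\ep,\half+2\ep,4\ep|6\ep}$, per-block bound $\lsmb 2^{-\half\ep(j+k)}\normo{\uj}_{\brk{\half-\ep,\qua+\ep,7\ep}}\normo{\uk}_{\brk{\half-\ep,\qua+\ep,7\ep}}$, then sum. But two of your slot identifications are off, and one of them would not close as written.

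First, the $\brk{\half-\ep,\qua+3\ep,\frac{2}{7}}$ slot is \emph{not} used for an ``$HH$ low-output'' sub-case; the dual indices $4\ep|6\ep$ already absorb both output regimes uniformly, and only the $7\ep$ slot is needed for $HH$. The $\frac{2}{7}$ slot is instead used for the \emph{high} factor in $HL$.

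Second --- and this is the real issue --- your $HL$ H\"older split (time exponents $\frac{2}{1-2\ep}$ and $\frac{1}{\ep}$, mirroring the 3D proof) does not match the 4D $\WSI$. The low-frequency slots you name, $L_t^3L_x^6$ and $\brk{\ep,2\ep(\tfrac{5}{14}-\ka)+(1-4\ep)\half,1}$, are not used in the resonance lemma at all (they appear only in the trilinear lemma), and neither is compatible with a time exponent $\ep$ at regularity $\sim 0$: the latter carries regularity $1$, so converting to regularity $0$ would cost $2^{-k}$, which diverges as $k\to-\infty$. In 4D the paper instead keeps the \emph{same} dual norm $\brk{1-2\ep,\half+2\ep,4\ep|6\ep}$ for $HL$ as for $HH$, splits time as $(1-2\ep)=(\half-\ep)+(\half-\ep)$ and space as $(\half+2\ep)=(\qua+3\ep)+(\qua-\ep)$, and places the high factor in $\brk{\half-\ep,\qua+3\ep,\frac{2}{7}}$ and the low factor in $\brk{\half-\ep,\qua-\ep,\ep}$, both of which are genuine $\WSI$-components. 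The per-block gain is then $2^{(-\frac{2}{7}+6\ep)j+\ep k}$, summable over $j\goe-\be-10$, $k\loe-\be+10$. So the fix is simply to abandon the 3D-style $(\half-\ep,\ep)$ time split for $HL$ and use the symmetric one; everything else in your outline is correct.
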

\begin{proof}
	By interpolation, for $j\goe -\be -10$, we have 
	\begin{align*}
		\normo{\uj}_{\brk{\half -\ep, \qua+\ep,7\ep}} \lsm_{\be} & \normo{\uj}_{\brk{\half-\ep,\brk{1-2\ep}\brk{\frac{5}{14}-\ka}, 7\ep + 4[\brk{1-2\ep}\brk{\frac{5}{14}-\ka}-\qua -\ep]}} \\
		\lsm_{\be} & \normo{\uj}_{\brk{\half-\ep,\brk{1-2\ep}\brk{\frac{3}{10}-\ka}, \brk{1-2\ep}\brk{\frac{11}{14} + \ka} - 2\ep \brk{-\frac{4}{3}+\de}}} \\
		\lsm_{\be} & \normo{\uj}_{\SI}^{1-2\ep} \normo{\uj}_{\ZI}^{2\ep}.
	\end{align*}
	For $\brk{j,k}\in HH$, we have 
	\begin{align*}
		\normo{\uj\uk}_{\brk{1-2\ep,\half+2\ep, 4\ep|6\ep}} \lsmb & 2^{\brk{6+\half}\ep \max\fbrk{j,k}} \norm{\uj\uk}_{L_t^{1/(1-2\ep)} L_x^{1/(1/2 + 2\ep)}} \\
		\lsm_{\be} & 2^{-\half \ep\brk{j+k}} \normo{\uj}_{\brk{\half-\ep, \qua + \ep, 7\ep}} \normo{\uk}_{\brk{\half-\ep, \qua + \ep, 7\ep}}.
	\end{align*}
	Next, consider $\brk{j,k} \in HL$, and $LH$ case follows easily. For $j\goe -\be -10$, we have interpolation
	\begin{align*}
		\normo{\uj}_{\brk{\half -\ep, \qua + 3\ep, \frac{2}{7}}} \lsm & \normo{\uj}_{\brk{\half -\ep, \brk{1-2\ep}\brk{\frac{5}{14}-\ka}, \frac{2}{7}+4[\brk{1-2\ep}\brk{\frac{5}{14}-\ka}-\qua - 3\ep]}} \\
		\lsm & \normo{\uj}_{\brk{\half, \frac{5}{14}-\ka, \frac{11}{14} + \ka}}^{1-2\ep} \normo{\uj}_{\brk{0,0,-\frac{4}{3}+\de}}^{2\ep},
	\end{align*}
	and for $k\loe-\be+10$, noting that $\ka\ll \ep$, 
	\begin{align*}
		\normo{\uk}_{\brk{\half -\ep, \qua - \ep, \ep}} \lsm & \normo{\uk}_{\brk{\half -\ep, \brk{1-2\ep}\brk{\frac{5}{14}-\ka}, \ep + 4[\brk{1-2\ep}\brk{\frac{5}{14}-\ka}-\qua + \ep]}} \\
		\lsm & \normo{\uk}_{\brk{\half -\ep, \brk{1-2\ep}\brk{\frac{5}{14}-\ka}, \frac{3}{7}  -\frac{13}{7}\ep -4\ka +8\ep\ka}} \\
		\lsm & \normo{\uk}_{\brk{\half, \frac{5}{14}-\ka, \frac{3}{7}-4\ka}}^{1-2\ep} \normo{\uk}_{\brk{0,0,-\half}}^{2\ep}.
	\end{align*}
	For $\brk{j,k} \in HL$, we have
	\begin{align*}
		\normo{\uj\uk}_{\brk{1-2\ep,\half+2\ep,4\ep|6\ep}} \lsm & \normo{\uj}_{\brk{\half-\ep,\qua+3\ep,6\ep}} \normo{\uk}_{\brk{\half-\ep,\qua-\ep,0}} \\
		\lsm & 2^{-\frac{2}{7}j + 6\ep j+\ep k}\normo{\uj}_{\brk{\half-\ep,\qua+3\ep,\frac{2}{7}}} \normo{\uk}_{\brk{\half-\ep,\qua-\ep,-\ep}}.
	\end{align*}
\end{proof}
\begin{lem}[Boundary term]
	Assume that $U$ and $U'$ are radial. For $0<\ka \ll \ep \ll 1$, there exists $\th>0$, such that
	\begin{equation}
	\normo{\<D\>^{-1} \Om(U,U')}_{\SI} \lsm 2^{-\th\be} \normo{U}_{\SI}^{1-2\ep}\normo{U}_{\ZI}^{2\ep} \normo{U'}_{\SI}^{1-2\ep}\normo{U'}_{\ZI}^{2\ep}.
	\end{equation}
\end{lem}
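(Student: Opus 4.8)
The plan is to reproduce, with the 4D numerology of this subsection, the argument used for the boundary-term lemma in the 3D case. Since we have chosen $m = m_{LL}$, the normal form decomposes as $\Om(U,U') = \sum_{(j,k)\in LL}\Om(\uj,\uk)$ with every pair satisfying $j,k\loe-\be+10$; in particular both inputs, hence the output, are supported at frequencies $\lesssim 2^{-\be}$, so $\<D\>^{-1}$ loses nothing and it suffices to bound $\sum_{(j,k)\in LL}\normo{\Om(\uj,\uk)}$ in each of the two components of $\SI$, namely $\brk{0,\half,0|1}$ and $\brk{\half,\frac{5}{14}-\ka,\frac{3}{7}-4\ka|\frac{11}{14}+\ka}$.

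For the $\brk{0,\half,0|1}$ component I would first apply the Coifman--Meyer bilinear multiplier estimate (Lemma~3.5 in \cite{guo2012small-za}, whose hypotheses hold because $\abs{\pd_\xi^\al\pd_\eta^\be m_{LL}}\lesssim\abs{\xi}^{-|\al|}\abs{\eta}^{-|\be|}$ as recorded above) to get $\normo{\Om(\uj,\uk)}_{\brk{0,\half,0}}\lesssim\normo{\uj}_{\brk{0,\qua,0}}\normo{\uk}_{\brk{0,\qua,0}}$, then the Besov embedding $\brk{0,\half-\ep,1-4\ep}\subset\brk{0,\qua,0}$ (here $1-4\ep = 4\brk{\half-\ep}-4\cdot\qua$ uses $d=4$), and then the frequency localisation of $\uj$ to replace $\brk{0,\half-\ep,1-4\ep}$ by $2^{(1-3\ep)j}$ times $\brk{0,\half-\ep,-\ep}$. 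Interpolating $\brk{0,\half-\ep,-\ep}$ between the low-frequency part of the first component $\brk{0,\half,0|1}$ of $\SI$ and $\ZI=\brk{0,0,-\half|-\frac43+\de}$, with weights $1-2\ep$ and $2\ep$, gives $\normo{\uj}_{\brk{0,\half-\ep,-\ep}}\lesssim\normo{\uj}_{\SI}^{1-2\ep}\normo{\uj}_{\ZI}^{2\ep}$. Finally one splits $2^{(1-3\ep)(j+k)} = 2^{\half(j+k)}\cdot 2^{(\half-3\ep)(j+k)}$: for $j,k\loe-\be+10$ the second factor is $\lesssim 2^{-\th\be}$ for some $\th>0$ once $\ep$ is small, while $2^{\half(j+k)}$ makes the sum over $(j,k)\in LL$ converge (by H\"older in the frequency indices, as in the resonance-term lemma), producing $2^{-\th\be}\normo{U}_{\SI}^{1-2\ep}\normo{U}_{\ZI}^{2\ep}\normo{U'}_{\SI}^{1-2\ep}\normo{U'}_{\ZI}^{2\ep}$ for this component.

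The $\brk{\half,\frac{5}{14}-\ka,\frac{3}{7}-4\ka|\frac{11}{14}+\ka}$ component follows the same scheme with somewhat heavier bookkeeping. Since the output has frequency $\loe 2^{-\be}\loe 1$ and positive regularity index, that index costs nothing, so H\"older in time ($\half = \qua+\qua$) together with the Coifman--Meyer bound reduces matters to $\normo{\uj}_{\brk{\qua,\half\brk{\frac{5}{14}-\ka},0}}\normo{\uk}_{\brk{\qua,\half\brk{\frac{5}{14}-\ka},0}}$. Each factor is then, after a Besov embedding that raises the Lebesgue exponent, interpolated among the two components of $\SI$ and $\ZI$ with weights adding to one --- the natural choice, exactly as in the 3D argument, being $\half$ on the second component of $\SI$, $\half-2\ep$ on $\brk{0,\half,0|1}$, and $2\ep$ on $\ZI$, so that $U$ carries total $\SI$-exponent $\half+\brk{\half-2\ep}=1-2\ep$ and $\ZI$-exponent $2\ep$. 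The frequency localisation is again used to trade the output regularity down and extract a genuine positive power of $2^{j}$ and $2^{k}$, hence a $2^{-\th\be}$ factor and a convergent sum over $LL$; combining the two components finishes the proof.

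The step I expect to be the only real obstacle is the exponent arithmetic: one must choose the intermediate $\dot B^{s}_{r}$ spaces so that after the Besov embeddings the regularity index exceeds the one produced by interpolating $\SI$ with $\ZI$ (so that, since $j,k\loe 0$, trading it down yields a positive power of $2^{j+k}$), while simultaneously the interpolation weights sum to $1$ with matching time and space Lebesgue exponents and the Coifman--Meyer/Besov inequalities remain applicable. This is the same mechanism as in the 3D boundary-term lemma; no new idea is needed, only the verification that the numerology of the 4D spaces $\SI$ and $\ZI$ is admissible once $0<\ka\ll\ep\ll1$ is chosen small enough to preserve all the strict inequalities.
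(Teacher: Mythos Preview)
Your plan is correct and, for the $\brk{0,\half,0|1}$ component, literally identical to the paper's argument. For the $\brk{\half,\frac{5}{14}-\ka,\frac{3}{7}-4\ka|\frac{11}{14}+\ka}$ component there is a minor but genuine difference worth noting: you propose the \emph{symmetric} H\"older split $\half=\qua+\qua$ with each factor placed in $\brk{\qua,\half(\frac{5}{14}-\ka),0}$ and then interpolated with weights $\half,\half-2\ep,2\ep$ exactly as in the 3D boundary-term lemma. The paper instead uses an \emph{asymmetric} split $\half=(\half-\ep)+\ep$, putting one factor in $\brk{\half-\ep,(1-2\ep)(\frac{5}{14}-\ka),\frac{4}{7}}$ (interpolated between the second $\SI$ component and $\ZI$) and the other in $\brk{\ep,(1-4\ep)\half+2\ep(\frac{5}{14}-\ka),-\frac{1}{8}\ep}$ (interpolated among all three). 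Your symmetric route does work in 4D: the interpolated space is $\brk{\qua,\frac{3}{7}-\half\ka-\ep,\frac{3}{14}-2\ka-\ep}$, embedding loses $1-4\ep$ derivatives, and the frequency localisation then yields a factor $2^{(\frac{11}{14}+2\ka-3\ep)j}$, comfortably positive, so one extracts $2^{-\th\be}$ with $\th$ close to $\frac{11}{7}$ and a summable remainder. This is cleaner than the paper's asymmetric bookkeeping and gives a larger $\th$; the paper's choice is not forced, and your transplant of the 3D scheme is a perfectly valid (indeed slightly simpler) alternative.
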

\begin{proof}
	First, we estimate $\normo{\Om(U,U)}_{\brk{0,\half,0}}$. For $\brk{j,k}\in LL$, we have
	\begin{align*}
		\normo{\Om(\uj,\uk)}_{\brk{0,\half,0}} \lsm & \normo{\uj}_{\brk{0,\qua,0}} \normo{\uk}_{\brk{0,\qua,0}} \\
		\lsm & \normo{\uj}_{\brk{0,\half-\ep,1-4\ep}} \normo{\uk}_{\brk{0,\half-\ep,1-4\ep}} \\
		\lsm & 2^{\qua\brk{j+k}} 2^{-\qua\be} \normo{\uj}_{\brk{0,\half-\ep, 2\ep\brk{-\half}}} \normo{\uk}_{\brk{0,\half-\ep, 2\ep\brk{-\half}}}
	\end{align*}
	
	As for the other norm in $\SI$, we have interpolation
	\begin{align*}
	\normo{\uj}_{\brk{\half-\ep, \brk{1-2\ep}\brk{\frac{5}{14}-\ka}, \frac{4}{7}}} \lsm & \normo{\uj}_{\brk{\half-\ep, \brk{1-2\ep}\brk{\frac{5}{14}-\ka}, \brk{1-2\ep}\brk{\frac{3}{7}-4\ka}+2\ep\brk{-\half}}} \\
	\lsm & \normo{\uj}_{\brk{\half, \frac{5}{14}-\ka, \frac{3}{7}-4\ka}}^{1-2\ep}\normo{\uj}_{\brk{0,0,-\half}}^{2\ep},
	\end{align*}	
	and
	\begin{align*}
		\normo{\uk}_{\brk{\ep, \brk{1-4\ep}\half + 2\ep\brk{\frac{5}{14}-\ka}, -\frac{1}{8}\ep}}  \lsm & \normo{\uk}_{\brk{\ep, \brk{1-4\ep}\half + 2\ep\brk{\frac{5}{14}-\ka}, 2\ep\brk{\frac{3}{7}-4\ka}+2\ep \brk{-\half}}} \\
		\lsm & \normo{\uk}_{\brk{0,\half,0}}^{1-4\ep} \normo{\uj}_{\brk{\half, \frac{5}{14}-\ka, \frac{3}{7}-4\ka}}^{2\ep}\normo{\uj}_{\brk{0,0,-\half}}^{2\ep}.
	\end{align*}
	Therefore, for $\brk{j,k}\in LL$ and $j\goe k$, we have
	\begin{align*}
		\normo{\Om(\uj,\uk)}_{\brk{\half, \frac{5}{14}-\ka, \frac{3}{7}-4\ka}} \lsm & \normo{\uj\uk}_{\brk{\half, \frac{5}{14}-\ka + \brk{1-4\ep}\half, \frac{3}{7}-4\ka+4[\brk{1-4\ep}\half]}} \\
		\sim & \normo{\uj\uk}_{\brk{\half, \brk{1-2\ep}\brk{\frac{5}{14}-\ka} + \brk{1-4\ep}\half + 2\ep\brk{\frac{5}{14}-\ka}, \frac{17}{7}-4\ka-8\ep}} \\
		\lsm & 2^{\half j+ \frac{1}{8}\ep k} 2^{-\half \be} \normo{\uj}_{\brk{\half-\ep, \brk{1-2\ep}\brk{\frac{5}{14}-\ka}, \frac{4}{7}}}\\ & \times\normo{\uk}_{\brk{\ep, \brk{1-4\ep}\half + 2\ep\brk{\frac{5}{14}-\ka}, -\frac{1}{8}\ep}}.
	\end{align*}
\end{proof}
\begin{lem}[Refined estimate for boundary term]
	Assume that $U$ and $U'$ are radial. For $0<\ka \ll \ep \ll 1$, 
	\begin{equation}
	\normo{\<D\>^{-1} \Om(U,U')}_{\WSI} \lsm 2^{-\be} \brk{\normo{U}_{\brk{0,\frac{1}{2},0|1}}\normo{U'}_{\WSI} + \normo{U}_{\WSI}\normo{U'}_{\brk{0,\frac{1}{2},0|1}}}.
	\end{equation}
\end{lem}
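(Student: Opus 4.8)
The plan is to estimate each constituent norm of $\WSI$ applied to $\<D\>^{-1}\Om(U,U')$ separately, exploiting the same mechanism as in the previous ``Boundary term'' lemma: because $m = m_{LL}$ forces $\max\{|\xi-\eta|,|\eta|\}\lsm 2^{-\be}$, the output of $\Om(\uj,\uk)$ is supported at frequency $\lsm 2^{-\be+C}$, and each dyadic piece $\uj, \uk$ with $j,k\loe -\be+10$ carries a gain of the form $2^{\sigma(j+k)}$ for some $\sigma>0$ coming from Bernstein's inequality when we trade an $L^r$ norm for a smaller $L^{r'}$ norm. Summing the geometric series over $j,k\loe -\be+10$ turns the pointwise gain $2^{\sigma\max\{j,k\}}$ into an overall factor $2^{-\sigma'\be}$, which we then absorb into $2^{-\be}$ by choosing $\be$ large (the statement only claims $2^{-\be}$, so any positive power suffices and we may even relabel $\be$).

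The key steps, in order: First I would recall from the construction in Section 2 that $\Om(U,U') = \sum_{(j,k)\in LL}\Om(\uj,\uk)$ up to the harmless $m_{LL}$ truncation, so it is enough to bound $\sum_{(j,k)\in LL}\normo{\<D\>^{-1}\Om(\uj,\uk)}_{X}$ for each factor space $X$ in the definition of $\WSI$: namely $\brk{\half-\ep,\qua+\ep,7\ep}$ and $\brk{\half-\ep,\qua+3\ep,\frac27}$ on $P_{\goe 0}$, and $\brk{\half-\ep,\qua-\ep,\ep}$, $L_t^3L_x^6$, and $\brk{\ep,2\ep\brk{\frac{5}{14}-\ka}+\brk{1-4\ep}\half,1}$ on $P_{\loe 0}$. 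Since the output frequency is $\lsm 2^{-\be}$, the $P_{\goe 0}$ pieces are trivially zero for $\be$ large, so only the three low-frequency norms and $L_t^3L_x^6$ survive. Second, for each surviving $X$ I would apply the Coifman--Meyer bilinear multiplier estimate (Lemma 3.5 of \cite{guo2012small-za}, as used in the previous lemma) to peel off $\Om$, reducing to a product estimate $\normo{\uj\uk}$ in an appropriate $L_t^qL_x^r$ space; then use H\"older in time and Bernstein in space to split this as $\normo{\uj}_{Y_1}\normo{\uk}_{Y_2}$, arranging that $Y_1 = \brk{0,\frac12,0|1}$-type (the ``strong'' factor, costing a derivative gain $2^{j}$ or so that is summable going down in frequency) and $Y_2$ equals one of the building-block norms of $\WSI$, with leftover powers $2^{\sigma(j+k)}$. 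Third, I would sum the geometric series in $j,k\loe -\be+10$ to collect the factor $2^{-c\be}$, and symmetrize in $(U,U')$ to obtain both terms $\normo{U}_{\brk{0,\frac12,0|1}}\normo{U'}_{\WSI} + \normo{U}_{\WSI}\normo{U'}_{\brk{0,\frac12,0|1}}$.

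The main obstacle will be the bookkeeping of the Besov exponents: I must verify that for each of the three target low-frequency spaces and for $L_t^3L_x^6$, the Lebesgue/regularity budget closes, i.e. that after applying Coifman--Meyer and H\"older the required product norm $\normo{\uj\uk}_{L_t^qL_x^r}$ admits a splitting where one factor is controlled by $\brk{0,\frac12,0|1}$ (which is essentially the energy norm of $U$ via Strichartz) and the other by a component of $\WSI$, with a genuinely negative total power of $2^{\be}$ after summation. This is exactly the type of interpolation-and-Bernstein juggling done in the previous two lemmas, so I expect it to go through verbatim with adjusted numerology; the only delicate point is ensuring the $L_t^3L_x^6$ norm — which sits at the $\dot H^1$-critical scaling in 4D and hence has no spare derivative — is handled by putting \emph{all} the frequency gain onto the other factor, which is fine since both inputs are at frequency $\lsm 2^{-\be}$. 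I would end by noting, as in the ``Resonance term'' proof, that the summation over $j$ and $k$ is routine and omit the details.
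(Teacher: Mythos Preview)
Your proposal is correct and follows essentially the same approach as the paper: use the $LL$ frequency restriction to put one factor in $L^\infty_{t,x}$ via Bernstein from $L^\infty_t L^2_x$ (gaining $2^{2j}\lsm 2^{j}2^{-\be}$ in $d=4$) while the other factor stays in the relevant $\WSI$ component unchanged, then sum. The paper's actual proof is even terser than your plan: it records only the single Bernstein inequality $\normo{\uj}_{(0,0,0)}\lsm 2^{j}2^{-\be}\normo{\uj}_{(0,\frac12,0)}$ together with the observation $\norm{\Om(U,U')}_{(1/q,1/r,s)}\lsm\sum_{(j,k)\in LL}2^{s\max\{j,k\}}\norm{\uj\uk}_{L^q_tL^r_x}$, which handles all the $\WSI$ components uniformly and makes your case-by-case treatment (and your concern about $L^3_tL^6_x$) unnecessary.
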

\begin{proof}
	This lemma is easy to obtain by H\"older inequality. In fact, for $j\loe -\be +10$, 
	\begin{align*}
		\normo{\uj}_{\brk{0,0,0}} \lsm  \normo{\uj}_{\brk{0,\half, 2}}
		\lsm 2^{j} 2^{-\be} \normo{\uj}_{\brk{0,\half, 0}},
	\end{align*}
	and for any $s>0$, $1\loe q,r\loe\I$,
	\EQ{
		\norm{\Om(U,U')}_{\brk{\rev q,\rev r, s}} \lsm \sum_{(j,k)\in LL} 2^{s\max\fbrk{j,k}} \norm{\uj\uk}_{L_t^q L_x^r}.
	}
\end{proof}
\begin{lem}[Trilinear term]
	Assume that $U$, $U''$ and $U'$ are radial. For $0<\ka \ll \ep \ll 1$, we have
	\begin{equation}
	\normo{\int_{0}^{t} K(t-s) \<D\>^{-1} \Om\brk{\<D\>^{-1}\brk{UU''},U'} ds}_{\SI} \lsm_{\be} \normo{U}_{\WSI}\normo{U''}_{\WSI}\normo{U'}_{\WSI}.
	\end{equation}
\end{lem}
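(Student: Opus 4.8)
The plan is to mirror the proof of the 3D Trilinear Lemma, now with the 4D data: the strong norm $\SI$, the interpolation space $\ZI$, and the five-component weak norm $\WSI$ above. The first step is to peel off the Duhamel operator: by Minkowski's inequality together with the inhomogeneous form of the radially improved Strichartz estimate stated at the beginning of this section (and $\<D\>^{-1}\colon L^{2}\to H^{1}$, which handles the $\brk{0,\half,0|1}$-component of $\SI$), it suffices to prove
\[
\normo{\<D\>^{-1}\Om\!\brk{\<D\>^{-1}\brk{UU''},\,U'}}_{L^{1}_{t}L^{2}_{x}}\lsm_{\be}\normo{U}_{\WSI}\normo{U''}_{\WSI}\normo{U'}_{\WSI}.
\]
The symbol $m_{LL}(\xi-\eta,\eta)/(i\Phi)$ of $\<D\>^{-1}\Om$ is supported where $\abs{\xi-\eta},\abs{\eta}\lsm 2^{-\be}$; there $\jb{\xi},\jb{\xi-\eta},\jb{\eta}\sim1$ and $\abs{\Phi}\gsm_{\be}1$, so the interior and exterior factors $\<D\>^{-1}$ cost only $\be$-dependent constants, $\<D\>^{-1}\Om$ is a bounded Coifman--Meyer bilinear operator (by the Coifman--Meyer type estimate, Lemma~3.5 of \cite{guo2012small-za}), and both of its arguments are restricted to frequency $\lsm2^{-\be}$, so only $P_{\loe-\be}U'$ and the low-frequency part $P_{\loe-\be}\brk{UU''}$ are seen. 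Using boundedness of a Coifman--Meyer operator from $L^{\rho_{1}}_{x}\times L^{\rho_{2}}_{x}$ to $L^{2}_{x}$ (with $1/\rho_{1}+1/\rho_{2}=1/2$) together with H\"older in time, the task reduces to a product of a bilinear factor $\normo{P_{\loe-\be}\brk{UU''}}_{L^{a_{1}}_{t}L^{\rho_{1}}_{x}}$ and a linear factor $\normo{P_{\loe-\be}U'}_{L^{a_{2}}_{t}L^{\rho_{2}}_{x}}$ with $1/a_{1}+1/a_{2}=1$, the exponents being chosen according to the frequency interaction inside $UU''$.

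The second step is the bilinear bound for $P_{\loe-\be}\brk{UU''}$. Only two kinds of Littlewood--Paley block $P_{j_{1}}U\,P_{j_{2}}U''$ survive the projection $P_{\loe-\be}$: those in which both $j_{1},j_{2}\loe-\be+O(1)$ (a pure low-frequency interaction), and the diagonal ones with $\abs{j_{1}-j_{2}}\loe5$ (where the common frequency can be arbitrarily high and a cancellation brings the output down to frequency $\loe2^{-\be}$). In the first case the natural choice is $\brk{a_{1},\rho_{1}}=\brk{3/2,3}$ with $U'$ placed in the $L^{3}_{t}L^{6}_{x}$-component of $\WSI$; one then bounds $\normo{P_{j_{1}}U\,P_{j_{2}}U''}_{L^{3/2}_{t}L^{3}_{x}}$ by H\"older and by interpolating the $L^{3}_{t}L^{6}_{x}$-component of $\WSI$ with the $\brk{\ep,\cdot,1}$-component, which supplies a factor $2^{\ep\min\fbrk{j_{1},j_{2}}}$ for summability. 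In the second case with common frequency $j\goe0$, the product must be pushed down to frequency $\loe2^{-\be}$; since $P_{\loe-\be}\colon L^{p}_{x}\to L^{\rho_{1}}_{x}$ has operator norm $2^{-4\be(1/p-1/\rho_{1})}\loe1$ for $p<\rho_{1}$, one first applies H\"older in a small-exponent $L^{p}_{x}$, which forces $a_{1}$ close to $1$ --- a product of two factors that are only in $L^{2-}_{t}$ lies in $L^{1+}_{t}$ --- and hence forces $U'$ into the $\brk{\ep,\cdot,1}$-component of $\WSI$, i.e.\ $L^{1/\ep}_{t}\dot{B}^{1}_{\rho_{2}}$ with $\rho_{2}\approx2$ and $\dot{B}^{1}_{\rho_{2}}\hookrightarrow L^{4+}$ in $\bbr^{4}$; one then extracts a factor $2^{-c(j_{1}+j_{2})}$ with $c>0$ from the positive Besov regularity ($2/7$, resp.\ $7\ep$) of the $P_{\goe0}$-components of $\WSI$, valid once $\ka\ll\ep\ll1$. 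In every regime the H\"older budget for the space-time exponents closes by exactly the arithmetic used in the 4D Resonance- and Boundary-term Lemmas, and summing the geometric series in $j_{1},j_{2}$ completes the bilinear bound; together with the first step this finishes the proof.

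The only genuine difficulty is the exponent bookkeeping. Because of the two factors $\<D\>^{-1}$ and the low-frequency support of $m_{LL}$, the trilinear term carries essentially no net smoothing --- in $\bbr^{4}$ it sits exactly at the scaling of the energy-critical cubic interaction --- so the available slack is only the $2^{-\be}$-type Bernstein gain from the frequency restriction (which also upgrades $L^{4/3}_{x}$ to $L^{2}_{x}$) together with the small regularity indices built into $\WSI$; in particular, since $\ZI$ here carries only the mild negative regularity $-\half\,|\,-\frac{4}{3}+\de$, the high-frequency blocks leave almost no room and the H\"older exponents are essentially forced, $\ka\ll\ep\ll1$ being used precisely to keep the summation factors $2^{\pm\ep j_{i}}$ available. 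No new idea is needed beyond what already appears in the Resonance- and Boundary-term Lemmas. Once this lemma is available, combining it with those two lemmas and the inhomogeneous Strichartz estimate closes a nonlinear bound of the form
\[
\normo{U}_{\SI}\lsm\normo{U_{0}}_{H^{1}}+2^{-\th\be}\normo{U}_{\SI}^{2(1-2\ep)}\normo{U}_{\ZI}^{2\ep}+\normo{U}_{\WSI}^{2}+\normo{U}_{\WSI}^{3},
\]
where, since every component of $\WSI$ interpolates between $\SI$ and $\ZI$ (as checked in the Resonance- and Boundary-term Lemmas), $\normo{U}_{\WSI}\lsm\normo{U}_{\SI}^{1-c\ep}\normo{U}_{\ZI}^{c\ep}$; this is the 4D analogue of Proposition~\ref{perturbed-3d}, and it yields small-data scattering in 4D (Proposition~\ref{perturbed-4d}).
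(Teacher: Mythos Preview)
Your decomposition and overall strategy match the paper's proof: split into the $(UU'')_{LL}$-contribution (handled by placing all three factors in the $L^{3}_{t}L^{6}_{x}$-component of $\WSI$, via $\normo{\Om\brk{(UU'')_{LL},U'}}_{L^{1}_{t}L^{2}_{x}}\lsm\normo{(UU'')_{LL}}_{L^{3/2}_{t}L^{3}_{x}}\normo{P_{\loe0}U'}_{L^{3}_{t}L^{6}_{x}}$) and the $(UU'')_{HH}$-contribution (bilinear part controlled by the Resonance-term bound, $U'$ placed in the $(\ep,\cdot,1)$-component). The paper also notes, as you do, that $(UU'')_{HL+LH}$ has high-frequency output and hence vanishes after the low-frequency restriction inherent in $\Om$.

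One technical point does not close as you have written it. You reduce everything to $L^{1}_{t}L^{2}_{x}$, but for the $HH$-piece the arithmetic gives only $L^{1/(1-\ep)}_{t}$: the two high-frequency factors sit in the $\brk{\tfrac12-\ep,\cdot,\cdot}$-components of $\WSI$, so their product lies in $L^{1/(1-2\ep)}_{t}$, and pairing with $U'$ in $L^{1/\ep}_{t}$ from the $(\ep,\cdot,1)$-component leaves $(1-2\ep)+\ep=1-\ep$ in the time exponent. The paper does not try to force this into $L^{1}_{t}L^{2}_{x}$; instead it estimates $\Om\brk{(UU'')_{HH},P_{k}U'}$ in the dual Strichartz norm $\brk{1-\ep,\tfrac12+3\ep,10\ep}$, using $\normo{P_{\loe-\be+10}(UU'')_{HH}}_{(1-2\ep,\tfrac12+2\ep,0)}\lsm\normo{U}_{\WSI}\normo{U''}_{\WSI}$ from the Resonance-term lemma together with $\normo{P_{k}U'}_{(\ep,\ep,0)}\lsm 2^{-k/2}\normo{P_{k}U'}_{(\ep,\,2\ep(\tfrac{5}{14}-\ka)+(1-4\ep)\tfrac12,\,1)}$. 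With that single adjustment --- allowing a non-energy dual Strichartz pair for the $HH$-piece --- your argument goes through; the $LL$-piece is in fact simpler than you sketch, since $(UU'')_{LL}=(P_{\loe-\be+10}U)(P_{\loe-\be+10}U'')$ is already a product and needs no dyadic summation.
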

\begin{proof}
	Since the output of $(UU'')_{HL+LH}$ has high frequency, we can divide the normal form into two parts
	\begin{equation*}
		\Om\brk{UU'',U'} = \Om\brk{\brk{UU''}_{LL},U'} + \Om\brk{\brk{UU''}_{HH},U'}.
	\end{equation*}
	By interpolation, for $j\loe -\be +10$, we have
	\begin{align*}
		\normo{\uj}_{\brk{\frac{1}{3},\frac{1}{6}, -\frac{13}{21}}} \lsm & \normo{\uj}_{\brk{\frac{1}{3},\frac{2}{3}\brk{\frac{5}{14}-\ka} + \brk{\frac{1}{3}-2\ep}\half, -\frac{13}{21} + 4[\frac{2}{3}\brk{\frac{5}{14}-\ka} + \brk{\frac{1}{3}-2\ep}\half-\frac{1}{6}]}} \\
		\lsm & \normo{\uj}_{\brk{\half, \frac{5}{14}-\ka, \frac{3}{7}-4\ka}}^{\frac{2}{3}} \normo{\uj}_{\brk{0,\half,0}}^{\frac{1}{3}-2\ep} \normo{\uj}_{\brk{0,0,-\half}}^{2\ep}.
	\end{align*}
	By boundeness of normal form, 
	\begin{align*}
		\normo{\Om\brk{\brk{UU''}_{LL},U'}}_{L^{1}_{t}L^{2}_{x}} \lsm & \normo{(UU'')_{LL}}_{L_t^{3/2}L_x^3}\normo{P_{\loe0}U'}_{L_t^3L_x^6} \\
		\lsm &\normo{P_{\loe0}U}_{L_t^3L_x^6} \normo{P_{\loe0}U''}_{L_t^3L_x^6} \normo{P_{\loe0}U'}_{L_t^3L_x^6}
	\end{align*}

	From the estimate of resonance term, we have
	\begin{equation*}
		\normo{P_{\loe -\be+10}\brk{UU''}_{HH}}_{\brk{1-2\ep,\half+2\ep,0}} \lsm \normo{U}_{\WSI}\normo{U''}_{\WSI}.
	\end{equation*}
	Therefore, 
	\begin{align*}
		\normo{\Om\brk{\brk{UU''}_{HH},\uk}}_{\brk{1-\ep,\half +3\ep, 10\ep}} \lsm & \normo{P_{\loe -\be+10}\brk{UU''}_{HH}}_{\brk{1-2\ep,\half+2\ep,0}} \normo{\uk}_{\brk{\ep,\ep,0}} \\
		\lsm & \normo{U}_{\WSI}\normo{U''}_{\WSI} 2^{-\half k} \normo{\uk}_{\brk{\ep,2\ep\brk{\frac{5}{14}-\ka} + \brk{1-4\ep}\half, 1}} \\
		\lsm & \normo{U}_{\WSI}\normo{U''}_{\WSI} 2^{-\half k} \normo{\uk}_{\brk{\half, \frac{5}{14}-\ka, \frac{3}{7}-4\ka}}^{2\ep} \\
		& \times \normo{\uk}_{\brk{0,\half,0}}^{1-4\ep} \normo{\uk}_{\brk{0,0,\half}}^{2\ep}.
	\end{align*}
\end{proof}
Combining all the above estimates, we obtain a perturbed Strichartz estimate in 4D case:
\begin{prop} \label{perturbed-4d}
	Let $d=4$, $0 < \ka \ll \ep \ll 1$ are small constants. Assume that $U$ is a solution of (\ref{mainequ}) with initial data $U_{0} \in \hra$ , then there exists $\th>0$ such that
	\begin{equation} \label{perturbed-4d-ineq}
	\normo{U}_{\SI} \lsm \normo{U_{0}}_{H^{1}} + 2^{-\th\be} \normo{U}_{\SI}^{2\brk{1-2\ep}} \normo{U}_{\ZI}^{4\ep} + \normo{U}_{\WSI}^{2} + \normo{U}_{\WSI}^{3},
	\end{equation}
	where the weak norm satisfies interpolation
	\EQ{
	\normo{U}_{\WSI} \lsm \norm{U}_{S(I)}^{1-2\ep} \norm{U}_{Z(I)}^{2\ep}.
	}
	Furthermore, we have small data scattering for (\ref{main}) in 4D case.
\end{prop}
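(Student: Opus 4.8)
The plan is to substitute the four preceding estimates into the post-normal-form equation \eqref{mainequ} and then run a fixed-point argument. Write \eqref{mainequ} as $U=\mathrm{(I)}+\mathrm{(II)}+\mathrm{(III)}+\mathrm{(IV)}+\mathrm{(V)}$ for its five right-hand terms and estimate each in $\SI$. The linear term $\mathrm{(I)}=K(t)\brk{U_0+i\jb{D}^{-1}\Om(U,U)(0)}$ is handled by summing the dyadic pieces against the two components of $\SI$: against $\brk{0,\half,0|1}$ this is the energy estimate, and against $\brk{\half,\frac{5}{14}-\ka,\frac{3}{7}-4\ka|\frac{11}{14}+\ka}$ it is the radially improved Strichartz estimate recalled above (the $\ka$'s in the exponents provide exactly the strict-inequality room that lemma needs), so $\normo{\mathrm{(I)}}_{\SI}\lsm\normo{U_0}_{H^1}+\normo{\jb{D}^{-1}\Om(U,U)(0)}_{H^1}$; the last term is bounded using $\SI\hookrightarrow L^\infty_tH^1_x$ and the Boundary term lemma, giving $\lsm 2^{-\th\be}\normo{U}_{\SI}^{2(1-2\ep)}\normo{U}_{\ZI}^{4\ep}$. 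The boundary term $\mathrm{(II)}=-i\jb{D}^{-1}\Om(U,U)$ is bounded directly by the Boundary term lemma (same bound), the resonance term $\mathrm{(III)}$ by the Resonance term lemma ($\lsm_\be\normo{U}_{\WSI}^2$), and the two trilinear terms $\mathrm{(IV)},\mathrm{(V)}$ by the Trilinear term lemma ($\lsm_\be\normo{U}_{\WSI}^3$). Adding these produces \eqref{perturbed-4d-ineq}, with implicit constants depending on the (fixed) parameter $\be$.

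Next I would record $\normo{U}_{\WSI}\lsm\normo{U}_{\SI}^{1-2\ep}\normo{U}_{\ZI}^{2\ep}$. This follows by applying the Besov embeddings and complex interpolation recalled above to each summand of $\WSI$: for the $P_{\goe0}$ pieces one first raises the spatial integrability in $r$ up to exponent $(1-2\ep)\brk{\frac{5}{14}-\ka}$ via a Bernstein/Besov embedding, then interpolates between the high-frequency endpoint $\frac{11}{14}+\ka$ of $\SI$ and the endpoint $-\frac43+\de$ of $\ZI$; for the $P_{\loe0}$ pieces one interpolates against the low-frequency endpoints $\frac37-4\ka$ of $\SI$ and $-\half$ of $\ZI$. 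These are exactly the interpolation steps already performed inside the proofs of the Resonance, Boundary and Trilinear lemmas; the exponents of $\SI$, $\ZI$, $\WSI$ were chosen precisely so that the outcome carries the power $2\ep$ on the $\ZI$ factor.

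For the small-data scattering claim, note that Bernstein on $\bbr^4$ gives $\normo{P_kU}_{L^\infty_x}\lsm 2^{2k}\normo{P_kU}_{L^2_x}$, which together with the exponents $-\half$ and $-\frac43+\de$ of $\ZI$ yields $\normo{U}_{\ZI}\lsm\normo{U}_{L^\infty_tH^1_x}\lsm\normo{U}_{\SI}$, hence also $\normo{U}_{\WSI}\lsm\normo{U}_{\SI}$ by the interpolation above. Therefore, for any solution of \eqref{mainequ}, \eqref{perturbed-4d-ineq} reduces to $\normo{U}_{\SI}\lsm\normo{U_0}_{H^1}+C_\be\brk{\normo{U}_{\SI}^2+\normo{U}_{\SI}^3}$ (the $2^{-\th\be}$-term becomes $\lsm\normo{U}_{\SI}^2$ since $2(1-2\ep)+4\ep=2$); when $\normo{(u_0,u_1)}_{H^1\times L^2}\ll1$, a contraction-mapping argument in $\SI$ with $I=\bbr$ (the difference estimate having the same multilinear shape) produces a unique global solution of \eqref{mainequ} whose $\SI$ and $\WSI$ norms over $\bbr$ are finite and small. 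Scattering then follows in the standard way: from \eqref{mainequ}, $K(-t)\brk{U(t)+i\jb{D}^{-1}\Om(U,U)(t)}$ is Cauchy in $H^1$ as $t\to\pm\infty$ (the tail of the Duhamel term on $[t,\infty)$, resp.\ $(-\infty,t]$, is controlled by $\normo{U}_{\WSI}^2+\normo{U}_{\WSI}^3$ there), its limit defines $u_\pm\in H^1$, and $\normo{\jb{D}^{-1}\Om(U,U)(t)}_{H^1}\to0$ because that quantity has frequency support in $\abs\xi\lsm2^{-\be}$ --- so its $H^1$ norm is comparable to its $L^2$ norm, which lies in $L^1_t(\bbr)$ by the global Strichartz bound and is Lipschitz in $t$ by the equation. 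This gives $\normo{u-i\jb{D}^{-1}u_t-e^{it\jb{D}}u_\pm}_{H^1}\to0$ as $t\to\pm\infty$.

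Since the Resonance, Boundary and Trilinear lemmas are already in hand, the remaining work is mostly structural; the point to get right is the decoupling of the scattering norm $\SI$ from the energy-type norm $\ZI$, so that the terms $2^{-\th\be}\normo{U}_{\SI}^{2(1-2\ep)}\normo{U}_{\ZI}^{4\ep}$ and $\normo{U}_{\WSI}^2$ inherit enough smallness through the control of $\normo{U}_{\ZI}$ (and, for the large-data theorem, through the factor $2^{-\th\be}$) to be absorbed, rather than requiring $\normo{U}_{\SI}$ itself to be small --- this is exactly what makes the scheme "perturbable" and is what Theorem~\ref{thm-large} exploits. A secondary delicate point is that the interpolation $\WSI\hookrightarrow\SI^{1-2\ep}\ZI^{2\ep}$ must come out with the sharp exponent $2\ep$, which is what forces the precise Lebesgue and Besov exponents in the definitions of $\SI$, $\ZI$, and $\WSI$.
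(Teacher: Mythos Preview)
Your proposal is correct and follows essentially the same approach as the paper: the paper itself presents Proposition~\ref{perturbed-4d} as an immediate consequence of combining the four preceding lemmas (Resonance, Boundary, Refined boundary, Trilinear) applied to the post-normal-form equation \eqref{mainequ}, exactly as you outline, and the interpolation $\normo{U}_{\WSI}\lsm\normo{U}_{\SI}^{1-2\ep}\normo{U}_{\ZI}^{2\ep}$ is indeed extracted from the interpolation steps inside those lemma proofs. Your scattering argument---Cauchy limit of $K(-t)\brk{U(t)+i\jb{D}^{-1}\Om(U,U)(t)}$ in $H^1$ plus vanishing of the boundary term via finite time-Lebesgue norm and Lipschitz continuity---is the same mechanism the paper uses at the end of Section~4.2 (the paper phrases the vanishing through $P_{\loe0}U\in L^3_tL^6_x$ rather than your $L^1_t$ bound, but the logic is identical).
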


\section{Variational analysis and Virial/Morawetz estimate}
We first review a classical result on the global well-posedness and blow-up dichotomy for Klein-Gordon equations with general nonlinearity $u^{p+1}$, which is due to Payne and Sattinger (see \cite{payne1975saddle}). Assume that $p\in\mathbb{N}_+$, and $u(t,x):\bbr\times\bbr^{d} \rightarrow \bbr$ is a solution of Klein-Gordon equation
\begin{equation} \label{general-equ}
\pdt^{2} u - \diff u + u = f(u),
\end{equation}
where $f(u) = u^{p+1}$. Define that $F(u) := \int f(u) \text{du}$ and $G(u) := uf(u) - 2F(u)$. The energy is
\begin{equation*}
	E(u(t),\ut(t)) = \int_{\bbr^{d}} \half |\pdt u(t,x)|^{2} + \half |\nabla u(t,x)|^{2} + \half |u(t,x)|^{2} -  \frac{1}{p+2} u(t)^{p+2} \dx.
\end{equation*}
Let $Q$ be the ground state, i.e. the unique radial solution to the elliptic equation
\EQ{
-\diff Q + Q = Q^{p+1},
}
and $Q$ is positive, smooth, and has exponential decay.
Define the stationary energy
\begin{equation*}
	J(\ph) = \half \normo{\nabla \ph}_2^2 + \frac{1}{2} \normo{\ph}_2^2 - \frac{1}{p+2} \int_{\bbr^d} \ph^{p+2} \dx,
\end{equation*}
where $\ph\in H^1$.
The potential well is
\begin{align*}
	j(\la) = \LL_{\al,\be} J(\ph) := & J\brk{e^{\al\la}\ph(e^{-\be\la}x)} \\
	= & \half e^{\brk{2\al+\brk{d-2}\be}\la} \normo{\nabla \ph}_2^2 + \frac{1}{2} e^{\brk{2\al+d\be}\la} \normo{\ph}_2^2 \\
	& - \frac{1}{p+2} e^{\brk{\brk{p+2}\al + d\be}\la} \int_{\bbr^d} \ph^{p+2} \dx.
\end{align*}
Define the sign functional $K_{\al,\be}(\ph) := \pd_{\la}|_{\la=0} \LL_{\al,\be} J(\ph)$, then
\begin{align*}
	K_{\al,\be}(\ph)  = &  \half \brk{2\al+\brk{d-2}\be} \normo{\nabla \ph}_2^2 + \frac{1}{2} \brk{2\al+d\be} \normo{\ph}_2^2 \\
	& - \frac{1}{p+2} \brk{\brk{p+2}\al + d\be} \int_{\bbr^d} \ph^{p+2} \dx.
\end{align*}
Minimal energy with respect to $K_{\al,\be}$ is
\begin{align*}
	m_{\al,\be} := \inf\{J(\ph):\ph\in H^1 \backslash \{0\}, K_{\al,\be}(\ph)=0\}.
\end{align*}
We take two subsets in energy space:
\EQ{
	\K_{\al,\be}^{+} := \{\brk{\uo,\ua} \in H^1\times L^2: E(\uo,\ua)<m_{\al,\be},\ K_{\al,\be}\brk{\uo} \goe0\}, \\
	\K_{\al,\be}^{-} := \{\brk{\uo,\ua} \in H^1\times L^2: E(\uo,\ua)<m_{\al,\be},\ K_{\al,\be}\brk{\uo} <0\}.
}
Now, with the above notations, we are prepared to state the dichotomy result:
\begin{thm}\label{PS-dichotomy}
	Let $f(u) = u^{2}$, and  $3\loe d\loe 5$. Assume further that $u(t)\in C\brk{I:H^1}$ is the solution to (\ref{main}) with initial data $u(0,x)=u_{0}$ and $u_{t}(0,x)=u_{1}$, where $I$ is the maximal lifespan interval. If $\brk{\uo,\ua}\in \K_{1,0}^+$ (or $\K_{1,0}^-$), then $u(t)$ is global (or blows up) in finite time, respectively. Furthermore, we have that $m_{1,0}=E(Q,0)$.
\end{thm}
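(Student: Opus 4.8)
The plan is to adapt the classical Payne--Sattinger argument to the Klein--Gordon flow \eqref{main}, in three stages: (i) a variational analysis of the level $m_{1,0}$, showing $m_{1,0}>0$, that it is attained, and that $m_{1,0}=E(Q,0)$; (ii) invariance of $\K_{1,0}^{\pm}$ under the flow, via conservation of energy and continuity of $t\mapsto K_{1,0}(u(t))$; and (iii) global existence inside $\K_{1,0}^{+}$ from a uniform $H^{1}\times L^{2}$ bound, together with finite-time blow-up inside $\K_{1,0}^{-}$ from a Levine-type concavity argument.

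For (i) I would first record the elementary algebra at $(\alpha,\beta)=(1,0)$, $p=1$: with $j(\lambda):=J(e^{\lambda}\varphi)=\tfrac12 e^{2\lambda}\|\varphi\|_{H^{1}}^{2}-\tfrac13 e^{3\lambda}\int_{\bbr^{d}}\varphi^{3}$ one has $K_{1,0}(\varphi)=j'(0)=\|\varphi\|_{H^{1}}^{2}-\int\varphi^{3}$, the identity $J(\varphi)=\tfrac16\|\varphi\|_{H^{1}}^{2}+\tfrac13 K_{1,0}(\varphi)$, and in particular $J(\varphi)=\tfrac16\|\varphi\|_{H^{1}}^{2}$ when $K_{1,0}(\varphi)=0$. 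Since $3<2d/(d-2)$ for $3\le d\le 5$, the Sobolev inequality $\int\varphi^{3}\lesssim\|\varphi\|_{H^{1}}^{3}$ forces $\|\varphi\|_{H^{1}}\gtrsim 1$ on the constraint set $\{K_{1,0}=0,\ \varphi\ne 0\}$, hence $m_{1,0}>0$. Testing the ground state equation against $Q$ gives $K_{1,0}(Q)=0$, so $m_{1,0}\le J(Q)=E(Q,0)$. For the reverse inequality I would take a minimizing sequence, replace each element by the symmetric decreasing rearrangement of its modulus (which increases neither $J$ nor $K_{1,0}$) and then rescale by $e^{\lambda}$ with $\lambda\le 0$ to restore $K_{1,0}=0$; this yields a bounded radial minimizing sequence, and the compact embedding $H^{1}_{\mathrm{rad}}\hookrightarrow L^{3}$ produces a nonzero, nonnegative, radial minimizer $\psi$ with $\|\psi\|_{H^{1}}^{2}=6m_{1,0}$. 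Because $\langle K_{1,0}'(\psi),\psi\rangle=2\|\psi\|_{H^{1}}^{2}-3\int\psi^{3}=-\int\psi^{3}<0$, the Lagrange multiplier in $J'(\psi)=\mu K_{1,0}'(\psi)$ must vanish, so $\psi$ solves $-\Delta\psi+\psi=\psi^{2}$; by the strong maximum principle $\psi>0$, and by uniqueness of the ground state $\psi=Q$, whence $m_{1,0}=E(Q,0)$. I would also record the one-line consequence of $\max_{\lambda}j(\lambda)=A^{3}/(6B^{2})$ with $A:=\|\varphi\|_{H^{1}}^{2}$, $B:=\int\varphi^{3}$: if $K_{1,0}(\varphi)<0$ then $A<B$ and $A^{3}/(6B^{2})\ge m_{1,0}$ together give $\|\varphi\|_{H^{1}}^{2}>6m_{1,0}$, while $K_{1,0}(\varphi)\ge 0$ gives $\|\varphi\|_{H^{1}}^{2}\le 6J(\varphi)$.

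For (ii) and the positive part of (iii): since $u\in C(I:H^{1})$ and $K_{1,0}$ is continuous on $H^{1}$ (again $H^{1}\hookrightarrow L^{3}$), the map $t\mapsto K_{1,0}(u(t))$ is continuous, while conservation of energy gives $J(u(t))\le E(u(t),u_{t}(t))=E(u_{0},u_{1})<m_{1,0}$ for all $t\in I$. Hence $K_{1,0}(u(t))=0$ would force either $u(t)\ne 0$ with $J(u(t))\ge m_{1,0}$ (impossible) or $u(t)=0$. On $\K_{1,0}^{-}$ the lower bound above forces $\|u(t)\|_{H^{1}}^{2}>6m_{1,0}>0$ as long as $K_{1,0}(u(t))<0$, so the sign cannot change through a zero of $u$ and $K_{1,0}(u(t))<0$ persists on $I$; on $\K_{1,0}^{+}$, the estimate $K_{1,0}(\varphi)=\|\varphi\|_{H^{1}}^{2}-O(\|\varphi\|_{H^{1}}^{3})\ge 0$ near $\varphi=0$ shows the sign cannot turn negative through a zero of $u$, so $K_{1,0}(u(t))\ge 0$ persists. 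Thus $\K_{1,0}^{\pm}$ are invariant. On $\K_{1,0}^{+}$ one then has $\tfrac12\|u_{t}(t)\|_{2}^{2}+\tfrac16\|u(t)\|_{H^{1}}^{2}\le \tfrac12\|u_{t}(t)\|_{2}^{2}+J(u(t))=E(u_{0},u_{1})$ for all $t\in I$, and the blow-up alternative of the local well-posedness theory yields $I=\bbr$, i.e.\ global existence.

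For the blow-up in $\K_{1,0}^{-}$ I would argue by contradiction, assuming the forward lifespan infinite. Set $g(t):=\|u(t)\|_{2}^{2}$, which is strictly positive on $\K_{1,0}^{-}$ since there $u(t)\ne 0$. Differentiating twice, using \eqref{main} and then conservation of energy to eliminate $\int u^{3}$, gives $g''=2\|u_{t}\|_{2}^{2}-2K_{1,0}(u)=5\|u_{t}\|_{2}^{2}+\|u\|_{H^{1}}^{2}-6E$; since $\|u\|_{H^{1}}^{2}>6m_{1,0}>6E$ on $\K_{1,0}^{-}$, this yields $g''\ge 5\|u_{t}\|_{2}^{2}$ and also $g''\ge 6\delta_{0}$ with $\delta_{0}:=m_{1,0}-E(u_{0},u_{1})>0$. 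By Cauchy--Schwarz $(g')^{2}=4\langle u,u_{t}\rangle^{2}\le 4g\|u_{t}\|_{2}^{2}$, so $g g''\ge 5g\|u_{t}\|_{2}^{2}\ge\tfrac54(g')^{2}$, i.e.\ $(g^{-1/4})''=-\tfrac14 g^{-9/4}(g g''-\tfrac54(g')^{2})\le 0$. Hence $h:=g^{-1/4}$ is positive and concave on $[0,\infty)$, so $h$ is nondecreasing (a positive concave function on a half-line cannot have negative slope), and therefore $g=h^{-4}$ is nonincreasing; but $g'(t)\ge g'(0)+6\delta_{0}t\to+\infty$ makes $g$ eventually strictly increasing, a contradiction. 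Thus the forward (and, by time reversibility, the backward) lifespan is finite, i.e.\ the solution blows up in finite time. I expect the main obstacle to be stage (i): constructing the minimizer and identifying $m_{1,0}=E(Q,0)$ (the rearrangement/rescaling reduction to radial minimizing sequences, the compact Strauss embedding, and the vanishing of the Lagrange multiplier). The quantitative dichotomy $K_{1,0}(\varphi)<0\Rightarrow\|\varphi\|_{H^{1}}^{2}>6m_{1,0}$ then drives everything else, and the concavity exponent $\tfrac14$ is available precisely because the nonlinearity has degree $3>2$.
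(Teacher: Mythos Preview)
Your proposal is correct and follows essentially the same Payne--Sattinger scheme as the paper: variational characterization of $m_{1,0}$ via rearrangement, radial compactness, and a Lagrange-multiplier identification with $Q$; invariance of $\K_{1,0}^{\pm}$ by energy conservation and the small-$H^{1}$ sign of $K_{1,0}$; and the Levine concavity argument for blow-up.

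Two small differences are worth noting. First, for the quantitative bound in $\K_{1,0}^{-}$ you compute $\max_{\lambda}J(e^{\lambda}\varphi)=A^{3}/(6B^{2})$ directly and deduce $\|\varphi\|_{H^{1}}^{2}>6m_{1,0}$; the paper instead integrates the differential inequality $j''(\lambda)\le 2j'(\lambda)$ to obtain $K_{1,0}(\varphi)\le -2(m_{1,0}-J(\varphi))$. Your route is a little cleaner and, crucially, gives $\|u(t)\|_{H^{1}}^{2}>6m_{1,0}>6E$ for \emph{all} $t$, so in your concavity step $g''\ge 5\|u_{t}\|_{2}^{2}$ holds immediately, whereas the paper first uses $-K_{1,0}>C_{1}$ to force $y\to\infty$ and only then invokes $\|u\|_{H^{1}}^{2}>6E$ for large times. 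Second, your Lagrange-multiplier computation (pairing $J'(\psi)=\mu K_{1,0}'(\psi)$ with $\psi$ and using $\langle K_{1,0}'(\psi),\psi\rangle=-\int\psi^{3}<0$) is the same identity the paper writes out componentwise; both hinge on $\int\psi^{3}>0$, which your nonnegativity-via-rearrangement step guarantees.
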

\begin{proof}
	The proof is the same as that in \cite{payne1975saddle} and \cite{kg-focusing}, so we only sketch the proof. 
	
	The first step is to reduce to the positive case using the argument in \cite{payne1975saddle}. Let $\phi$ be the extremal of  $m_{1,0}$. We claim that $\phi$ cannot change sign. If $\phi$ changes sign, since
	\EQ{
	\int \abs{\nabla \abs{\phi}}^2 = \int_{\fbrk{\phi \ne 0}} \abs{\nabla \abs{\phi}}^2 = \int_{\fbrk{\phi \ne 0}} \aabs{\frac{\phi\nabla\phi}{|\phi|}}^2 =\int \abs{\nabla \phi}^2,
	}
	we have
	\EQ{
	0 = K_{1,0}(\phi) > K_{1,0}(|\phi|). 
	}
	Consider
	\EQ{
	\wt{j}(\la) = J(e^\la |\phi|) = \half e^{2\la} \int \abs{\nabla \phi}^2 + \half e^{2\la} \int \phi^2 -\rev 3 e^{3\la} \int |\phi|^3.
	}
	Then, there exists unique $\bar{\la}<0$ such that $\wt{j}'(\bar{\la}) = K_{1,0}(e^{\bar{\la}}|\phi|) = 0$. Thus, $K_{1,0}(e^{\bar{\la}}\phi) >0 $. Since $K_{1,0}(e^{\la}\phi) >0$ for $\la<0$, after integrating in $\la$, $J(e^{\bar{\la}}\phi) < J(\phi)$. We have that
	\EQ{
	J(e^{\bar{\la}}|\phi|) < J(e^{\bar{\la}}\phi) < J(\phi) =m_{1,0},
	}
	which contradicts to the definition of $m_{1,0}$.
	
	Furthermore, we can prove that
	\EQ{
	m_{1,0} = \inf\{J(\ph):\ph\in H^1 \backslash \{0\}, K_{1,0}(\ph)=0, \ph>0\}.
	}
	Let the right hand side of the above identity be $m'_{1,0}$. By definition, $m'_{1,0}\goe m_{1,0}$. Using similar argument of the preceding paragraph, $m'_{1,0}\loe m_{1,0}$. Since we consider the case $\ph>0$, the potential part can be viewed as $\int \ph^3 =\int |\ph|^3$. Thus, we can apply the method in gauge invariant case, see \cite{kg-focusing} for example. 
	
	Next, we can prove the Mountain Pass structure
	\EQ{
	m_{1,0} = \inf\{J(\ph) - \rev3 K_{1,0}(\ph):\ph\in H^1 \backslash \{0\}, K_{1,0}(\ph)\loe0, \phi>0\}.
	}
	Suppose that $K_{1,0}(\ph) < 0$ and $\ph>0$. Noting that $\int \ph^3>0$, consider the function
	\EQ{
	g(\la) = J(e^\la \ph) = \half e^{2\la} \int \abs{\nabla \ph}^2 + \half e^{2\la} \int \ph^2 - \rev 3 e^{3\la} \int \ph^3.
	}
	Since $g'(0) = K_{1,0}(\ph) < 0$, there exists $\la^*<0$ such that $g'(\la^*) = K_{1,0}(e^{\la^*} \ph) = 0$. Let 
	\EQ{
	G_0(\ph) = J(\ph) - \rev3 K_{1,0}(\ph) = \rev 6 \norm{\ph}_{H^1}^2.
	}
	We have $m_{1,0} \loe J(e^{\la^*}\ph) = G_0(e^{\la^*}\ph) < G_0(\ph)$. Therefore for $\ph>0$, when $K_{1,0}(\ph)<0$, $G_0(\ph)>m_{1,0}$, and when $K_{1,0}(\ph)=0$, $J(\ph) = G_0(\ph)$. Thus, the two infima coincide.
	
	Now, we can prove the existence of extremal of $m_{1,0}$. Let $\ph_n>0$ be some minimizing sequence, namely $K_{1,0}(\ph_n)\loe 0$ and $G_0(\ph_n)\ra m_{1,0}$. Using rearrangement function, we may assume that $\ph_n$ are radial. Using the compactness of embedding $H_{rad}^1 \subset L^3$, we can extract the limit $\ph_{\I}$ in $L^3$ up to subsequence. We also have $K_{1,0}(\ph_\I) \loe 0$, $J(\ph_\I) \loe m_{1,0}$ and $G_0(\ph_\I) \loe m_{1,0}$. 
	
	If $\ph_\I=0$, since $K_{1,0}(\ph_n)\loe 0$, $\int \ph_n^3 = \int |\ph_n|^3$ and strong convergence of $\ph_n$ in $L^3$, we have $\ph_n$ also converge strongly to $0$ in $H^1$. By Sobolev inequality
	\EQ{
		\norm{u}_{L^3}^3 \lsm \norm{u}_{H^1}^3,
	}
	$K_{1,0}(\ph_n) > 0$ for large $n$, which contradicts to the definition of $\ph_n$. Therefore $\ph_\I \ne 0$.
	
	If $K_{1,0}(\ph_\I) < 0 $, there exists $\la^*<0$ such that $K_{1,0}(e^{\la^*}\ph_\I) = 0$. By mountain pass, we have
	\EQ{
	m_{1,0} \loe G_0(e^{\la^*}\ph_\I) = e^{2\la^*} G_0(\ph_\I) < m_{1,0}.
	}
	Therefore, $K_{1,0}(\ph_\I) = 0$, $\ph_n$ converge strongly to $\ph_\I$ in $H^1$ and $J(\ph_\I)=m_{1,0}$.
	
	Let $\phi$ be the extremal, namely $\phi$ is radial, $J(\phi)=m_{1,0}$, $\phi>0$ and $K_{1,0}(\phi)=0$. By Lagrange multiplier method, there exists $\mu\in \bbr$ such that
	\EQ{
	- \De \phi +  \phi -  \phi^2 + \mu\brk{- 2\De \phi + 2\phi - 3\phi^2} = 0.
	}
	Then, multiply with $\phi$ and integrate in $x$. Noting that $\int \ph^3 = \int |\phi|^3$, we have
	\EQ{
		(1+2\mu)  \int \abs{\nabla \phi}^2 + (1+2\mu) \int \phi^2 - (1+3\mu) \int |\phi|^3 = 0.
	}
	Combining with $K_{1,0}(\phi)=0$, 
	\EQ{
	\mu \int |\phi|^3 =0.
	}
	We now have $\mu=0$, and that $\phi$ satisfies
	\EQ{
	-\De \phi + \phi = \phi^2.
	}
	By elliptic theory that $Q$ is the unique radial solution to the above equation in $H^1$, then we have $\phi = Q$ and $m_{1,0} = J(Q) = E(Q,0)$.

	We next prove that $\K_{1,0}^\pm$ is invariant under the flow \eqref{main}. Let $u(t,x)$ be the solution to \eqref{main} with maximal lifespan $I$. Assume that initial data $(u_0,u_1)\in\K_{1,0}^+$. If there exist $t*\in I$ such that $u(t^*)=0$, by Sobolev inequality
	\EQ{
	\norm{u}_{L^3}^3 \lsm \norm{u}_{H^1}^3.
	}
	we have $\norm{u(t)}_3^3 = o\brk{\norm{u}_{H^1}^2}$ near $t^*$, which implies $K_{1,0}(u(t)) \goe 0$. Noting that $E(u(t),u_t(t)) = E(u_0,u_1) < m_{1,0}$, the solution starting from $\K_{1,0}^+$ will remain in the same set, and so does the solution from $\K_{1,0}^-$.
	
	Now, we can prove the global-wellposedness of \eqref{main} in $\K_{1,0}^+$. Let $u(t,x)$ be the solution to \eqref{main} with maximal lifespan $I$. Assume that initial data $(u_0,u_1)\in\K_{1,0}^+$. Then, $K_{1,0}(u(t))\goe 0$ for all $t\in I$. We have
	\EQ{
	E(u,u_t) = & \half \int \abs{\nabla u}^2 + \half \int u^2 + \half \int u_t^2 - \rev 3\int u^3 \\
	\goe & \rev 6 \int \abs{\nabla u}^2 + \rev 6 \int u^2 + \half \int u_t^2.
	}
	Therefore, $E(u_0,u_1) \sim \norm{(u,u_t)}_{H^1 \times L^2}$. Combing with the local theory, we can extend the solution on $\bbr$.
	
	Next, we prove the variational estimate in $\K^-_{1,0}$. Let $J(\ph) < m_{1,0}$ and $K_{1,0}(\ph) < 0$. Observe that $\ph$ may change sign, but $\int \ph^3 \dx>0$. Consider
	\EQ{
	g(\la) = J(e^\la \ph) = \half e^{2\la} \int \abs{\nabla \ph}^2 + \half e^{2\la} \int \ph^2 -\rev 3 e^{3\la} \int \ph^3.
	}
	Since $g'(0) = K_{1,0}(\ph) < 0$, there exists $\la^*<0$ such that $g'(\la^*) = 0$. Noting that $\int \ph^3 \dx>0 $, we also have
	\EQ{
	g''(\la) = 2 e^{2\la} \int \abs{\nabla \ph}^2 + 2 e^{2\la} \int \ph^2 - 3 e^{3\la} \int \ph^3 \loe 2 g'(\la).
	}
	Integrating the above inequality on $[t^*,0]$, we have
	\EQ{
	g'(0) - g'(\la^*) \loe 2 (g(0) - g(\la^*)),
	}
	which gives the upper bound 
	\EQ{
	K_{1,0}(\ph) \loe - 2\brk{m_{1,0} - J(\ph)}.
	}

	Finally, we prove the blow-up result in $\K_{1,0}^-$ using the classical convex argument in \cite{payne1975saddle}. Let $u(t,x)$ be the solution to \eqref{main} with maximal lifespan $I$. Assume that initial data $(u_0,u_1)\in\K_{1,0}^-$. We consider $t>0$ case, and let $[0,+\I)\subset I$. Let $y(t)= \norm{u(t)}_{L^2}^2$. By equation \eqref{main},
	\EQ{
	\pdt^2 y(t) = & 2\brk{\norm{u_t}_2^2 - K_{1,0}(u(t))} \\
	= & 5\norm{u_t}_2^2 - 6E(u_0,u_t) + \norm{u}_{H^1}^2.
	}
	By variational estimate, we have $-K_{1,0}(u(t)) > C_1 $ for $t\goe 0$ and  some $C_1>0$. Then, $\pdt^2 y > 2C_1$. We have $\lim_{t\ra+\I} y(t) = +\I$. For large $t_0>0$ and $t>t_0$, we must have $\norm{u(t)}_{H^1}^2 > 6E(u_0,u_t)$. Thus, by Cauchy-Schwarz inequality,
	\EQ{
		\pdt^2 y(t) > 5\norm{u_t}_2^2 \goe \frac{5}{4} \frac{\brk{\pdt y(t)}^2}{y(t)},
	}
	or
	\EQ{
	\pdt^2(y^{-1/4}(t)) = - \frac{1}{4} y^{-9/4}\brk{yy''-\frac{5}{4}(y')^2} < 0.
	}
	Integrating on $[t_0,t]$, we have
	\EQ{
	\pdt(y^{-1/4})(t) \loe \pdt(y^{-1/4})(t_0),
	}
	and
	\EQ{
	y^{-1/4}(t) \loe \pdt(y^{-1/4})(t_0) (t-t_0) + y^{-1/4}(t_0).
	}
	Note that
	\EQ{
	\pdt(y^{-1/4})(t_0) = -\frac{1}{4} y(t_0)^{-5/4}y'(t_0)<0,
	}
	then for large $t>t_0$,
	\EQ{
	0\loe y^{-1/4}(t) \loe \pdt(y^{-1/4})(t_0) (t-t_0) + y^{-1/4}(t_0) < 0.
	}
	Therefore, the solution must blow up in finite time.
\end{proof}

\subsection{Variation in \texorpdfstring{$L^2$}{L squared}-critical case}
We consider another dichotomy below the ground state as follows:
\EQ{
	\K^{+} := \{\brk{\uo,\ua} \in H^1\times L^2: E(\uo,\ua)<E(Q,0),\ \normo{\uo}_2 < \normo{Q}_2\}, \\
	\K^{-} := \{\brk{\uo,\ua} \in H^1\times L^2: E(\uo,\ua)<E(Q,0),\ \normo{\uo}_2 > \normo{Q}_2\}.
}
In this subsection, we are going to review the result in \cite{killip2012scattering} that under the $L^2$-critical assumption, the solution starting from $\K^+$ exists globally, and the virial functional has a positive lower bound. Then, we will prove that the solution to 4D quadratic Klein-Gordon equation (\ref{main}) with initial data in $\K^-$ blows up in finite time. This blow-up result seems to be new, but the proof is essentially the same as that in \cite{ibrahim2014threshold}.

Now, suppose that $p=4/d$. Since $p$ is an integer, we have $d=4$ and $p=1$. We first recall the classical sharp Gagliardo-Nirenberg inequality:
\begin{prop}\cite{Nagy1941,Wei83CMP}
	For any $g\in H^1$, we have
	$$\int_{\R^d} |g(x)|^{\frac{2\brk{d+2}}{d}} \dx \loe \frac{d+2}{d} \brk{\frac{\normo{g}_{2}}{\normo{Q}_{2}}}^{\frac{4}{d}} \normo{\nabla g}_{2}^{2},$$
	where the equality holds if and only if $g(x) = \al Q\brk{\la\brk{x-x_{0}}}$ for some $\al\in \C$, $\la\in \brk{0,\I}$, and $x_0\in \bbr^d$. 
	Furthermore, suppose that a function $g(x)\in H^1$ satisfies $\normo{g}_{2} < \normo{Q}_{2}$, then we have
	\begin{equation}
	\normo{\nabla g}_{2}^{2} - \frac{d}{d+2} \int |g(x)|^{\frac{2\brk{d+2}}{d}}\dx \goe \brk{1 - \brk{\frac{\normo{g}_{2}}{\normo{Q}_{2}}}^{\frac{4}{d}}} \normo{\nabla g}_{2}^{2}.
	\end{equation}
\end{prop}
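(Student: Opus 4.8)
The plan is to reproduce the classical proof, via Weinstein's variational principle. Set $q:=\frac{2(d+2)}{d}$ and define the Weinstein functional
\[
W(g):=\frac{\normo{\nabla g}_2^{2}\,\normo{g}_2^{4/d}}{\int_{\R^d}|g(x)|^{q}\dx},\qquad g\in H^1\setminus\{0\},
\]
with $\beta:=\inf_{g\in H^1\setminus\{0\}}W(g)$; note $\beta>0$ by the elementary (non-sharp) Gagliardo--Nirenberg inequality. The functional $W$ is invariant under the symmetries $g\mapsto\al\,g(\la(\cdot-x_0))$ for $\al\in\C\setminus\{0\}$, $\la>0$, $x_0\in\R^d$, and by the diamagnetic inequality $\normo{\nabla|g|}_2\loe\normo{\nabla g}_2$ one has $W(g)\goe W(|g|)$, so it is enough to minimize over nonnegative real-valued functions. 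The first inequality of the Proposition is then equivalent to the identity $\beta=\frac{d}{d+2}\normo{Q}_2^{4/d}$, with equality precisely on the minimizers.

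First I would establish existence of a minimizer. Starting from a minimizing sequence, Schwarz symmetrization (which preserves every $L^p$ norm and, by P\'olya--Szeg\H{o}, does not increase $\normo{\nabla\cdot}_2$) lets me take the sequence radially symmetric and nonincreasing, and after the scaling symmetries normalize it so that $\normo{\nabla g_n}_2=\normo{g_n}_2=1$, whence $\int|g_n|^q\to 1/\beta$. Since $2<q<2d/(d-2)$ for $d\goe3$ (and $2<q<\infty$ for $d\loe2$), the Strauss compact embedding $H^1_{\mathrm{rad}}(\R^d)\hookrightarrow L^q(\R^d)$ extracts a subsequence converging strongly in $L^q$ and weakly in $H^1$ to some $\psi$ with $\int|\psi|^q=1/\beta>0$, so $\psi\ne0$; weak lower semicontinuity of $\normo{\nabla\cdot}_2$ and $\normo{\cdot}_2$ then forces $W(\psi)\loe\beta$, hence $W(\psi)=\beta$ and $\psi$ is a minimizer (and the convergence is in fact strong in $H^1$).

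Next I would identify the minimizers. The Euler--Lagrange equation, obtained as a Lagrange-multiplier problem for $\normo{\nabla g}_2^2$ under the constraints $\normo{g}_2=\mathrm{const}$ and $\int|g|^q=\mathrm{const}$, reads $-a\De\psi+b\psi=c\,\psi^{q-1}$ with $q-1=1+4/d$ and constants $a,b,c>0$ (positivity being forced by pairing the equation with $\psi$ and with the Pohozaev identity, using $\psi\goe0$, $\psi\not\equiv0$). The rescaling $\psi(x)=\mu\,\phi(\nu x)$ with $\nu=\sqrt{b/a}$, $\mu=(b/c)^{d/4}$ turns this into $-\De\phi+\phi=\phi^{1+4/d}$; since $\phi\goe0$ is radial and in $H^1$, the strong maximum principle gives $\phi>0$, and Kwong's uniqueness theorem forces $\phi=Q$. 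Hence the minimizers of $W$ are precisely the $\al\,Q(\la(\cdot-x_0))$, and $\beta=W(Q)$. To evaluate $W(Q)$ I would pair $-\De Q+Q=Q^{1+4/d}$ first with $Q$ (Nehari) and then with $x\cdot\nabla Q$ (Pohozaev): writing $A=\normo{\nabla Q}_2^2$, $B=\normo{Q}_2^2$, $C=\int Q^q$, these give $A+B=C$ and $\frac{2-d}{2}A-\frac d2 B=-\frac{d^2}{2(d+2)}C$, so $A=\frac{d}{d+2}C$ and $B=\frac{2}{d+2}C$; in particular $\int Q^q=\frac{d+2}{d}\normo{\nabla Q}_2^2$ and
\[
\beta=W(Q)=\frac{A\,B^{2/d}}{C}=\frac{d}{d+2}\,\normo{Q}_2^{4/d}.
\]
Inserting this value of $\beta$ into $W(g)\goe\beta$ is exactly the asserted bound $\int|g|^{q}\loe\frac{d+2}{d}\brk{\normo{g}_2/\normo{Q}_2}^{4/d}\normo{\nabla g}_2^2$, and the equality case is the characterization of minimizers just obtained.

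The second assertion is then immediate: if $\normo{g}_2<\normo{Q}_2$, the bound gives
\[
\frac{d}{d+2}\int|g|^q\loe\brk{\normo{g}_2/\normo{Q}_2}^{4/d}\normo{\nabla g}_2^2\loe\normo{\nabla g}_2^2,
\]
so subtracting the left-hand side from $\normo{\nabla g}_2^2$ gives $\normo{\nabla g}_2^2-\frac{d}{d+2}\int|g|^q\goe\brk{1-\brk{\normo{g}_2/\normo{Q}_2}^{4/d}}\normo{\nabla g}_2^2$. The step I expect to be the main obstacle is the appeal to Kwong's uniqueness theorem for positive radial $H^1$ solutions of $-\De\phi+\phi=\phi^{1+4/d}$ (together with securing compactness of the minimizing sequence); the remaining manipulations are routine bookkeeping with the scaling symmetries and the Pohozaev identity.
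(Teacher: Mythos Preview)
The paper does not supply its own proof of this proposition: it is stated with citations to Nagy and Weinstein and then used as a black box, so there is nothing in the paper to compare your argument against. What you have written is exactly the classical Weinstein variational proof, and the computations (scaling invariance of $W$, compactness via symmetrization and the radial Strauss embedding, the Nehari/Pohozaev system $A+B=C$, $\tfrac{d-2}{2}A+\tfrac d2 B=\tfrac{d^2}{2(d+2)}C$ giving $A=\tfrac{d}{d+2}C$, $B=\tfrac{2}{d+2}C$, hence $\beta=\tfrac{d}{d+2}\|Q\|_2^{4/d}$) are all correct; the second inequality is then, as you say, an immediate rearrangement of the first.

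One small point worth tightening: your argument shows that \emph{some} minimizer is a rescaled translate of $Q$, hence identifies $\beta$, but to obtain the full ``only if'' in the equality statement you should also note that any minimizer $g$ forces equality in the diamagnetic step and in P\'olya--Szeg\H{o}, so $|g|$ is already (a translate of) a radial nonincreasing function, after which the Euler--Lagrange/Kwong step applies to $|g|$ and recovers the full orbit $\alpha Q(\lambda(\cdot-x_0))$. This is routine but should be said explicitly if you want the equality characterization and not just the value of the sharp constant.
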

As a corollary, the inequality gives us an equivalent characterization for Virial functional 
\begin{equation*}
K(g) := \int |\nabla g|^2 - \frac{dp}{2\brk{p+2}} \int |u|^{p+2} = \int |\nabla g|^2 - \frac{d}{d+2} \int |g|^{\frac{2\brk{d+2}}{d}}.
\end{equation*}
In general, $\normo{g}_2 \loe \normo{Q}_2$ implies that $K(g)\goe 0$.
Recall the energy identity and the Pohozaev identity for the ground state $Q$, i.e.
\begin{align*}
	\normo{Q}_{H^{1}}^{2} =  \normo{Q}_{p+2}^{p+2},	
\end{align*}
and
\begin{equation*}
	\frac{d-2}{2} \normo{\nabla Q}_{2}^{2} + \frac{d}{2} \normo{Q}_{2}^{2} = \frac{d}{p+2}\normo{Q}_{p+2}^{p+2},
\end{equation*}
which imply
\begin{align*}
	E(Q,0) = & \frac{1}{2} \normo{Q}_{2}^2.
\end{align*}
Note that for 4D quadratic equation \eqref{main},
\EQ{
E(\uo,\ua) =& \half \normo{\nabla\uo}_2^2 - \rev 3 \int u_0^3 \dx + \half \normo{\uo}_2^2 + \half \normo{\ua}_2^2 \\
\goe & \half K\brk{\uo} + \half \normo{\uo}_2^2 + \half \normo{\ua}_2^2,
}
so we have
\begin{cor}
	Suppose that $E(u_{0},u_{1}) < E(Q,0)$, then $\normo{u_{0}}_{2} < \normo{Q}_{2}$ if and only if $K(\uo)>0$ or $\uo=0$.
\end{cor}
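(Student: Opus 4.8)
The plan is to deduce this statement purely variationally from three ingredients already in hand: the quantitative sharp Gagliardo--Nirenberg inequality in the Proposition above, the identity $E(Q,0)=\half\norm{Q}_2^2$ obtained from the energy and Pohozaev identities for $Q$, and the elementary energy lower bound $E(\uo,\ua)\goe\half K(\uo)+\half\norm{\uo}_2^2+\half\norm{\ua}_2^2$ displayed just before the statement. No information about the flow is needed.

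\emph{Forward implication.} Assume $\norm{\uo}_2<\norm{Q}_2$. If $\uo=0$ there is nothing to prove, so take $\uo\ne0$; since $\uo\in H^1(\R^4)$, this forces $\norm{\nabla\uo}_2>0$ (a vanishing gradient would make $\uo$ constant, hence $0$ in $L^2$). With $d=4$ we have $\frac{2(d+2)}{d}=3$ and $\frac{d}{d+2}=\frac23$, so $K(\uo)=\norm{\nabla\uo}_2^2-\frac23\int_{\R^4}\uo^3\dx$, and the second assertion of the Proposition gives $K(\uo)\goe\brk{1-\norm{\uo}_2/\norm{Q}_2}\norm{\nabla\uo}_2^2>0$, the right-hand side being a product of two strictly positive numbers. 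Note this direction does not use the energy hypothesis at all.

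\emph{Reverse implication.} Assume $E(\uo,\ua)<E(Q,0)$ and that either $\uo=0$ or $K(\uo)>0$. If $\uo=0$ then $\norm{\uo}_2=0<\norm{Q}_2$. If $K(\uo)>0$, suppose for contradiction that $\norm{\uo}_2\goe\norm{Q}_2$; then the energy lower bound gives $E(\uo,\ua)\goe\half K(\uo)+\half\norm{\uo}_2^2+\half\norm{\ua}_2^2>\half\norm{\uo}_2^2\goe\half\norm{Q}_2^2=E(Q,0)$, contradicting $E(\uo,\ua)<E(Q,0)$. Hence $\norm{\uo}_2<\norm{Q}_2$.

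I do not anticipate a genuine obstacle; the only care needed is to invoke the \emph{quantitative} form of Gagliardo--Nirenberg (with the explicit deficit $1-\brk{\norm{\uo}_2/\norm{Q}_2}^{4/d}$) rather than the bare inequality, and to observe that under $E(\uo,\ua)<E(Q,0)$ the degenerate cases are ruled out: $\norm{\uo}_2=\norm{Q}_2$ cannot occur, since $K(\uo)\goe0$ would then force $E(\uo,\ua)\goe\half\norm{Q}_2^2=E(Q,0)$, and $K(\uo)=0$ with $\uo\ne0$ cannot occur, since the equality case of Gagliardo--Nirenberg would force $\norm{\uo}_2\goe\norm{Q}_2$ and hence again $E(\uo,\ua)\goe E(Q,0)$. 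Thus, inside $\fbrk{E(\uo,\ua)<E(Q,0)}$ the sets $\K^+$ and $\K^-$ are precisely $\fbrk{K(\uo)>0 \text{ or } \uo=0}$ and $\fbrk{K(\uo)<0}$, which is exactly the dichotomy this corollary records and the form in which it will be used in the Virial/Morawetz analysis.
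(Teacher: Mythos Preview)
Your proof is correct and follows exactly the approach the paper intends: the corollary is stated without proof in the paper, as an immediate consequence of the quantitative Gagliardo--Nirenberg inequality, the identity $E(Q,0)=\half\norm{Q}_2^2$, and the energy lower bound displayed just before it; you have simply written out these details. One small slip: you write $K(\uo)=\norm{\nabla\uo}_2^2-\frac23\int\uo^3\dx$, but the paper's $K$ is defined with $|u_0|^3$, and the Gagliardo--Nirenberg proposition bounds $\int|u_0|^3$; since $\int u_0^3\loe\int|u_0|^3$, this only helps you, so the argument is unaffected. Also, in the degenerate-case remark, ``equality case of Gagliardo--Nirenberg'' is not quite the right phrase---$K(\uo)=0$ is not equality in G--N---but the conclusion $\norm{\uo}_2\goe\norm{Q}_2$ follows directly from the quantitative form you already used, so the logic is sound.
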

\begin{prop}
	Let $d=4$ and $p=1$. Assume that $\normo{u_{0}}_{2} < \normo{Q}_{2}$ and $E(u_{0},u_{1}) < E(Q,0)$. If $u(t,x)\in C\brk{I:H^{1}}$ is a solution of (\ref{main}) with initial data $u(0,x)=u_{0}$ and $u_{t}(0,x)=u_{1}$, for all $t\in I$, we have 
	\begin{align}
		 \normo{u(t)}_{2}  < A \normo{Q}_{2}, 
	\end{align}
	for some $A=A(E(\uo,\ut))= A(E(u(t),\ut(t)))<1$. Moreover,
	\begin{equation*}
		E(u(t),\ut(t)) \sim \normo{u}_{H^{1}}^{2} + \normo{\ut}_{2}^{2},
	\end{equation*}
	for all $t\in I$.
\end{prop}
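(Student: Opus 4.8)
The proof is a continuity (bootstrap) argument driven by conservation of energy, the essential feature being that the Klein--Gordon flow does \emph{not} conserve $\normo{u(t)}_2$: the mass bound cannot be read off the data and must instead be extracted from the conserved energy via the sharp Gagliardo--Nirenberg inequality and closed self-consistently. Since $d=4$ and $p=1$ one has $\frac{2(d+2)}{d}=3$ and $\frac{4}{d}=1$, so the sharp inequality reads $\int u^3\dx\loe\int|u|^3\dx\loe\frac{3}{2}\,\normo{Q}_2^{-1}\normo{u}_2\normo{\nabla u}_2^2$. Writing $f(t):=\normo{u(t)}_2/\normo{Q}_2$ (continuous on $I$ because $u\in C(I:H^1)$) and using $E(Q,0)=\frac{1}{2}\normo{Q}_2^2$ together with energy conservation, one gets, for every $t\in I$ with $f(t)\loe1$,
\EQ{
E(u_0,u_1)=E(u(t),\ut(t))\goe\frac{1}{2}\brk{1-f(t)}\normo{\nabla u(t)}_2^2+\frac{1}{2}\normo{u(t)}_2^2+\frac{1}{2}\normo{\ut(t)}_2^2\goe E(Q,0)f(t)^2.
}

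First I would establish the uniform mass bound. Since $f(0)<1$ by hypothesis, the chain above at $t=0$ shows $E(u_0,u_1)\goe0$ and $f(0)^2\loe E(u_0,u_1)/E(Q,0)=:A_0^2<1$. Let $I_0:=\{t\in I:f(t)<1\}$, an open and nonempty subset of $I$. If $t_\ast\in I$ is a limit point of $I_0$, then $f(t_\ast)\loe1$ by continuity, so the chain applies at $t_\ast$ and forces $f(t_\ast)^2\loe A_0^2<1$, hence $f(t_\ast)<1$; thus $I_0$ is also closed in $I$, and since $I$ is connected $I_0=I$. Therefore $f(t)^2\loe A_0^2$ for all $t\in I$, and picking any $A\in(A_0,1)$ — which depends only on $E(u_0,u_1)=E(u(t),\ut(t))$ — gives $\normo{u(t)}_2<A\normo{Q}_2$ throughout $I$.

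Next I would deduce the energy coercivity from the refined Gagliardo--Nirenberg inequality of the preceding Proposition. Since $\normo{u(t)}_2<\normo{Q}_2$, it gives
\EQ{
\normo{\nabla u(t)}_2^2-\frac{2}{3}\int|u(t)|^3\dx\goe\brk{1-f(t)}\normo{\nabla u(t)}_2^2\goe\brk{1-A_0}\normo{\nabla u(t)}_2^2,
}
so $E(u,\ut)=\brk{\frac{1}{2}\normo{\nabla u}_2^2-\frac{1}{3}\int u^3\dx}+\frac{1}{2}\normo{u}_2^2+\frac{1}{2}\normo{\ut}_2^2\gsm\brk{1-A_0}\brk{\normo{u}_{H^1}^2+\normo{\ut}_2^2}$. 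The reverse inequality is immediate: the sharp inequality and $f(t)\loe A_0$ give $\int u^3\dx\loe\frac{3}{2}A_0\normo{\nabla u}_2^2$, hence $E(u,\ut)\lsm\normo{u}_{H^1}^2+\normo{\ut}_2^2$, with all implied constants depending only on $E(u_0,u_1)$. This is precisely the asserted statement.

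The one genuinely delicate point is the open--closed argument in the first step: because $\normo{u(t)}_2$ is not conserved, the bound cannot be transported from $t=0$, and it is closed only by exploiting that the cubic nonlinearity is completely absorbed by the mass term precisely in the regime $\normo{u}_2<\normo{Q}_2$ — which is exactly what keeps the set $I_0$ simultaneously open and closed. Everything else reduces directly to the two (sharp and refined) Gagliardo--Nirenberg inequalities recorded above.
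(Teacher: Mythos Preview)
Your proof is correct and follows essentially the same route as the paper: both arguments hinge on the sharp Gagliardo--Nirenberg inequality together with $E(Q,0)=\tfrac12\|Q\|_2^2$ to show that the barrier $\|u(t)\|_2=\|Q\|_2$ cannot be reached (the paper phrases this as a contradiction at the first touching time, you as an open--closed argument), and then both extract the quantitative gap $\|u(t)\|_2^2\le 2E(u_0,u_1)/\|Q\|_2^2<1$ and feed it back into Gagliardo--Nirenberg to get coercivity. Your write-up is slightly more explicit about the continuity step and also spells out the trivial upper bound $E\lesssim\|u\|_{H^1}^2+\|u_t\|_2^2$, which the paper leaves implicit.
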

\begin{proof}
	First, note that by Gagliardo-Nirenberg inequality, if for any $t\in I$, such that $\normo{u(t)}_{2}=\normo{Q}_{2}$, we must have $K(u(t))\goe 0$. Thus, \EQ{E(u(t),\ut(t)) \goe \half \normo{u(t)}_2^2 = \half \normo{Q}_2^2,} 
	which contradicts to our assumption. Therefore,
	\begin{align*}
	\normo{u\brk{t,\cdot}}_{2} < \normo{Q}_{2},
	\end{align*}
	or equivalently, $K(u(t)) \goe 0$
	for all $t\in I$. 
	
	Next, we are going to derive a gap between $\normo{u(t)}_{2}$ and $\normo{Q}_{2}$. 
	From the assumption, there exists a constant $A<1$ such that
	\begin{equation*}
	E(u(t),\ut(t)) < \frac{A^2}{2} \normo{Q}_{2}^2.
	\end{equation*}
	Therefore, $\normo{u(t)}_2 < A \normo{Q}_{2}$, for all $t\in I$. Using Gagliardo-Nirenberg inequality again, 
	\begin{equation*}
		 A \normo{\nabla u}_{2}^{2} \goe \frac{2}{3} \normo{u(t,x)}_{3}^{3}.
	\end{equation*}
	Therefore, we have
	\EQ{
	K(u(t)) \goe \brk{1-A} \normo{\nabla u(t)}_{2}^{2}
	}
	and
	\begin{align*}
		E(u(t),\ut(t)) \goe \half \brk{1-A} \normo{\nabla u(t)}_{2}^{2} +  \half \normo{\pdt u(t)}_{2}^{2}.
	\end{align*}
\end{proof}
From the local theory and the uniform bound of $\normo{u}_{H^{1}} + \normo{\pdt u}_{L^{2}}$, we obtain the global well-posedness for $L^2$-critical equation in $\K^+$. It follows from Theorem \ref{PS-dichotomy} that for 4D quadratic Klein-Gordon equation (\ref{main}), $\K^+ \subset \K_{1,0}^+$. 

Now, we can prove blow-up result in $\K^-$ for 4D quadratic Klein-Gordon equation (\ref{main}).
\begin{prop}
	Suppose that $d=4$. If $\normo{\uo}_{2} > \normo{Q}_2$ and $E(u_{0},u_{1}) < E(Q,0)$, the solution to (\ref{main}) blows up in finite time.
\end{prop}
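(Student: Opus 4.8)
The plan is to reduce this to the Payne--Sattinger dichotomy already proved in Theorem~\ref{PS-dichotomy}, by showing that the hypotheses force $\brk{\uo,\ua}\in\K_{1,0}^{-}$; the blow-up then follows verbatim from that theorem, applied with $f(u)=u^{2}$ and $d=4$. Two things are immediate from what precedes. From the energy lower bound $E(\uo,\ua)\goe\half K(\uo)+\half\normo{\uo}_{2}^{2}$, the identity $\normo{Q}_{2}^{2}=2E(Q,0)$, and the hypotheses $\normo{\uo}_{2}>\normo{Q}_{2}$ and $E(\uo,\ua)<E(Q,0)$, we get $K(\uo)<2\brk{E(\uo,\ua)-E(Q,0)}<0$, so the data indeed lies in $\K^{-}$; and $J(\uo)=E(\uo,\ua)-\half\normo{\ua}_{2}^{2}\loe E(\uo,\ua)<E(Q,0)=m_{1,0}$. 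So the only missing ingredient is $K_{1,0}(\uo)<0$.

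The heart of the matter is thus the inclusion $\K^{-}\subset\K_{1,0}^{-}$: I claim that $\normo{\uo}_{2}>\normo{Q}_{2}$ together with $J(\uo)<E(Q,0)$ already forces $K_{1,0}(\uo)<0$. Suppose not. Then $\int\uo^{3}\loe\normo{\nabla\uo}_{2}^{2}+\normo{\uo}_{2}^{2}$, while pointwise positivity of $|\uo|^{3}$ and the sharp four-dimensional Gagliardo--Nirenberg inequality give $\int\uo^{3}\loe\normo{\uo}_{3}^{3}\loe\frac{3}{2}\frac{\normo{\uo}_{2}}{\normo{Q}_{2}}\normo{\nabla\uo}_{2}^{2}$. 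Write $s=\normo{\uo}_{2}/\normo{Q}_{2}>1$ and $a=\normo{\nabla\uo}_{2}^{2}$, so that $\normo{\uo}_{2}^{2}=s^{2}\normo{Q}_{2}^{2}$, and feed whichever of the two bounds on $\int\uo^{3}$ is the smaller into $J(\uo)=\half a+\half s^{2}\normo{Q}_{2}^{2}-\frac{1}{3}\int\uo^{3}<\half\normo{Q}_{2}^{2}$. If the Gagliardo--Nirenberg bound is the smaller one, i.e.\ $(3s-2)a\loe 2s^{2}\normo{Q}_{2}^{2}$, the inequality becomes $a>(1+s)\normo{Q}_{2}^{2}$, and combining with the case hypothesis gives $(3s-2)(1+s)<2s^{2}$, i.e.\ $(s-1)(s+2)<0$. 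If instead $a+\normo{\uo}_{2}^{2}$ is the smaller bound, i.e.\ $(3s-2)a>2s^{2}\normo{Q}_{2}^{2}$, one gets $a<(3-s^{2})\normo{Q}_{2}^{2}$ together with $a>\frac{2s^{2}}{3s-2}\normo{Q}_{2}^{2}$, hence $2s^{2}<(3-s^{2})(3s-2)$, i.e.\ $(s-1)^{2}(s+2)<0$. Both conclusions fail for every $s>1$, which is the desired contradiction; hence $K_{1,0}(\uo)<0$ and $\brk{\uo,\ua}\in\K_{1,0}^{-}$.

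It then remains only to invoke Theorem~\ref{PS-dichotomy} for data in $\K_{1,0}^{-}$, which gives finite-time blow-up. I expect the only genuinely delicate point to be the variational inclusion of the second paragraph; this is where the argument of \cite{ibrahim2014threshold} enters, the underlying fact being that below the ground-state energy the sign of the constraint functional is independent of the admissible scaling used to define it. Everything else is either a direct quotation of Theorem~\ref{PS-dichotomy} or elementary manipulation of the energy, the Gagliardo--Nirenberg inequality, and the identities for $Q$.
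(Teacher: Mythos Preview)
Your proof is correct and takes a genuinely different route from the paper. The paper establishes the inclusion $\K^{-}\subset\K_{1,0}^{-}$ by a soft topological argument: since $m_{1,0}=E(Q,0)$ the two dichotomies partition the same sublevel set, $\K^{+}\subset\K_{1,0}^{+}$ is already known from the global-existence part, and the paper then shows $\K_{1,0}^{+}$ is path-connected (via the scaling $\la\mapsto\la\ph$ for $0\loe\la\loe1$), so the open partition $\K_{1,0}^{+}=(\K_{1,0}^{+}\cap\K^{+})\cup(\K_{1,0}^{+}\cap\K^{-})$ forces the second piece to be empty. Your argument is instead a direct quantitative one: you combine the sharp Gagliardo--Nirenberg bound $\int u_{0}^{3}\loe\tfrac32 s a$ with the assumed bound $\int u_{0}^{3}\loe a+s^{2}\normo{Q}_{2}^{2}$ coming from $K_{1,0}(\uo)\goe0$, feed each into $J(\uo)<\tfrac12\normo{Q}_{2}^{2}$, and extract a polynomial contradiction in $s=\normo{\uo}_{2}/\normo{Q}_{2}$. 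The paper's approach generalises more readily to other scalings $K_{\al,\be}$ and nonlinearities (one only needs connectedness of $\K_{\al,\be}^{+}$), whereas yours is self-contained and does not invoke connectedness at all. One small remark: your case split on which bound is smaller is actually unnecessary---both upper bounds on $\int u_{0}^{3}$ hold simultaneously under the contradiction hypothesis, so you may use the Gagliardo--Nirenberg bound to get $a>(1+s)\normo{Q}_{2}^{2}$ and the $K_{1,0}\goe0$ bound to get $a<(3-s^{2})\normo{Q}_{2}^{2}$ in one stroke, yielding $(s-1)(s+2)<0$ directly.
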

\begin{proof}
The idea is to prove that $\K_{1,0}^\pm = \K^\pm$. It follows from $m_{1,0}=E(Q,0)$ that 
\EQ{
\K_{1,0}^+ \cup \K_{1,0}^- =\K^+ \cup \K^-.
}
Note that $\K_{1,0}^\pm$ are two disjoint sets, and so do $\K^\pm$. From the definition, we have that $\K_{1,0}^-$ and $\K^\pm$ are open sets. Since $\K^+ \subset \K_{1,0}^+$, it suffices to prove that $\K_{1,0}^+$ is connected.


For any $\ph\ne 0$, such that $\brk{\ph,\ua}$ in $\K_{1,0}^+$, we define
\EQ{
	j_1(\la) := J(\la\ph) = \half \la^2 \brk{\normo{\nabla \ph}_2^2 + \normo{\ph}_2^2} - \frac{1}{3} \la^{3} \int \ph^3\dx,
	}
then $j'_1(\la) = \la \brk{\normo{\nabla \ph}_2^2 + \normo{\ph}_2^2} - \la^{2} \int \ph^3 \dx$. We have that $j'_1(1)\goe 0$.
If $0\loe \la \loe 1$, $j'_1(\la) \goe \la (1-\la) \brk{\normo{\nabla \ph}_2^2 + \normo{\ph}_2^2}\goe0$, so  $j_1(\la) \loe j_1(1) < J(Q) - \normo{\ua}_2^2/2$. Note that $K_{1,0}(\la\ph) = \la j'_1(\la) \goe 0$, so $\{\la\ph:0\loe\la\loe1\} \subset \K_{1,0}^+$. $\{\la \ph\}$ is a continuous orbit connecting $\ph$ and $0$, which implies that $\K_{1,0}^+$ is connected. Therefore the Proposition follows.
\end{proof}

\subsection{Virial/Morawetz estimate}
Take two functions $h(x):\bbr^d \ra \bbr^d$ and $q(x):\bbr^d\ra \bbr$. Let $h_j(x)$ be the $j$-th coordinate of the vector-valued function $h(x)$. After integrating by parts, we obtain the Morawetz identity for the general equation (\ref{general-equ}):
\EQ{
& -\pdt\brk{\int \ut\brk{h\cdot\nabla u + qu}\dx}\\
= & \sum_{j,k=1}^{d}\int \pdk u\pdk(h_j) \pdj u \dx + \half \int |u|^2 \brk{-\diff q(x)} \dx - \int q(x)G(u) \dx \\
& + \int \brk{q(x)-\half \dive h(x)} \brk{-\aabs{\ut}^2 + \aabs{\nabla u}^2 + \aabs{u}^2 - F(u)} \dx.
}
See \cite{nakanishi2001remarks} for a more general version of Morawetz identity for complex-valued solution.

Using suitable cut-off function, we are able to obtain a decay estimate for focusing equation in radial case, in the spirit of recent work \cite{dodson2017new-radial}:
\begin{prop}[Virial/Morawetz estimate] \label{vi-mo}
	Let $d=4$, $p = 1$, and $u(t,x)\in C\brk{\R:H^{1}}$ be a solution of (\ref{main}) with initial data $\brk{\uo,\ua}\in \hra \times \lra$. Suppose that $E:=E(\uo,\ua)>0$ and $\normo{u(t)}_{H^1} + \normo{\ut(t)}_2 \sim E$. If there exists $A<1$ such that $\normo{u(t)}_2 \loe A\normo{Q}_2$ for all $t\in \R$, we have
	\begin{equation*}
		\int_{T}^{2T} \int_{\aabs{x}\loe R} |u|^{3} \dx \dt \loe C(E,A) \brk{R + T R^{-3/2}},
	\end{equation*}
for any $T>0$, $R>0$.
\end{prop}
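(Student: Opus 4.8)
The plan is to run a localised virial/Morawetz argument in the spirit of \cite{dodson2017new-radial}: I would feed into the Morawetz identity displayed just above the statement the radial vector field $h=\nabla\phi_R$ together with $q=\tfrac12\De\phi_R$, for a fixed smooth radial weight $\phi_R$ engineered so that the resulting boundary term is $O(R)$ while every interior tail error decays at least like $R^{-3/2}$. Concretely I would take $\phi_R\in C^\infty(\bbr^4)$ radial with $\phi_R(x)=\tfrac12|x|^2$ on $\{|x|\le R\}$, $\phi_R''$ nonincreasing on $[R,4R]$ with $\phi_R''\equiv1$ on $[0,R]$, $\phi_R''\ge c_0:=\tfrac{1+A}{2}$ on $[R,2R]$, and $\phi_R''\equiv0$ on $[4R,\infty)$; then $0\le\phi_R''\le1$, $0\le\phi_R'\le4R$, $0\le\De\phi_R\lsm1$, and $\De^2\phi_R$ is supported in $\{|x|\ge R\}$ with $\int|u|^2|\De^2\phi_R|\lsm_E R^{-2}$. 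With $M_R(t):=\int_{\bbr^4}\ut\,(\nabla\phi_R\cdot\nabla u+\tfrac12\De\phi_R\,u)\dx$, and noting $F(u)=G(u)=\tfrac13u^3$ and that the last term of the Morawetz identity drops out for $q=\tfrac12\dive h$, one gets (first for smooth solutions, then for general $H^1\times L^2$ data by approximation and continuity of the flow)
\[
-\pdt M_R(t)=\int(\nabla u)^{T}\nabla^2\phi_R\,\nabla u\dx-\tfrac16\int\De\phi_R\,u^{3}\dx-\tfrac14\int|u|^{2}\De^2\phi_R\dx=:\mathcal{D}(t).
\]

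The heart of the argument is the coercivity estimate $\mathcal{D}(t)\ge c(A)\int_{|x|\le R}|u(t)|^{3}\dx-C(E,A)R^{-3/2}$. For the tails I would use the radial Sobolev (Strauss) bound $\|u(t)\|_{L^\infty(|x|\ge R)}\lsm_E R^{-3/2}$, which gives $\int_{|x|>R}|u|^{3}\lsm_E R^{-3/2}$; together with the support/size of $\De^2\phi_R$ and $\De\phi_R\equiv4$ on $\{|x|\le R\}$, this reduces $\mathcal{D}$ to $\int(\nabla u)^{T}\nabla^2\phi_R\nabla u-\tfrac23\int_{|x|\le R}u^{3}$ up to $O_E(R^{-3/2})$. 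Since $u$ is radial, $(\nabla u)^{T}\nabla^2\phi_R\nabla u=\phi_R''(|x|)\,|\nabla u|^{2}\ge0$, and $\phi_R''\ge c_0$ on $\{|x|\le2R\}$, so $\int(\nabla u)^{T}\nabla^2\phi_R\nabla u\ge c_0\int_{|x|\le2R}|\nabla u|^{2}$. For the nonlinear term I would apply the sharp Gagliardo--Nirenberg inequality (in the refined form recalled above) to $g=\chi u$, $\chi$ a radial cutoff with $\chi\equiv1$ on $\{|x|\le R\}$, $\supp\chi\subset\{|x|\le2R\}$, $|\nabla^2\chi|\lsm R^{-2}$; integrating by parts the cross term via $\|\nabla(\chi u)\|_2^{2}=\int\chi^{2}|\nabla u|^{2}-\int\chi\De\chi\,u^{2}$ and using $\|\chi u\|_2\le\|u\|_2\le A\|Q\|_2<\|Q\|_2$, one obtains $\int_{|x|\le R}|u|^{3}\le\tfrac{3A}{2}\big(\int_{|x|\le2R}|\nabla u|^{2}+C_E R^{-2}\big)$, hence $\tfrac23\int_{|x|\le R}u^{3}\le A\int_{|x|\le2R}|\nabla u|^{2}+O_E(R^{-2})$. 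Combining gives $\mathcal{D}\ge(c_0-A)\int_{|x|\le2R}|\nabla u|^{2}-C_E R^{-3/2}$, and feeding the same Gagliardo--Nirenberg bound back in, together with $c_0-A=\tfrac{1-A}{2}>0$, yields the claimed coercivity with $c(A)=\tfrac{1-A}{3A}$.

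To finish I would note that $\|\nabla\phi_R\|_{L^\infty}\lsm R$ and $\|\De\phi_R\|_{L^\infty}\lsm1$, so by $\|u(t)\|_{H^1}+\|\ut(t)\|_{2}\sim E$ one has $|M_R(t)|\le\|\ut\|_2\big(\|\nabla\phi_R\|_\infty\|\nabla u\|_2+\tfrac12\|\De\phi_R\|_\infty\|u\|_2\big)\lsm_E R$ uniformly in $t$; integrating $\mathcal{D}=-\pdt M_R$ over $[T,2T]$ then gives
\[
\tfrac{1-A}{3A}\int_T^{2T}\!\int_{|x|\le R}|u|^{3}\dx\dt\le M_R(T)-M_R(2T)+C(E,A)TR^{-3/2}\lsm_{E,A}R+TR^{-3/2},
\]
which is the assertion (the case $R<1$ is trivial since there $TR^{-3/2}\gtrsim T\gtrsim_E\int_T^{2T}\|u(t)\|_3^3\dt$). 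The main obstacle is the coercivity step: unlike for NLS, the localised Morawetz density only controls $\int_{|x|\le2R}|\nabla u|^{2}$ rather than the full $\|\nabla u\|_2^{2}$, so the global bound $K(u)\ge(1-A)\|\nabla u\|_2^{2}$ cannot be used directly; the point is to take $\phi_R$ convex with Hessian bounded below by $c_0>A$ on the doubled ball and to pass from the nonlinearity to the gradient term by a \emph{localised} Gagliardo--Nirenberg inequality, with all cutoff and commutator errors absorbed into $O_E(R^{-3/2})$ by the radial decay. A secondary, routine point is the justification of the Morawetz identity at $H^1\times L^2$ regularity.
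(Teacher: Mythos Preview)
Your proof is correct and follows the same localized virial/Morawetz scheme as the paper: both take $q=\tfrac12\dive h$, obtain coercivity via the sharp Gagliardo--Nirenberg inequality applied to a cutoff of $u$ (using $\|u\|_2\le A\|Q\|_2$), and control all exterior contributions by the radial Sobolev bound $\|\,|x|^{3/2}u\,\|_{L^\infty}\lsm_E 1$. The one technical difference is the weight: the paper takes $\varphi'(r)=\chi_R^2(r)$, so for radial $u$ the Hessian term is exactly $\int\chi_R^2|\nabla u|^2=\int|\nabla(\chi_R u)|^2+O_E(R^{-2})$ and the \emph{same} cutoff $\chi_R$ feeds directly into Gagliardo--Nirenberg (no $A$-dependence in the weight); you instead demand $\phi_R''\ge c_0=\tfrac{1+A}{2}>A$ on the doubled ball and bring in a separate cutoff $\chi$, comparing $c_0\int_{|x|\le 2R}|\nabla u|^2$ against the Gagliardo--Nirenberg output $A\int_{|x|\le 2R}|\nabla u|^2$. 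The paper's choice is slightly more economical, yours stays closer to the Dodson--Murphy NLS template, but the two are interchangeable.
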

\begin{proof}
	Take a cut-off function $\chi(r)\in C_{0}^{\infty}\brk{[0,\infty)}$ such that $\chi(r)=1$ if $r\loe1$ and $\chi(r)=0$ if $r>2$.  $\chi(R^{-1}r)$ is denoted by $\chir(r)$. 
	Let $$\ph(r) = \int_{0}^{r} \chir^{2}(s) \ds$$ and $$\Psi(x) = \frac{x}{\aabs{x}} \ph(\aabs{x}).$$
	By simple computations, we have
	\begin{align*}
		\pdk \Psi_{j} = & \de_{jk} \frac{\ph(\aabs{x})}{\aabs{x}} + \frac{\xj\xk}{\aabs{x}^{2}} \brk{\ph'(\aabs{x})-\frac{\ph(\aabs{x})}{\aabs{x}}}, \\
		\dive\Psi = & \frac{d-1}{\aabs{x}} \ph(\aabs{x}) + \ph'(\aabs{x}), \\
		\diff \dive \Psi = & \ph'''(\aabs{x}) + \frac{2(d-1)}{|x|} \ph''(|x|) - \frac{(d-1)(d-3)}{|x|^2} \brk{\frac{\ph(|x|)}{|x|} - \ph'(|x|)}, \\
		\ph'(\aabs{x}) = & \chir^{2}\brk{\aabs{x}}.
	\end{align*}
	Let $h(x) = \Psi(x)$ and $q = \half \dive h(x)$ in the Morawetz identity, then
	\begin{align*}
		& -\pdt\brk{\int \ut \brk{\Psi\cdot \nabla u + \frac{1}{2}\dive(\Psi)u} } \dx \\
		= & \sum_{j,k=1}^{d}\int u_{k} \pdk \Psi_{j} u_{j} \dx - \frac{1}{4} \int \diff\dive\Psi |u|^{2} \dx + \int \frac{1}{2}\dive(\Psi) G(u)\\
		= & \int \brk{\frac{\ph}{\aabs{x}} \aabs{\nabla u}^2 + \brk{\ph' - \frac{\ph}{\aabs{x}}} \aabs{\frac{x}{|x|}\cdot \nabla u}^2} \dx + \int \brk{\frac{d-1}{2} \frac{\ph}{\aabs{x}} + \half\ph'} G(u) \dx\\
		& -\frac{1}{4} \int \brk{\ph'''(\aabs{x}) + \frac{2(d-1)}{|x|} \ph''(|x|) - \frac{(d-1)(d-3)}{|x|^2} \brk{\frac{\ph(|x|)}{|x|} - \ph'(|x|)}} |u|^2 \dx \\
		= & \int \ph' \aabs{\nabla u}^2 \dx + \frac{d}{2} \int \ph' G(u) \dx + \frac{d-1}{2} \int \brk{\frac{\ph}{\aabs{x}} - \ph'} G(u) \dx \\
		& -\frac{1}{4} \int \brk{\ph'''(\aabs{x}) + \frac{2(d-1)}{|x|} \ph''(|x|) - \frac{(d-1)(d-3)}{|x|^2} \brk{\frac{\ph(|x|)}{|x|} - \ph'(|x|)}} |u|^2 \dx.
	\end{align*}
	From the definition of $\ph$, we have that the cut-off function $\ph(r)/r-\ph'(r)=0$ if $r\loe R$, and
	$0\loe \ph/r-\ph' \loe R/r$
	if $r\goe R$. Therefore, we can estimate easily
	\EQ{
	& \aabs{\int \brk{\ph'''(\aabs{x}) + \frac{6}{|x|} \ph''(|x|) - \frac{3}{|x|^2} \brk{\frac{\ph(|x|)}{|x|} - \ph'(|x|)}} |u|^2 \dx} \loe \frac{1}{R^2} C(E).
	}
	The Virial/Morawetz quantity is denoted by \EQ{
		M(t):=-\brk{\int \ut \brk{\Psi\cdot \nabla u + \half \dive(\Psi)u} } \dx,
	}
	and it is easy to see that $|M(t)| \lsm R.$ Therefore, noting that $G(u) = p u^{p+2}/(p+2)$,
	\begin{align*}
		\pdt M(t) \goe & \int \chir^{2} \brk{|\nabla u|^{2} + \frac{d}{2} G(u)} \dx  - C \int \aabs{\frac{\ph}{\aabs{x}} - \ph'} |u|^{p+2} \dx - C(E)\frac{1}{R^{2}}.
	\end{align*}
	In order to deal with the main term, since $\normo{\chir u}_2 \loe \normo{u}_2 \loe A \normo{Q}_2$, by Gagliardo-Nirenberg inequality, 
	\EQ{
	\normo{\nabla \brk{\chir u}}_{2}^{2} - \frac{2}{3} \int \aabs{\chir u}^{3}\dx \goe \brk{1 - \frac{\normo{\chir u}_{2}}{\normo{Q}_{2}}} \normo{\nabla \brk{\chir u}}_{2}^{2}.
	}
	We also have radial Sobolev inequality
	$$ \normo{  \aabs{x}^{3/2}|u| }_{L_{x}^{\infty}(\R^4)}\loe C(E).$$
	Therefore,
	\begin{align*}
		\pdt M(t) \goe & \int \chir^{2} \brk{|\nabla u|^{2}- \frac{dp}{2\brk{p+2}}|u|^{p+2}} \dx \\
		& - C\int_{\aabs{x}\goe R} \frac{R}{r} |u|^{p+2} \dx - \frac{1}{R^{2}} C(E) \\
		\goe & \int  (|\nabla \brk{\chir u}|^{2}- \frac{2}{3}|\chir u|^{3}) \dx \\
		& - C\int_{\aabs{x}\goe R} |u|^{3} \dx - \frac{1}{R^{2}} C(E) \\
		\goe & C(A) \int  |\chir u|^{3}\dx  -  \frac{C(E)}{R^{3/2}}\int_{\aabs{x}\goe R} |u|^{2} \dx - \frac{1}{R^{2}} C(E), \\
	\end{align*}
	where we use the identity
	\EQ{
	\int \chir^{2} |\nabla u|^{2} \dx = \int |\nabla (\chir u)|^{2} \dx + \int \chir \diff(\chir) |u|^2 \dx
	}
	for the second inequality. 
	Integrate in $t$ on $[T,2T]$, then the Proposition follows.
\end{proof}
\begin{cor} \label{vi-mo cor}
	Let $d=4$. Suppose that $u$ is a radial solution of quadratic Klein-Gordon equation (\ref{main}), whose initial data $\brk{\uo,\ua}$ satisfies $\normo{u_{0}}_{2} < \normo{Q}_{2},E(u_{0},u_{1}) < E(Q,0).$
	Define that $E:=E(u_{0},u_{1})>0$, then for any $\epo>0$, $T>1$ and $\ta >0$, there exists $T_{0}=T_{0}(\epo,T,E)\goe T$, such that
	\begin{equation} \label{vi-mo esti}
		\int_{T_{0}}^{T_{0}+\tau} \int |u(t,x)|^{3}\dx\dt \loe \epo.
	\end{equation}
\end{cor}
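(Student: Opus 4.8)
The plan is to deduce the corollary from the Virial/Morawetz estimate of Proposition \ref{vi-mo} by an averaging argument over a long time window, after separately controlling the spatial tail of $\normo{u(t)}_3^3$ uniformly in $t$.

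First I would check the hypotheses of Proposition \ref{vi-mo}. Since $\normo{u_0}_2 < \normo{Q}_2$ and $E(u_0,u_1) < E(Q,0)$, the variational analysis above (the $L^2$-critical variation proposition preceding Proposition \ref{vi-mo}, together with the global well-posedness in $\K^+$) provides a constant $A = A(E) < 1$ with $\normo{u(t)}_2 \loe A\normo{Q}_2$ for all $t$, the coercivity $E \sim \normo{u(t)}_{H^1}^2 + \normo{u_t(t)}_2^2$, and in particular that the solution is global with $u \in C(\R:H^1)$ and $\normo{u(t)}_{H^1} \loe C(E)$ for every $t$; here we also use the standing hypothesis $E = E(u_0,u_1) > 0$. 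Proposition \ref{vi-mo} then gives, for all $S>0$ and $R>0$,
\[
\int_S^{2S}\int_{\abs{x}\loe R}\abs{u(t,x)}^3\dx\dt \loe C(E,A)\brk{R + S R^{-3/2}}.
\]
For the exterior region I would invoke the radial Sobolev bound $\normo{\abs{x}^{3/2}u(t)}_{L^\infty(\R^4)} \loe C(E)$ (already used inside the proof of Proposition \ref{vi-mo}): for $\abs{x}\goe R$ one has $\abs{u(t,x)} \loe C(E) R^{-3/2}$, hence
\[
\int_{\abs{x}\goe R}\abs{u(t,x)}^3\dx \loe C(E) R^{-3/2}\normo{u(t)}_2^2 \loe C(E) R^{-3/2}
\]
uniformly in $t$.

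Next I would run a pigeonhole argument. Fix $\tau>0$ and $S \goe 2\tau$, and split $[S,2S]$ into $N = \lfloor S/\tau\rfloor \goe S/(2\tau)$ disjoint subintervals of length $\tau$. Summing the truncated Virial/Morawetz bound over these subintervals, at least one of them, say $[T_0,T_0+\tau] \subset [S,2S]$, satisfies
\[
\int_{T_0}^{T_0+\tau}\int_{\abs{x}\loe R}\abs{u}^3\dx\dt \loe \frac{2\tau}{S}\,C(E,A)\brk{R + S R^{-3/2}} = C(E,A)\brk{\frac{\tau R}{S} + \frac{\tau}{R^{3/2}}}.
\]
Integrating the exterior estimate over $[T_0,T_0+\tau]$ and adding gives
\[
\int_{T_0}^{T_0+\tau}\int_{\R^4}\abs{u}^3\dx\dt \loe C(E,A)\brk{\frac{\tau R}{S} + \frac{\tau}{R^{3/2}}}.
\]

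Finally I would fix the parameters in the correct order: given $\epo>0$, first choose $R = R(\epo,\tau,E)$ so large that $C(E,A)\tau R^{-3/2} \loe \epo/2$, and then choose $S = S(\epo,\tau,E) \goe \max\fbrk{T,2\tau}$ so large that $C(E,A)\tau R/S \loe \epo/2$. Since $T_0 \goe S \goe T$, the resulting $T_0$ (which also depends on $\tau$) satisfies \eqref{vi-mo esti}. I do not expect a genuine obstacle here; the only points that need care are that Proposition \ref{vi-mo} controls only the truncated integral $\int_{\abs{x}\loe R}$ with an error term growing linearly in $R$, so the tail must be absorbed using the $t$-uniform radial Sobolev bound rather than the Morawetz quantity, and that $R$ must be selected before the averaging length $S$ so that the two error terms can be balanced against $\epo$.
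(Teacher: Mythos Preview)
Your argument is correct. The verification of the hypotheses of Proposition~\ref{vi-mo}, the uniform tail bound via the radial Sobolev inequality, and the pigeonhole step on $[S,2S]$ all go through as written; the final choice of parameters (first $R$, then $S\ge\max\{T,2\tau\}$) closes the estimate.

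Your route differs from the paper's. The paper couples the truncation radius to the time scale by setting $R=T^{2/5}$, which balances the two error terms in Proposition~\ref{vi-mo} and yields the weighted bound $\int_T^{2T} t^{-1}\int|u|^3\,dx\,dt\lesssim T^{-3/5}$; summing over dyadic intervals gives $\int_T^\infty t^{-1}\int|u|^3\,dx\,dt<\infty$, and then the divergence of $\sum_k (T+(k+1)\tau)^{-1}$ forces some window $[T_0,T_0+\tau]$ to have small $L^3$ mass. Your argument keeps $R$ as a free parameter, applies the Virial/Morawetz bound once on $[S,2S]$, and finds the good window by a direct pigeonhole; this avoids the weighted integral and the dyadic summation entirely. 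The paper's approach has the byproduct of a quantitative decay rate for the time-averaged $L^3$ norm, while yours is more elementary and isolates exactly what is needed for the corollary. One small remark: the dependence $T_0=T_0(\epo,T,E)$ in the statement suppresses the dependence on $\tau$; you correctly note that your $T_0$ depends on $\tau$ as well, and the same is true in the paper's proof.
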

\begin{proof}
	In this case, $G(u)=u^3/3$. First, by variation, it follows from $(\uo,\ua)\in\K^+$ that the assumptions in Proposition \ref{vi-mo} hold with $A=C(E)$. Taking $R= T^{2/5}$, we have
	\begin{equation*}
		\int_{T}^{2T} \int_{\aabs{x}\loe T^{\frac{2}{5}}} |u(t,x)|^{3} \dx \dt \loe C(E) T^{\frac{2}{5}},
	\end{equation*}
and then
	\begin{equation*}
	\int_{T}^{2T} t^{-1}\int_{\aabs{x}\loe t^{\frac{2}{5}}} |u(t,x)|^{3} \dx \dt \loe C(E) T^{-\frac{3}{5}}.
	\end{equation*}
	By radial Sobolev inequality, we have
	\EQn{
		\int_{\aabs{x} \goe |t|^{2/5}} |u(t,x)|^{3} \dx \loe & C |t|^{-\frac{3}{5}} \int_{\aabs{x} \goe |t|^{2/5}} |u(t,x)|^{2} |\aabs{x}^{\frac{3}{2}}u| \dx \\
		\loe & C(E) |t|^{-\frac{3}{5}}.
	}
	Therefore,
	\EQn{
	\int_{T}^{2T} t^{-1}\int |u(t,x)|^{3} \dx \dt \loe C(E) T^{-\frac{3}{5}}.
	}
	
	Summation the above integral over $[2^{k}T,2^{k+1}T]$ for $k=0,\ 1,\ 2,\ ...$  yeilds
	\begin{equation*}
	\int_{T}^{\infty} t^{-1}\int |u(t,x)|^{3} \dx \dt \loe C(E) T^{-\frac{3}{5}}.
	\end{equation*}
	For any fixed $\ta>0$, divide the above integral into $[T + k \ta, T + \brk{k+1} \ta]$ for non-negative integer $k$, i.e.
	\begin{equation*}
	\sum_{k\in \mathbb{N}} \frac{1}{T + \brk{k+1} \ta} \int_{T + k \ta}^{T + \brk{k+1} \ta} \int |u(t,x)|^{3} \dx \dt \loe C(E).
	\end{equation*}
	Since the series
	\begin{equation*}
	\sum_{k=0}^{+\I} \frac{1}{T + \brk{k+1} \ta}
	\end{equation*}
	diverges, there exists a $\Ta = T + k_0 \ta$ such that
	\begin{equation*}
	\int_{T_{0}}^{T_{0}+\tau} \int |u(t,x)|^{3}\dx\dt \loe \epo,
	\end{equation*}
	and the Corollary follows.
\end{proof}

\section{Large data scattering in 4D case}
\subsection{\texorpdfstring{$L^3$}{L cubic} decay after large time}
Corollary \ref{vi-mo cor} yields that localised $L_{x}^{3}$ norm of $u$ decays on arbitrarily large time interval, which is not sufficient for large data scattering. After normal form reduction, we need $L^{3}$ decay of $U= u-i\<D\>^{-1} \ut$ to establish the space-time bound. Now, we go back to the first order equation (\ref{1order}):
\begin{equation*}
	U(t,x) = K(t) U_{0}(x) - i \int_{0}^{t} K(t-s) \<D\>^{-1} \brk{u(s,x)^{2}} \ds.
\end{equation*}
\begin{prop} \label{l3-decay}
	Let $d=4$. Suppose that $u$ is a radial solution of quadratic Klein-Gordon equation (\ref{main}), whose initial data $\brk{\uo,\ua}$ satisfies $\normo{u_{0}}_{2} < \normo{Q}_{2},E(u_{0},u_{1}) < E(Q,0)$. For any $\epa>0$ and $T>0$, there exists $\taa=\taa(E,\epa)\goe C(E) \epa^{-5}$ and $T_{1} = T_{1}(E,\epa,T) $,  such that $T<T_{1}-\taa$, and
	\begin{equation}
		\sup_{t\in [T_{1}-\taa, T_{1}]} \normo{U(t,x)}_{L^{3}_{x}} \loe \epa.
	\end{equation}
\end{prop}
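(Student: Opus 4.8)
The plan is to represent $U$ via a Duhamel formula based at a time $T_0$ lying deep inside a Virial/Morawetz window, and to control separately the linear piece (by a space--time estimate) and the nonlinear piece (by dispersion). Throughout we use that, since $(\uo,\ua)$ lies below $(Q,0)$, the variational analysis of Section 3 gives a global solution with $\normo{u(t)}_{H^1}+\normo{\pdt u(t)}_2\sim E$ uniformly and $\normo{u(t)}_2\loe A\normo{Q}_2$ for some $A<1$; thus Corollary \ref{vi-mo cor} applies, $\normo{U(t)}_{H^1}\lsm_E1$, and $U(t)=K(t-a)U(a)-i\int_a^tK(t-s)\<D\>^{-1}u(s)^2\ds$ for every $a$.

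\textbf{Linear and nonlinear tools.} First I would record four linear facts on $\bbr^4$: (i) the uniform bound $\normo{K(t)f}_{L^3_x}\lsm\normo{f}_{H^{2/3}}$; (ii) the Klein--Gordon dispersive estimate, which after interpolation with the $L^2$ isometry reads $\normo{K(t)\<D\>^{-1}h}_{L^3_x}\lsm|t|^{-2/3}\normo{h}_{L^{3/2}_x}$ for $|t|\goe1$, the factor $\<D\>^{-1}$ absorbing the derivative loss exactly because $(d+2)(\tfrac12-\tfrac13)=1$ when $d=4$; (iii) the Strichartz estimate $\normo{K(\cdot)\psi}_{L^5_tL^3_x(\bbr)}\lsm\normo{\psi}_{H^1}$ (the pair $(5,3)$ is admissible in dimension $4$ with Sobolev exponent $7/15<1$); and (iv) the quantitative continuity $\normo{(K(\de)-I)\ph}_{L^3_x}\lsm|\de|^{1/3}\normo{\ph}_{H^1}$, from $|e^{ix}-1|\lsm|x|^{1/3}$ and Plancherel. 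Feeding (i)--(ii) into the Duhamel integral, together with the Sobolev chain $\normo{\<D\>^{-1}(u^2)}_{H^{2/3}}\lsm_E\normo{u}_{L^3_x}$ (using $\<D\>^{-1/3}:L^{12/7}\to L^2$ and $\normo{u}_{L^{24/7}}\lsm\normo{u}_{L^3}^{1/2}\normo{u}_{L^4}^{1/2}$), the identity $\normo{u^2}_{L^{3/2}_x}=\normo{u}_{L^3_x}^2$, and Hölder in time, I obtain the key nonlinear bound: whenever $\int_a^b\normo{u(s)}_{L^3_x}^3\ds\loe\epo$,
\EQ{
\sup_{t\in[a,b]}\normo{\int_a^tK(t-s)\<D\>^{-1}u(s)^2\ds}_{L^3_x}\lsm_E\epo^{1/3},
}
\emph{uniformly in the window length} $b-a$ (the tail integral $\int_1^\I r^{-2}\,dr$ converges); this uniformity will be crucial.

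\textbf{Assembly.} Fix $\taa:=C(E)\epa^{-5}$, choose $\epo=\epo(E,\epa)$ so small that the nonlinear bound above is $\loe\epa/2$, and set a window length $\Lambda:=C(E)\epa^{-13}$. Corollary \ref{vi-mo cor} (with $T$ enlarged if necessary) supplies $T_0>T$ with $\int_{T_0}^{T_0+\Lambda}\normo{u}_{L^3_x}^3\loe\epo$. Put $g(t):=\normo{K(t-T_0)U(T_0)}_{L^3_x}$ on $[T_0,T_0+\Lambda]$; translating time and using (iii), $\int_{T_0}^{T_0+\Lambda}g^5\loe\int_\bbr\normo{K(t)U(T_0)}_{L^3_x}^5\,dt\lsm\normo{U(T_0)}_{H^1}^5\loe C(E)$, while (iv) shows $g$ is $\tfrac13$-Hölder with constant $C(E)$. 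Partition $[T_0,T_0+\Lambda]$ into $M:=\Lambda/\taa$ consecutive blocks of length $\taa$; at least $M/2$ of them satisfy $\int_{B_i}g^5\loe2C(E)/M$, and for such a block, writing $M_i:=\sup_{B_i}g$, the Hölder bound forces $g\goe M_i/2$ on a sub-interval of $B_i$ of length $\sim(M_i/C(E))^3$ (which is $\loe\taa$ since $\taa\gg\epa^3$), whence $M_i^8\lsm C(E)/M$, i.e. $M_i\loe C(E)M^{-1/8}$. Since $M=C(E)\epa^{-8}$, some block $B_i=:[T_1-\taa,T_1]\subset[T_0,T_0+\Lambda]$ has $\sup_{B_i}g\loe\epa/2$; for $t\in B_i$, the Duhamel identity based at $T_0$ and the key bound give $\normo{U(t)}_{L^3_x}\loe g(t)+\normo{\int_{T_0}^tK(t-s)\<D\>^{-1}u^2\ds}_{L^3_x}\loe\epa/2+\epa/2=\epa$, and $T_1-\taa\goe T_0>T$.

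\textbf{Main obstacle.} The crux is the linear piece $\normo{K(t-T_0)U(T_0)}_{L^3_x}$. Because $e^{it\<D\>}$ is an isometry on $H^1$ but not on $L^3$, the uniform bound $\normo{U(T_0)}_{H^1}\sim E$ gives no pointwise-in-$t$ smallness; and, as noted in the introduction, the Virial/Morawetz bound controls $u$ but nothing about $\<D\>^{-1}\pdt u$, so $\normo{U(T_0)}_{L^3_x}$ need not be small either. Following Cazenave, the remedy is to drop the pointwise demand and only ask for control on a long sub-interval, extracted from the $L^5_tL^3_x$ bound; this is precisely why $\taa$ must have size $\epa^{-5}$ and why the continuity estimate (iv) is needed, to upgrade ``$g$ small in $L^5_t$ over a huge interval'' into ``$g\loe\epa/2$ on a genuine block of length $\taa$''. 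The remaining steps are the dispersive/Sobolev bookkeeping in (i)--(iv) and checking that the nonlinear bound is uniform in $b-a$.
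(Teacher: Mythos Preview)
Your argument is correct, but it follows a genuinely different path from the paper's.

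The paper writes the Duhamel formula based at time $0$ and splits the estimate into three pieces: the free evolution $\normo{K(t)U_0}_{L^3_x}$, a ``far past'' Duhamel integral $\int_0^{t-\taa}$, and a ``recent past'' Duhamel integral $\int_{t-\taa}^t$. The free piece tends to $0$ as $t\to\infty$ by the same mechanism you use (finite $L^q_tL^3_x$ Strichartz norm plus Lipschitz continuity in $t$), but invoked softly: one simply waits until some large $\widetilde T$. The far--past piece is rewritten as $K(\taa)U(t-\taa)-K(t)U_0$ and estimated by interpolating between an $L^1\to L^\infty$ dispersive bound (giving $\taa^{-1}$) and the uniform $H^1$ bound, yielding $C(E)\taa^{-1/3}$; this is where $\taa\sim\epa^{-3}$ enters (the statement's $\epa^{-5}$ is deliberately larger than needed). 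Only the recent--past piece uses the Virial/Morawetz window, on an interval of length $2\taa$.

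Your route instead bases Duhamel at the Morawetz time $T_0$, which eliminates the far--past piece altogether: the entire nonlinear integral lives inside the window and is bounded uniformly in its length by your key estimate, thanks to the convergent tail $\int_1^\infty r^{-2}\,dr$ after H\"older. The price is that the linear piece $g(t)=\normo{K(t-T_0)U(T_0)}_{L^3_x}$ is no longer known to decay, so you need the quantitative pigeonhole--plus--H\"older argument on an interval of length $\sim\epa^{-13}$ to locate a good block of length $\taa$. This trades the paper's Besov interpolation step for a combinatorial search, and requires a much longer Morawetz window (still available from Corollary~\ref{vi-mo cor}), but has the virtue that every constant is explicit and the argument does not rely on the reference time being $0$. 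Both proofs share the same underlying idea you correctly identify as the crux: the linear term cannot be made small pointwise from $H^1$ alone, so one uses a space--time Strichartz bound together with temporal continuity to find a sub-interval where it is small.
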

\begin{proof}
	Take a large constant $\taa>0$, and $R>0$ that will be defined later. We estimate $L_{x}^{3}$ norm of $U(t,x)$, and divide it into three parts
	\begin{align}
		\normo{U(t,x)}_{L_{x}^{3}} \loe & \normo{K(t) U_{0}(x)}_{3} \label{l3esti-1}\\
		& + \normo{\int_{0}^{t-\taa} K(t-s) \<D\>^{-1} \brk{u(s,x)^{2}} \ds}_{L_{x}^{3}} \label{l3esti-2}\\
		& + \int_{t-\taa}^{t} |t-s|^{-\frac{2}{3}} \normo{u(s,x) }_{L^{3}}^{2} \ds \label{l3esti-3}
	\end{align}
	
	First, we bound (\ref{l3esti-1}). Let $v(t):= K(t)U_{0}$. From radially improved Strichartz estimates, for any $2<q<3$, we have
	\begin{equation*}
		\normo{v(t)}_{L_{t}^{q}L_{x}^{3}} \loe C(E).
	\end{equation*}
	Note that $\normo{\pdt v(t,x)}_{L^2} \lsm \normo{v(t,x)}_{H^{1}_{x}}$, then $v(t)$ is Lipschitz continuous from $\bbr \rightarrow L^{3}_{x}$. Thus, we must have
	\begin{equation} \label{l3-1}
		\normo{K(t) U_{0}(x)}_{3} \rightarrow 0,
	\end{equation} 
	when $t \rightarrow \pm \infty$.
	
	Note that
	\begin{equation*}
		\int_{0}^{t-\taa} K(t-s) \<D\>^{-1} \brk{u(s,x)^{2}} \ds = K(t-t+\taa) U(t-\taa) - K(t) U_{0},
	\end{equation*}
	and for any small $0<\de\ll 1$,
	\EQn{
		& \normo{\int_{0}^{t-\taa} K(t-s) \<D\>^{-1} \brk{u(s,x)^{2}} \ds}_{B_{\I,2}^{-2 + \de}} \\ \loe & C \int_{0}^{t-\taa} |t-s|^{-2} \normo{ \<D\>^{-1} \brk{u(s,x)^{2}}}_{B_{1,2}^{1+\de}} \ds \\
		\loe & C \int_{0}^{t-\taa} |t-s|^{-2} \normo{u}_{H^1}^{2} \ds \loe C(E) \taa^{-1}.
	}
	We can also use the interpolation
	\EQn{
	\norm{\eqref{l3esti-2}}_{L_x^3} \lsm \norm{\eqref{l3esti-2}}_{B_{3,2}^{0}} \lsm \norm{\eqref{l3esti-2}}_{B_{\I,2}^{-2+\de}}^{1/3} \norm{\eqref{l3esti-2}}_{B_{2,2}^{1-\de/2}}^{2/3}.
	}
	Therefore, (\ref{l3esti-2}) can be bounded by
	\EQn{
		& \normo{\int_{0}^{t-\taa} K(t-s) \<D\>^{-1} \brk{u(s,x)^{2}} \ds}_{L_{x}^{3}} \\
		& \loe C \normo{\int_{0}^{t-\taa} K(t-s) \<D\>^{-1} \brk{u(s,x)^{2}} \ds}_{B_{\I,2}^{-2+\de}}^{\frac{1}{3}} \normo{K\brk{\taa} U\brk{t-\taa} - K(t) U_{0}}_{H^1}^{\frac{2}{3}} \\
		&\loe C(E) \taa^{-\frac{1}{3}}.
	}
	For any $\epa>0$ and $T>0$, there exists $\widetilde{T} = \widetilde{T}(\epa,T) > T$ and  $\taa=\taa(E,\epa)\goe C(E) \epa^{-5}$, such that for any $t>\widetilde{T}$, 
	\begin{equation*}
		\brk{\ref{l3esti-1}}+ \brk{\ref{l3esti-2}} \loe \frac{1}{2} \epa.
	\end{equation*}
	
	Finally, 
	\begin{align*}
		\int_{t-\taa}^{t} |t-s|^{-\frac{2}{3}} \normo{u(s,x)}_{L^{3}}^{2} \ds \loe & C(E) \int_{t-\taa}^{t} |t-s|^{-\frac{2}{3}} \normo{u(s,x)}_{L^{3}}^{\frac{1}{3}} \ds \\
		\loe & C(E) \taa^{\frac{2}{9}} \brk{\int_{t-\taa}^{t} \int |u(s,x)|^{3} \dx\ds}^{\frac{1}{9}}. 
	\end{align*}
	By Corollary \ref{vi-mo cor}, for the above $\widetilde{T}$, take $\ta = 2 \taa$ and $\epo\loe C(E) \taa^{-2} \epa^{9}$. Therefore, there exists $\widetilde{T}_{0} = \widetilde{T}_{0}(E,\epa,T)\goe\widetilde{T}$ such that for all $t \in [\widetilde{T}_{0} + \taa, \widetilde{T}_{0} + 2\taa]$, 
	\begin{equation*}
		\brk{\ref{l3esti-3}}\loe \frac{1}{2} \epa.
	\end{equation*}
	Now we take $T_{1} = \widetilde{T}_{0} + 2\taa$, and the Proposition follows.
\end{proof}

\subsection{Proof of Theorem \ref{thm-large}}
	Let $T_{2}>1$ and $\tab>0$ will be defined later. We consider the simplified equation with non-linear term $U^2$:
	\EQ{
	U(t,x) = K(t) U_{0}(x) - i \int_{0}^{t} K(t-s) \jb{D}^{-1} \brk{U(s,x)^{2}} \ds.
	}
	The equation after normal Form reduction can be rewritten as
	\begin{align}
	U(t,x) = & K(t)\brk{U_{0} + i\<D\>^{-1}\Om(U,U)\brk{0}} \label{int-equ 1}\\
	&-i\<D\>^{-1}\Om(U,U) \label{int-equ 2}\\
	& -i\int_{0}^{T_{2}-\tab} K(t-s) \<D\>^{-1}\brk{\brk{UU}_{LH+HL+HH} +2\Om\brk{-i\<D\>^{-1}U^{2},U}} \ds \label{int-equ 3}\\
	& -i\int_{T_{2}-\tab}^{T_{2}} K(t-s) \<D\>^{-1}\brk{\brk{UU}_{LH+HL+HH} +2\Om\brk{-i\<D\>^{-1}U^{2},U}} \ds \label{int-equ 4}\\
	& -i\int_{T_{2}}^{t} K(t-s) \<D\>^{-1}\brk{\brk{UU}_{LH+HL+HH} +2\Om\brk{-i\<D\>^{-1}U^{2},U}} \ds. \label{int-equ 5}
	\end{align}
	We are going to prove for any $\ep_1>0$, there exists $T_2$ such that
	\EQ{
	\norm{U}_{\WSN(T_2,+\I)} \lsm \ep_1^{3\ep^2/2}.
	}
	
	First, we have Strichartz bound
	\begin{equation*}
	\normo{K(t)\brk{U_{0} + i\<D\>^{-1}\Om(U,U)\brk{0}}}_{\WSN(\bbr)} \lsm \normo{U_{0}}_{H^{1}} + \normo{U_{0}}_{H^{1}}^{2},
	\end{equation*}
	then for any $\epa>0$, there exists $\widetilde{T}=\widetilde{T}(\epa)>0$, such that
	\begin{equation*}
	\normo{K(t)\brk{U_{0} + i\<D\>^{-1}\Om(U,U)\brk{0}}}_{\WSN\brk{T,+\infty}} \loe \epa,
	\end{equation*}
	for all $T >\widetilde{T}$. Thus, we take some $T_2>\widetilde{T}$.
	We also have refined bound
	\begin{equation*}
	\normo{\Om(U,U)}_{\WSN\brk{T_{2},\infty}} \loe 2^{-\beta} C(E) \normo{U}_{\WSN(T_2,+\I)}.
	\end{equation*}
	
	By interpolation, 
	\begin{equation*}
	\normo{f(x)}_{\ZI} \lsm \normo{f}_{L_{t}^{\infty}L_{x}^{3}}^{1-3\de} \normo{f}_{L_{t}^{\infty}H^{1}}^{3\de}.
	\end{equation*}
	Note we also have
	\begin{align*}
	\brk{\ref{int-equ 3}} = & -i[K(t-T_{2} +\tab)\brk{U(T_{2}-\tab) + i\<D\>^{-1}\Om(U,U)(T_{2}-\tab)} \\
	& - K(t)\brk{U_{0} + i\<D\>^{-1}\Om(U,U)(0)}].
	\end{align*}
	Therefore,
	\begin{equation*}
	\normo{\brk{\ref{int-equ 3}}}_{\WSN\brk{T_{2},+\infty}} \loe \normo{\brk{\ref{int-equ 3}}}_{S\brk{T_{2},+\infty}}^{1-2\ep} \normo{\brk{\ref{int-equ 3}}}_{\brk{0,0,-\half| -\frac{4}{3} +\de}}^{2\ep}.
	\end{equation*}
	Take $q$ such that $\frac{1}{q} = \frac{1}{8} + \frac{\ep}{4}$, then by Sobolev embedding, we have
	\EQn{
	& \normo{K(t-s) \<D\>^{-1}\brk{\brk{UU}_{LH+HL+HH} +2\Om\brk{-i\<D\>^{-1}U^{2},U}} }_{\brk{\dot{B}^{-\half}_{\infty}|\dot{B}^{-\frac{3}{4}+\de}_{\infty}}} \\
	\lsm &  \normo{ \normo{ K(t-s)\pk \brk{\brk{UU}_{LH+HL+HH} +2\Om\brk{-i\<D\>^{-1}U^{2},U}} }_{\dot{B}_{q}^{\ep}}}_{l^{2}_{k \loe 0}} \\
	& + \normo{ \normo{ K(t-s)\<D\>^{-1} \pk\brk{\brk{UU}_{LH+HL+HH} +2\Om\brk{-i\<D\>^{-1}U^{2},U}} }_{\dot{B}_{\infty}^{-\frac{4}{3} +\de}}}_{l^{2}_{k \goe 0}} \\
	\lsm & |t-s|^{-2\brk{1-\frac{2}{q}}} \normo{ 2^{\ep k} \brk{\normo{U}_{2q'}^{2}+ \normo{U}_{3q'}^{3}}}_{l^{2}_{k \loe 0}} \\
	& + |t-s|^{-2} \normo{2^{\brk{\frac{2}{3}+\de}k}\norm{\pk\brk{\brk{UU}_{LH+HL+HH}+2\Om\brk{-i\<D\>^{-1}U^{2},U}}}_{L_x^1}}_{l^{2}_{k \goe 0}} \\
	\lsm & |t-s|^{-2\brk{1-\frac{2}{q}}} + |t-s|^{-2},
	}
	where in the last inequality, we use $\pk\Om\brk{-i\<D\>^{-1}U^{2},U}=0$ for large $\be>0$, and bilinear estimate
	\EQn{
	\max_{k\goe 0}2^{\frac{5}{6}k} \norm{\pk\brk{(UU)_{HH+HL+LH}}}_{L^1} \lsm & \norm{P_{\goe 0}U}_{H^{11/12}}\norm{U}_{L^2} + \norm{P_{\goe0}U}_{H^{1/2}}^2 \loe C(E).
	}
	Thus, 
	\begin{align*}
	\normo{\brk{\ref{int-equ 3}}}_{\WSN\brk{T_{2},+\infty}} \loe & \normo{\brk{\ref{int-equ 3}}}_{S\brk{T_{2},+\infty}}^{1-2\ep} \normo{\brk{\ref{int-equ 3}}}_{\brk{0,0,-\half| -\frac{4}{3} +\de}}^{2\ep} \\
	\loe & C(E) \normo{\int_{0}^{T_{2}-\tab} \brk{|t-s|^{-2\brk{1-\frac{2}{q}}} + |t-s|^{-2}} \ds }_{L^{\infty}_{t\goe T_{2}}}^{2\ep} \\
	\loe & C(E) \tab^{-\ep+2\ep^2}.
	\end{align*}
	
	Next, we estimate (\ref{int-equ 4}). From the variation result, we have
	\EQ{
	\norm{U}_{L_t^\I H_x^1}^2 \sim E.
	}
	Therefore, using radial Strichartz estimate for \eqref{1order} and Sobolev embedding $H^1\subset L^4$, for any interval $I\subset \R$,
	\begin{equation}
	\normo{U}_{\SI} \loe  C(E) + C\norm{u^2}_{L_t^1 L_x^2(I\times \R^4)} \loe C(E) \jb{\aabs{I}}.
	\end{equation}
	We also have interpolation
	\begin{equation*}
	\normo{f}_{\brk{\dot{B}^{-\half}_{\infty}|\dot{B}^{-\frac{3}{4}+\de}_{\infty}}} \lsm \normo{f}_{L^{3}}^{1-3\de}  \normo{f}_{H^{1}}^{3\de}.
	\end{equation*}
	By Proposition \ref{l3-decay}, for $\epa>0$ and the above $\widetilde{T}$, there exists $\widetilde{\tau}_{1} = C(E) \epa^{-5}$ and $T_{2}$, such that
	\begin{equation*}
	\normo{U}_{L_{t}^{\infty}\brk{T_{2}-\widetilde{\ta}_{1},T_{2}:L_{x}^{3}}} \loe \epa.
	\end{equation*}
	Take $\tab = \epa^{-3\ep/2}$.
	Note that $[T_{2}-\tab,T_{2}] \subset [T_{2}-\widetilde{\ta}_{1},T_{2}]$ for sufficiently small $\epa$, then we have
	\begin{align*}
	\normo{\brk{\ref{int-equ 4}}}_{\WSN\brk{T_{2},+\infty}} \loe & C(E)\brk{ \normo{U}_{S\brk{T_{2}-\tab,T_{2}}}^{2-4\ep}\normo{U}_{Z\brk{T_{2}-\tab,T_{2}}}^{4\ep} +\normo{U}_{S\brk{T_{2}-\tab,T_{2}}}^{3-6\ep}\normo{U}_{Z\brk{T_{2}-\tab,T_{2}}}^{6\ep}} \\
	\loe & C(E)\brk{\<\tab\>^{2-4\ep} \normo{U}_{L_{t}^{\infty}\brk{T_{2}-\tab,T_{2}:L_{x}^{3}}}^{4\ep\brk{1-3\de}} + \<\tab\>^{3-6\ep} \normo{U}_{L_{t}^{\infty}\brk{T_{2}-\tab,T_{2}:L_{x}^{3}}}^{6\ep\brk{1-3\de}}} \\
	\loe & C(E)\brk{\<\tab\>^{2-4\ep} \epa^{4\ep\brk{1-3\de}} + \<\tab\>^{3-6\ep} \epa^{6\ep\brk{1-3\de}}} \\
	\loe & C(E) \epa^{\half \ep}.
	\end{align*}
	
	Above all, we have
	\begin{equation*}
	\normo{U}_{\WSN\brk{T_{2},+\infty}} \loe C(E)\brk{\epa + 2^{-\beta}\normo{U}_{\WSN\brk{T_{2},+\infty}} + \epa^{3\ep^2/2} + C(\beta)\normo{U}_{\WSN\brk{T_{2},+\infty}}^{2}+ \normo{U}_{\WSN\brk{T_{2},+\infty}}^{3}}.
	\end{equation*}
	Take a large $\beta = \be(E)>0$, such that
	$$C(E)2^{-\be} < \half.$$
	A standard bootstrap argument yields that for some $T_2 = T_2(\epa)$, $\normo{U}_{\WSN\brk{T_{2},+\infty}} \loe C(E) \epa^{3\ep^2/2}.$ Thus, we have  $\normo{U}_{\WSN\brk{0,+\infty}} \loe C(E)$.

	Finally, we can prove the large data scattering. We need to show that when $t \ra \pm\I$, $e^{-it\jb{D}}U(t)$ has limit in $H^1$. Note that 
	\EQ{
	K(-t)U(t) = &U_{0} + i\<D\>^{-1}\Om(U,U)\brk{0}-iK(-t)\<D\>^{-1}\Om(U,U) \\
	& -i\int_{0}^{t} K(-s) \<D\>^{-1}\Tres(U,U) \ds \\
	& -2i\int_{0}^{t} K(-s) \<D\>^{-1}\brk{\Om\brk{-i\<D\>^{-1}U^{2},U}} \ds.
	}
	By Strichartz estimates, we have
	\EQ{
	\normo{\int_{t_1}^{t_2} K(-s) \<D\>^{-1}\Tres(U,U) \ds}_{H^1} \lsm  \normo{U}_{\WSN\brk{t_1,t_2}}^2
	}
	and
	\EQ{
	\normo{\int_{t_1}^{t_2} K(-s) \<D\>^{-1}\brk{\Om\brk{-i\<D\>^{-1}U^{2},U}} \ds }_{H^1}
	\lsm  \normo{U}_{\WSN\brk{t_1,t_2}}^3.
	}
	By the boundedness of $\WSN$ norm, the integral term in $K(-t)U$ has limit in $H^1$. For the scattering, it suffices to prove that $\normo{K(-t)\<D\>^{-1}\Om(U,U)}_{H^1}$ tends to $0$, when $t\ra \pm\I$. We know that $P_{\loe 0}U \in \brk{1/3,1/6,-13/21}_{\bbr}$ from the boundedness of $\WSN$, then $\normo{P_{\loe 0}U}_{L_t^3\brk{\bbr:L_x^6}}\lsm 1$. Since $U(t)$ is bounded in $H^1$, we also have that
	\EQ{
	\normo{P_{\loe 0}U(t_1)-P_{\loe 0}U(t_2)}_{L_x^6} \lsm & \normo{P_{\loe 0}U(t_1)-P_{\loe 0}U(t_2)}_{L_x^2} \\
	\lsm & |t_1-t_2| \sup_{[t_1,t_2]} \normo{P_{\loe 0}\pdt U(t)}_{L_x^2} \\
	\lsm & |t_1-t_2| \sup_{[t_1,t_2]} \normo{P_{\loe 0}\brk{\jb{D}U + \jb{D}^{-1}\brk{U^2}}}_{L_x^2} \lsm |t_1-t_2|, \\
	}
	which implies that $\normo{P_{\loe 0}U(t)}_{L_x^6}$ is Lipschitz continuous in $t$. Thus, we obtain that 
	\EQ{
	\lim_{t\ra\pm\I} \normo{P_{\loe 0}U(t)}_{L_x^6} =0.
	}
	By the boundedness of Coifman-Meyer bilinear operator,
	\EQ{
	\normo{\Om(U,U)}_{H^1} \lsm \normo{P_{\loe 0} U}_{L^2} \normo{P_{\loe 0} U}_{L^6}.
	}
	Therefore, we have that $\lim_{t\ra\pm\I}\normo{\Om(U,U)}_{H^1} = 0$,  which completes the proof of Theorem \ref{thm-large}.

\section*{Acknowledgements}

The authors are grateful to Professor Baoxiang Wang and Professor Kenji Nakanishi for helpful discussions. Z. Guo is partially supported by ARC DP170101060. J. Shen was supported by China Scholarship Council.

\end{document}